\DeclareMathAlphabet{\mathpzc}{OT1}{pzc}{m}{it}
\newtheorem{theorem}{Theorem}[section]
\newtheorem{proposition}[theorem]{Proposition}
\newtheorem{definition}[theorem]{Definition}
\newtheorem{problem}[theorem]{Problem}
\newtheorem{remark}[theorem]{Remark}
\newtheorem{lemma}[theorem]{Lemma}
\newtheorem{example}[theorem]{Example}
\newcommand{\aA}{{\mathcal A}}
\newcommand{\shi}[1]{{\sf Shi} (#1)}
\newcommand{\cT}{{\mathcal T}}
\newcommand{\ccT}[1]{{\mathcal T}_{#1}}
\newcommand{\x}{{\sf x}}
\newcommand{\av}[2]{ { Av_{#1}}(#2)}
\newcommand{\s}[1]{\scalebox{0.6}{#1}}
\newcommand{\RR}{{\mathbb R}}
\newcommand{\NN}{{\mathbb N}}
\newcommand{\fs}{\footnotesize}
\newcommand{\two}[1]{{\color{cyan} #1}}
\newcommand{\shiT}[1]{{\mathcal T}_{#1}}
\newcommand{\pa}{\pi}
\newcommand{\Dp}[1]{{\mathcal D}_{#1}}
\newcommand{\de}[2]{{d_{#1,#2}}}
\newcommand{\tnk}[2]{\mathcal{H}(#1,#2)}
\newcommand{\U}{{ U}}
\newcommand{\D}{{ D}}
\newcommand{\ndown}{\#\D}
\newcommand{\nup}{\#\U}
\newcommand{\col}[1]{C_{#1}}
\newcommand{\row}[1]{R_{#1}}
\newcommand{\ucov}[2]{{\mathcal UC}_{#1}(#2)}
\newcommand{\ucovv}[2]{\overline{\mathcal UC}_{#1}(#2)}
\newcommand{\lcov}[2]{{\mathcal L\mathcal C}_{#1}(#2)}
\newcommand{\reqnomode}{\tagsleft@true}
\newcommand{\leqnomode}{\tagsleft@false}
\newcommand{\dP}[1]{\mathscr{D}_{#1}}
\definecolor{azu}{rgb}{0.0, 0.5, 1.0}
\newcommand{\tf}[1]{T^{#1}_{
    \begin{tikzpicture}[scale=0.09]
    \draw[fill=black] (0,0) circle (12pt);
    \draw[fill=black] (0,1) circle (12pt);
    \draw[fill=black] (1,1) circle (12pt);
    \end{tikzpicture}}}
\newcommand{\te}[1]{ T^{#1}_{
    \begin{tikzpicture}[scale=0.09]
    \draw (0,0) circle (12pt);
    \draw (0,1) circle (12pt);
    \draw (1,1) circle (12pt);
    \end{tikzpicture}}}
\newcommand{\tv}[1]{ T^{#1}_{
    \begin{tikzpicture}[scale=0.09]
    \draw[fill=black] (0,0) circle (12pt);
    \draw[fill=black] (0,1) circle (12pt);
    \draw (1,1) circle (12pt);
    \end{tikzpicture}}}
\newcommand{\tor}[1]{ T^{#1}_{
    \begin{tikzpicture}[scale=0.09]
    \draw (0,0) circle (12pt);
    \draw[fill=black] (0,1) circle (12pt);
    \draw[fill=black] (1,1) circle (12pt);
    \end{tikzpicture}}}
\newcommand{\tg}[1]{ T^{#1}_{
    \begin{tikzpicture}[scale=0.09]
    \draw (0,0) circle (12pt);
    \draw (1,1) circle (12pt);
    \draw[fill=black] (0,1) circle (12pt);
    \end{tikzpicture}}}
\NewDocumentCommand\refi{m}{\eqref{Item:#1}}
\newcommand{\oeis}[1]{\href{http://oeis.org/#1}{#1}}
\newcommand*{\tikzarrow}[2]{%
  \tikz[
  baseline=(A.base),             % Set baseline to the baseline of node content
  font=\footnotesize\sffamily    % Set fontsize of the node content
  ]
  \node[
  single arrow,                  % Shape of the node
  single arrow head extend=5pt,  % Actual width of arrow head
%  draw,                          % Draw the node shape
  inner sep=1.5pt,                 % Separation between node content and node 
  %shape
  top color=white,               % Shading color on top of node
  bottom color=#1,               % Shading color on bottom of node
  drop shadow                    % Draw a shadow
  ] (A) {#2};%
}
\title{Patterns in Shi tableaux and Dyck paths}
\keywords{patterns, Dyck paths, Shi tableau}
\author{Myrto Kallipoliti}
\address{Faculty of Mathematics, University of Vienna, Vienna, Austria}
\email{myrto.kallipoliti@univie.ac.at}
\author{Robin Sulzgruber}
\address{Deptartment of Mathematics and Statistics, York University, Toronto, Canada}
\email{rsulzg@yorku.ca}
\author{Eleni Tzanaki}
\address{Department of Mathematics and Applied Mathematics, University of Crete, Heraklion, Greece}
\email{etzanaki@uoc.gr}
\author{Myrto Kallipoliti}
\author{Robin Sulzgruber}
\author{Eleni Tzanaki}
\begin{document}
 \maketitle 
\begin{center}
%{Myrto Kallipoliti}, {Robin Sulzgruber}$^1$ and {Eleni Tzanaki}$^2$
{Myrto Kallipoliti}, {Robin Sulzgruber} and {Eleni Tzanaki}
%$^1$ \\
%$^2$ Department of Mathematics, University of Crete, Greece
\end{center}

\begin{abstract}
  Shi tableaux are special binary fillings of certain  Young diagrams
  which arise in the study of Shi hyperplane arrangements related to 
   classical root systems. For type $A$, the set $\shiT{}$
   of Shi tableaux naturally coincides  with the set of Dyck paths, 
    for which various notions of patterns have been introduced and studied over  the years.      In this paper we define  a notion of pattern occurrence in $\shiT{}$
   which, although it can be regarded as
   a pattern on Dyck paths, it  is motivated by the 
    underlying  geometric structure of the tableaux. 
  Our main goal in this work is to  study the poset of Shi tableaux defined by 
   pattern-containment. More precisely,  
  we determine explicit formulas for upper and lower covers for each 
  $T\in\shiT{}$, we consider pattern avoidance for 
  the  smallest non-trivial tableaux (size 2) and generalize these results to     certain tableau of larger size.  
  We conclude with open problems and possible future directions. 
  \end{abstract}

\section{Introduction}

The investigation of patterns in families of discrete objects is an active
topic in Combinatorics, with connections to various areas in Mathematics, 
as well as  other fields such as Physics, Biology,
Sociology and Computer science
\cite{pp-phylogeny,kitaev-book-11,lackner-17}.
Generally speaking, the notion of pattern occurrence or pattern avoidance can be 
described as the presence or, respectively, absence of a substructure inside a larger 
structure.
Patterns were first considered for permutations: an occurrence of a pattern $\sigma$
in a permutation $\pi$ is a subword of $\pi$ whose letters appear in the same relative
order  as those in $\sigma$.
For instance, the permutation $132$ occurs as a pattern in $32514$ since the 
subsequence $254$ (among others) is ordered in the same way as $132$ (see 
Figure~\ref{fig:perm} for an illustration using  matrix representations).
\begin{figure}[h!]
	\begin{center}
		\begin{tikzpicture}[scale=0.4]
		\begin{scope}[xshift=-10cm]
		\draw[step=1.0,black,thin] (1,-3) grid (6,2);
		\node at(3.5,3){$32514$};
		\node at(0.5,1.5){\fs$\tt5$};
		\node at(3.5,1.5){$\bullet$};
		\node at(0.5,0.5){\fs$\tt4$};
		\node at(4.5,-2.5){$\bullet$};
		\node at(0.5,-0.5){\fs$\tt3$};
		\node at(1.5,-0.5){$\bullet$};
		\node at(0.5,-1.5){\fs$\tt2$};
		\node at(2.5,-1.5){$\bullet$};
		\node at(0.5,-2.5){\fs$\tt1$};
		\node at(1.5,-3.5){\fs$\tt 1$};
		\node at(2.5,-3.5){\fs$\tt 2$};
		\node at(3.5,-3.5){\fs$\tt 3$};
		\node at(4.5,-3.5){\fs$\tt 4$};
		\node at(5.5,-3.5){\fs$\tt 5$};
		\node at(5.5,0.5){$\bullet$};
		\node at(-1.2,0){$\pi:$};
		\draw[->,line width=1pt,color=black,opacity=0.5] (7,0) to[bend left,out=20](9,0); 
		
		\end{scope}
		
		\begin{scope}[xshift=0cm]
		\draw[lightgray](1,-3) grid (2,2);
		\draw[lightgray](2.01,-3) grid (6,-2);
		\draw[lightgray](2.01,-1) grid (6,0);
		\draw[lightgray](4,1) -- (5,1);
		\draw[lightgray](4,2) -- (5,2);
		\draw[thick](2,-2) grid (4,-1);
		\draw[thick](2,0) grid (4,2);
		\draw[thick](5,0) grid (6,2);
		\draw[thick](5,-2) grid (6,-1);
		
		\node at(0.5,1.5){\fs$\tt5$};
		\node at(0.5,0.5){\fs$\tt4$};
		\node at(0.5,-0.5){\fs$\tt3$};
		\node at(0.5,-1.5){\fs$\tt2$};
		\node at(0.5,-2.5){\fs$\tt1$};
		\node at(1.5,-3.5){\fs$\tt 1$};
		\node at(2.5,-3.5){\fs$\tt 2$};
		\node at(3.5,-3.5){\fs$\tt 3$};
		\node at(4.5,-3.5){\fs$\tt 4$};
		\node at(5.5,-3.5){\fs$\tt 5$};
		
		\node[lightgray] at(1.5,-0.5){$\bullet$};
		\node[lightgray] at(4.5,-2.5){$\bullet$};
		\node at(3.5,1.5){$\bullet$};
		\node at(3.5,3){$254$};
		\node at(2.5,-1.5){$\bullet$};
		\node at(5.5,0.5){$\bullet$};
		%      \node at(-1.2,0){$\to$};
		%      \node at(7.2,0){$\to$};
		\draw[->,line width=1pt,color=black,opacity=0.5] (7,0) to[bend left,out=20](9,0); 
		\end{scope}
		
		\begin{scope}[xshift=12cm, yshift=2cm]
		\draw[step=1.0,black,thin] (1,-3) grid (4,0);
		\node at(2.5,1){$132$};
		\node at(0.5,-0.5){\fs$\tt 3$};
		\node at(1.5,-2.5){$\bullet$};
		\node at(0.5,-1.5){\fs $\tt 2$};
		\node at(2.5,-0.5){$\bullet$};
		\node at(0.5,-2.5){\fs $\tt 1$};
		\node at(3.5,-1.5){$\bullet$};
		\node at(1.5,-3.5){\fs $\tt 1$};
		\node at(2.5,-3.5){\fs $\tt 2$};
		\node at(3.5,-3.5){\fs $\tt3$};
		\node at(-1.2,-2){$\sigma:$};
		\end{scope}
		\end{tikzpicture}
		\caption{An occurrence of the pattern $\sigma=132$ in $\pi=32514$.  $\sigma$ is
			obtained by deleting the first and forth column, as well as the third and fifth row of
			$\pi$.}
		\label{fig:perm}
	\end{center}
\end{figure}
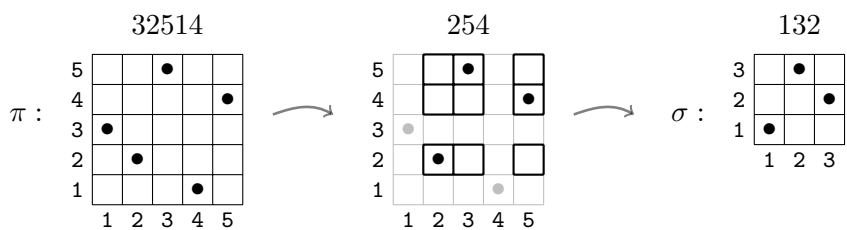
%For an overview of permutation patterns, see \cite{combperm-bona-04}.
%
The systematic investigation of patterns in permutations and more generally in words began in the 70s with the work of Knuth on sorting permutations using data structures \cite{knuth-art-I}, and later with the work of Simion and Schmidt \cite{restrpermut-85}.
In the last decade the study of patterns in permutations and words has grown 
explosively  (see \cite{combperm-bona-04, kitaev-book-11})
and has been extended  in the context of various 
other structures  such as set partitions, trees, lattice paths,
fillings of Young diagrams, just to name a few. 
%%Permutations were the first mathematical objects to serve as formal %models for studying arrangements of DNA fragments among several species.
%Here we mention patterns structures which are relevant to the proposed research.
For instance, in \cite{spiridonov-fpsac} Spiridonov considered a notion of pattern
occurrence for binary fillings of grid shapes, which  naturally generalizes
permutation pattern occurrence as follows: an  occurrence of a pattern in a filling of
a grid shape $T$ is a filling of a sub-shape (or {\em minor shape}) $S$ of $T$, 
obtained by removing some rows and columns of $T$ (see Figure~\ref{fig:grid_pattern} 
for two such examples).
\begin{figure}[h!]
  \begin{tikzpicture}[scale=0.45]
	\begin{scope}[xshift=0cm,scale=0.8]
		\begin{scope}[xshift=-9cm,yshift=-1cm]
			\draw(2,0) grid (4,1);
			\draw(1,1) grid (5,2);
			\draw(0,2) grid (1,3);
			\draw(3,2) grid (4,3);
			\node at(2.5,0.5){$\circ$};
			\node at(3.5,0.5){$\circ$};
			\node at(1.5,1.5){$\bullet$};
			\node at(2.5,1.5){$\circ$};
			\node at(3.5,1.5){$\bullet$};
			\node at(4.5,1.5){$\circ$};
			\node at(0.5,2.5){$\circ$};
			\node at(3.5,2.5){$\bullet$};
			\node at(0,1){$T$};
			\draw[->,line width=1pt,color=black,opacity=0.5] (5.5,2) to[bend left,out=20]   
			(7.5,2);     
			%  \tikzarrow{(0,0)}{\;\;\;}
		\end{scope}
		
		\begin{scope}[xshift=-1cm,yshift=-1cm]
			\draw[lightgray] (2,0) grid (4,1);
			\draw[lightgray] (1,1) grid (2,2);
			\draw[lightgray] (4,1) grid (5,2);
			\draw[thick] (2,1) grid (4,2);
			\draw[thick] (0,2) grid (1,3);
			\draw[thick] (3,2) grid (4,3);
			\node[lightgray] at(2.5,0.5){$\circ$};
			\node[lightgray] at(3.5,0.5){$\circ$};
			\node[lightgray] at(1.5,1.5){$\bullet$};
			\node at(2.5,1.5){$\circ$};
			\node at(3.5,1.5){$\bullet$};
			\node[lightgray] at(4.5,1.5){$\circ$};
			\node at(0.5,2.5){$\circ$};
			\node at(3.5,2.5){$\bullet$};
			\draw[->,line width=1pt,color=black,opacity=0.5] (5.5,2) to[bend left,out=20](7.5,2); 
			% \node at (0,-2.2){\fs (a)}; 
		\end{scope}
		
		\begin{scope}[xshift=6cm,yshift=0cm]
			\draw[step=1.0,black,thin] (2,0) grid (4,1);
			\draw[step=1.0,black,thin] (1,1) grid (2,2);
			\draw[step=1.0,black,thin] (3,1) grid (4,2);
			\node at(2.5,0.5){$\circ$};
			\node at(3.5,0.5){$\bullet$};
			\node at(1.5,1.5){$\circ$};
			\node at(3.5,1.5){$\bullet$};
			\node at(2,-1){$S$};
		\end{scope}
	\end{scope}
	\begin{scope}[xshift=13cm,scale=0.8]
		%  \begin{tikzpicture}[scale=0.45]
		\begin{scope}[xshift=0cm,yshift=2cm]
			\draw (0,0) rectangle (4,1);
			\draw (0,-1) rectangle (3,1);
			\draw (0,-2) rectangle (2,1);
			\draw (0,-3) rectangle (1,1);
			
			\fill(0.5,-2.5)circle (6pt);  
			\fill(0.5,-1.5)circle (6pt);  
			\draw(1.5,-1.5)circle (6pt);  
			\fill(0.5,-.5)circle (6pt);  
			\fill(1.5,-.5)circle (6pt);  
			\draw(2.5,-.5)circle (6pt);  
			\fill(0.5,.5)circle (6pt);  
			\fill(1.5,.5)circle (6pt);  
			\fill(2.5,.5)circle (6pt);  
			\draw(3.5,.5)circle (6pt);  
			
			\node at(-1.2,-1){$T:$};
			\draw[->,line width=1pt,color=black,opacity=0.5] (4,-1) to[bend left,out=20] 
			(5.5,-1);  
		\end{scope}
		
		\begin{scope}[xshift=7cm,yshift=0]
			\draw[lightgray,thin] (-1,-1) grid (0,3);
			\draw[lightgray,thin] (0,1) grid (2,2);
			\draw[lightgray,thin] (2,2) grid (3,3);
			\draw[thick](0,0) grid (1,1);
			\draw[thick](0,2) grid (2,3);
			\fill[color=lightgray] (-0.5,-0.5)circle (6pt);
			\fill[color=lightgray] (-0.5,.5)circle (6pt);
			\fill[color=lightgray] (-0.5,1.5)circle (6pt);
			\fill[color=lightgray] (0.5,1.5)circle (6pt);
			\draw[color=lightgray] (1.5,1.5)circle (6pt);
			\draw[color=lightgray] (2.5,2.5)circle (6pt);
			\fill[color=lightgray] (-0.5,2.5)circle (6pt);
			\fill(0.5,2.5)circle (6pt);
			\draw(0.5,.5)circle (6pt);
			\fill(1.5,2.5)circle (6pt);
			\draw[->,line width=1pt,color=black,opacity=0.5] (3,1) to[bend left,out=20] (4.5,1); 
			%\node at (0,-2.2){\fs (b)}; 
			%  \tikzarrow{(-1,1)}{\;\;\;}
		\end{scope}
		
		\begin{scope}[xshift=12cm,yshift=0cm]
			\draw[step=1.0,black,thin] (0,0) grid (1,1);
			\draw[step=1.0,black,thin] (0,1) grid (2,2);
			\draw[color=black] (0.5,0.5)circle (6pt);
			\fill[color=black] (0.5,1.5)circle (6pt);
			\fill[color=black] (1.5,1.5)circle (6pt);
			\node at(3,0.8){$:S$};
		\end{scope}
	\end{scope}
\end{tikzpicture}
	\caption{Two examples of pattern occurrence in binary fillings of grid shapes
		in the sense of \cite{spiridonov-fpsac}. 
		In both cases, $S$ is obtained by deleting  two columns and a row of  $T$.}
	\label{fig:grid_pattern}
\end{figure}
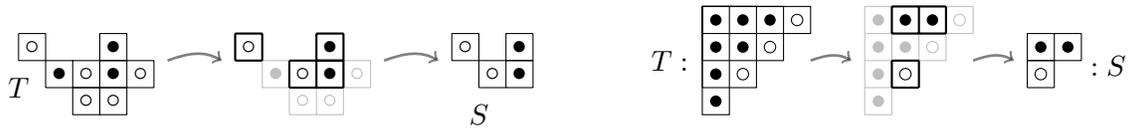
In a totally different context,  the authors in    \cite{dyckpp-14} and \cite{dmtcs:2334}
 considered a notion of pattern occurrence for Dyck paths,\, i.e.,  paths on the discrete plane from
$(0,0)$ to $(2n,0)$, $n\in\mathbb{N}$, consisting of up-steps $\U\!\!:\!(1,1)$ 
and down-steps $\D\!\!:\!(1,-1)$ never going below the $x$-axis. The pattern 
occurrence is defined by 
deleting   $U$ and $D$-steps so that the resulting path is again a Dyck path 
(with fewer steps). See  Figure~\ref{fig:dyckpatt} for an example of a Dyck 
path and a 
pattern occurrence.

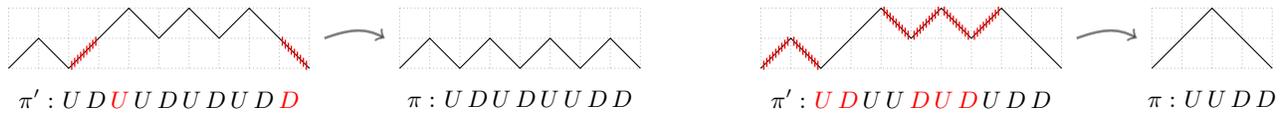
\begin{figure}
\begin{tikzpicture}[scale=0.5]
	\begin{scope}[scale=0.8, xshift=0cm,yshift=0cm]
		\draw[step=1,lightgray,thin,densely dotted] (0,0) grid (10,2);
		\draw (0,0)--(1,1)-- (2,0)--(3,1)--(4,2)--(5,1)--(6,2)--(7,1)--(8,2)--(10,0);
		\node at (5,-1){\fs $\pa':$ 
			$\U\,\D\,{\color{red}\U}\,\U\,\D\,\U\,\D\,\U\,\D\,{\color{red}\D}$};
		
		\foreach \i in {0.1,0.2,0.3,0.4,0.5,0.6,0.7,0.8,0.9}
		{\draw[red,line width=0.5pt] ($(0,-0.15)+(\i+2,\i)$)--($(0,0.15)+(\i+2,\i)$);}
		\foreach \i in {0.1,0.2,0.3,0.4,0.5,0.6,0.7,0.8,0.9}
		{\draw[red,line width=0.5pt] ($(0,-0.15)+(\i+9,1-\i)$)--($(0,0.15)+(\i+9,1-\i)$);}
		\draw[->,line width=1pt,color=black,opacity=0.5] (10.5,1) to[bend left,out=20] 
		(12.5,1);
	\end{scope}
	\begin{scope}[scale=0.8, xshift=13cm,yshift=0cm]
		\draw[step=1,lightgray,thin,densely dotted] (0,0) grid (8,2);
		\draw (0,0)--(1,1)--(2,0)--(3,1)--(4,0)--(5,1)--(6,0)--(7,1)--(8,0);
		\node at (4,-1){\fs $\pa:$  $\U\,\D\,\U\,\D\,\U\,\U\,\D\,\D$};
		
	\end{scope}
	\begin{scope}[scale=0.8, xshift=25cm,yshift=0cm]
		\draw[step=1,lightgray,thin,densely dotted] (0,0) grid (10,2);
		\draw (0,0)--(1,1)-- (2,0)--(3,1)--(4,2)--(5,1)--(6,2)--(7,1)--(8,2)--(10,0);
		%\draw[<->,gray](0,-0.4)--(10,-0.4);
		%\node at (5,-1){\tiny\color{gray} semilength 5};
		\node at (5,-1){\fs $\pa':$  ${\color{red} 
				\U\,\D}\,\U\,\U\,{\color{red}\D\,\U\,\D}\,\U\,\D\,\D$};
		\foreach \i in {0.1,0.2,0.3,0.4,0.5,0.6,0.7,0.8,0.9}
		{\draw[red,line width=0.5pt] ($(0,-0.15)+(\i+5,\i+1)$)--($(0,0.15)+(\i+5,\i+1)$);}
		\foreach \i in {0.1,0.2,0.3,0.4,0.5,0.6,0.7,0.8,0.9}
		{\draw[red,line width=0.5pt] ($(0,-0.15)+(\i+6,2-\i)$)--($(0,0.15)+(\i+6,2-\i)$);}
		\draw[->,line width=1pt,color=black,opacity=0.5](10.5,1)to[bend left,out=20]
		(12.5,1);
		
		\foreach \i in {0.1,0.2,0.3,0.4,0.5,0.6,0.7,0.8,0.9}
		{\draw[red,line width=0.5pt] ($(-4,-0.15)+(\i+4,\i)$)--($(-4,0.15)+(\i+4,\i)$);}
		\foreach \i in {0.1,0.2,0.3,0.4,0.5,0.6,0.7,0.8,0.9}
		{\draw[red,line width=0.5pt] ($(7,-0.15)+(\i,\i+1)$)--($(7,0.15)+(\i,\i+1)$);}
		
		\foreach \i in {0.1,0.2,0.3,0.4,0.5,0.6,0.7,0.8,0.9}
		{\draw[red,line width=0.5pt] ($(1,-0.15)+(\i,1-\i)$)--($(1,0.15)+(\i,1-\i)$);}
		\foreach \i in {0.1,0.2,0.3,0.4,0.5,0.6,0.7,0.8,0.9}
		{\draw[red,line width=0.5pt] ($(4,-0.15)+(\i,2-\i)$)--($(4,0.15)+(\i,2-\i)$);}

	\end{scope}
	\begin{scope}[scale=0.8, xshift=38cm,yshift=0cm]
		\draw[step=1,lightgray,thin,densely dotted] (0,0) grid (4,2);
		\draw (0,0)--(2,2)-- (4,0);
		%\draw[<->,gray](0,-0.4)--(8,-0.4);
		%\node at (4,-1){\tiny\color{gray} semilength 4};
		\node at (2,-1){\fs $\pa:$ $\U\,\U\,\D\,\D$};
	\end{scope}
	%
	%\node at (4,-2){\fs  $U\,D\,U\,D\,U\,D\,U\,D$};
	%\end{scope}
\end{tikzpicture}
	\caption{Two instances of pattern occurrence in the context of  \cite{dyckpp-14}. 
		In both examples, $\pa$ is a shorter  Dyck path obtained from the original 
		path 
		$\pa'$ by  deleting  the  steps  in red. }
	\label{fig:dyckpatt}
\end{figure}

%
%\dis{The importance of studying patterns in various families of discrete 
%objects stems from
%the fact that patterns appear not only in Mathematics but in other fields as 
%well,
%including Physics, Biology, Sociology and Computer science, and they have many
%applications. For instance, the study of permutation and word patterns arises 
%in the
%investigation of the genome evolution, a research direction in Computational 
%Biology,
%as well as in Statistical Mechanics, which is a branch of Theoretical Physics 
%(see for
%example \cite[Chapters~2 and ~3]{kitaev-book-11}).
%They have also been successfully used to detect phylogenetic relationships 
%between computer viruses \cite{pp-phylogeny,virology-05}
%and are linked to the single-peaked restriction, an extensively studied domain 
%restriction in Social Choice Theory \cite{lackner-17}.
%The study of patterns in trees arises
%in Theoretical Computer Science: Finding interesting tree patterns hidden in 
%large datasets is a central topic in Data Mining.
%Moreover, tree patterns are used in some programming languages, like 
%Mathematica, as a general tool to process data based on its structure.
%}

Inspired by \cite{dyckpp-14} and \cite{spiridonov-fpsac}, 
we introduce a notion of patterns for the so called {\em Shi tableaux}, which 
are  structures   defined in \cite{FVT13}  to encode  dominant regions of the 
($m$-extended) Shi arrangement. 
For $m=1$  Shi  tableaux can easily  be  described as binary fillings 
of certain Young diagrams. 
More precisely, a {\em Shi tableau} of size $n$ is a 
binary filling with $\bullet$ or $\circ$ 
of a staircase Young diagram of shape $(n,n-1,\ldots,1)$, satisfying  the 
property 
that if a cell contains $\bullet$  then all cells above and to its left 
contain $\bullet$. It is straightforward to see
(cf. Figure~\ref{fig:dyckshi}) 
that Shi tableaux of size $n$ biject   to Dyck paths of semilength $n+1$.

\begin{figure}[h!]
	\begin{center}
  \begin{tikzpicture}[scale=0.34]
	\begin{scope}[xshift=0cm,yshift=4cm]
		\draw (0,0)rectangle (4,1);
		\draw (0,-1)rectangle (3,1);
		\draw(0,-2)rectangle(2,1);
		\draw(0,-4)--(0,-3);
		\draw(4,1)--(5,1);
		\draw(0,-3)  rectangle (1,1);
		\draw [line width=0.9pt, rounded corners=0.2] 
		(0,-4)--(0,-3)--(1,-3)--(1,-1)--(2,-1)--(2,0)--(3,0)--(3,1)--(5,1);
		\fill[color=black] (0.5,0.5) circle (7pt);
		\fill[color=black](1.5,0.5) circle (7pt);
		\fill[color=black](2.5,0.5) circle (7pt);
		\draw[color=black](3.5,0.5) circle (7pt);
		\draw[color=black](2.5,-0.5) circle (7pt);
		\draw[color=black](1.5,-1.5) circle (7pt);
		\fill[color=black](0.5,-0.5) circle (7pt);
		\fill[color=black](1.5,-0.5) circle (7pt);
		\fill[color=black](0.5,-1.5) circle (7pt);
		\fill[color=black](0.5,-2.5) circle (7pt);
	\end{scope}
	
	\begin{scope}[scale=0.8, xshift=14cm,yshift=1.2cm]
		\draw[step=1,lightgray,thin,densely dotted] (0,0) grid (10,5);
		\draw[line width=0.7pt, rounded corners=0.2] (0,0) -- (1,1) -- (2,0) -- (3,1) -- 
		(4,2) -- (5,1) -- (6,2) -- (7,1) -- (8,2)-- (10,0);
		%      \node at(-1.4,1.7){$p_T:$};
		{\foreach \i in {0,1,2,3} \fill[color=black, opacity=1] (2+\i,1+\i)circle 
			(10pt);}
		\fill[color=black, opacity=1] (5,2)circle (10pt);
		\fill[color=black, opacity=1] (6,3)circle (10pt);
		\fill[color=black, opacity=1] (7,2)circle (10pt);
	\end{scope}
\end{tikzpicture}
		\caption{
			A Shi tableau $T\in \shiT{4}$ and the corresponding Dyck path $\pa\in\dP{5}$.}
		\label{fig:dyckshi}
	\end{center}
\end{figure}

In accordance with  Spiridonov's definition \cite{spiridonov-fpsac}, the notion of 
pattern occurrence in Shi tableaux can be defined by deleting columns and rows.
However, in our pattern we impose a stronger condition; we allow deletions 
of rows and columns after which the underlying Young diagram is again a 
staircase Young diagram (see Section~\ref{sec:shi} for the precise definition). 
%, 
Part of the motivation of this work
lies in \cite[\textsection6]{dyckpp-14}, where Bacher et al. ask whether it is possible to
transport the pattern order on Dyck paths along some of the bijections between Dyck
paths and other members of the Catalan family in order to obtain interesting order
structures on different combinatorial objects. %In \cite{nonctrees-12},  for instance,
%Dairyko et al. study patterns on binary trees. % See also \cite{ger-Wilf-Catalan}.
So far, our intuition to work on this subject has been affirmed by the enumerative
results presented here, which hint at interesting connections to the theory of pattern
avoidance for permutations.
This is perhaps not surprising in view of the fact that one of the original 
problems of this area, the enumeration of $312$-avoiding permutations, is also 
related to Dyck paths.
Furthermore, there are ties to algebraic objects that arise in connection to 
crystallographic root systems.
For example, $ad$-nilpotent ideals of a Borel subalgebra of the complex simple Lie
algebra of type $A$ with a bounded class of nilpotence,  studied in
\cite{akop-adnil-02}, can be described using pattern avoidance for Shi tableaux.
%It is natural therefore to ask whether analogous results can be obtained in 
%other types as well (further work).

This paper is organized as follows.
In Section~\ref{sec:prel} the basic definitions and notation, including 
our definition of pattern occurrence in Shi tableaux, 
are provided. 
%More precisely, the definition of pattern occurrence in Shi 
%tableaux (of type $A$), which is introduced in this paper, is explained in 
%Section~\ref{sec:shi}. 
In Section~\ref{sec:ulbounds} we give explicit formulas for the number of 
upper and lower covers for each Shi tableau, in the poset defined by 
pattern-containment. 
In Section~\ref{sec:avoid} we give precise characterizations of pattern 
avoidance for each  Shi tableaux of size $2$ and generalize these results 
for certain tableaux of larger size. 
In the process we encounter $ad$-nilpotent ideals and the bijection on Dyck paths known as the zeta map. 
We complete the paper with a short discussion of open problems in Section~\ref{sec:problems} including possible connections to permutations avoiding a pair of patterns.

\section{Preliminaries}
\label{sec:prel}

\subsection{Dyck paths}
\label{ssec:Dpaths}
A \emph{Dyck path} of \emph{semilength} $n$
is a path on the plane, from $(0,0)$ to $(2n,0)$, consisting of up-steps $(1,1)$ and
down-steps $(1,-1)$ that never go below the $x$-axis.
It is  well-known that the cardinality of the set  $\dP{n}$  of 
Dyck paths of semilength $n$,  is given by the Catalan number 
$\frac{1}{n+1}\binom{2n}{n}$.
Replacing each up-step of a Dyck path $\pa\in\dP{n}$ with the letter $\U$ and 
each down-step with the letter $\D$, $\pa$ can also be written as a word with 
$2n$ letters on the alphabet $\{\U,\D\}$. Clearly, a word $w\in \{\U,\D\}^n$ 
corresponds to a Dyck  path if and only if  each  initial subword of $w$ 
contains at least as many letters $\U$ as letters $\D$. 
%\gr{
%The latter, which is equivalent to the fact that the path
%remains above the $x$-axis, will be referred to as  the {\em Dyck property}.}
Another convenient way to encode paths in $\dP{n}$ is in terms of  standard 
Young tableaux of size $2\times{n}$, i.e., arrangements of the numbers 
$1,2,\ldots,2n$  in a $2\times{n}$ rectangle so that each row and each column 
is increasing.  The correspondence to Dyck paths  is as follows: in the top (resp. bottom) row of the $2\times{n}$  tableau we  register in increasing order the positions of the $\U$-steps  (resp. $\D$-steps)  of the Dyck  path  (see Figure 
\ref{fig:deletions}). Notice that since each Dyck path  begins with an up-step, the top-left cell of the $2\times n$ standard Young tableau is always 1.

The \emph{height} of a Dyck path is the highest $y$-coordinate attained in the path. A
\emph{return step} is a downstep that returns the path to the ground level.
Let $\tnk{n}{k}$ denote the set of Dyck paths of semilength $n$ and height at most $k$.
It is not hard to see that $|\tnk{n+1}{k}|=\sum_{i=0}^n |\tnk{i}{k}||\tnk{n-i}{k-1}|$,
with initial conditions $|\tnk{0}{k}|=1$ for all $k\geq 0$, and $|\tnk{n}{0}|=0$ for all
$n>0$ (see \cite{knuth-plantedtrees-72}).

The \emph{bounce path} $b(\pa)$ of a Dyck path $\pa$ is  described by
the following  algorithm:  starting at $(0,0)$ we travel along the up-steps of
$\pa$ until we encounter the  beginning of a down-step. Then, we turn and travel down until we hit the $x$-axis. Then, we travel up until
we again encounter the beginning of a down-step of $\pa$, we then turn down and travel
to the $x$-axis, etc. We continue in this way until we arrive at $(2n,0)$. 
%\gr{ We denote by $\beta(\pa)$ the number of return points of the bounce path
%$\bou{\pa}$ of $\pa$ (so  always $\beta(\pa)\geq 1$).}
For instance, if $\pa=\U\D\U\U\D\U\D\U\D\D$, then $b(\pa)=\U\D\U\U\D\D\U\U\D\D$.
A {\em peak at height $k$} of a Dyck path $\pa$ is a point $(x_0,k)$ of $\pa$ 
which is
immediately preceded by an up-step and immediately succeeded by a down-step.
Similarly a {\em valley at height $k$} of $\pa$ is a point $(x_0,k)\in\pa$ that is immediately preceded by a down-step and immediately followed by an up-step.
Viewing the path $\pa$ as a word, a peak  is an occurrence of a $\U\D$ and
its height  is the number of $\U$'s minus the number of 
$\D$'s that precede the  peak. A valley 
is an occurrence of $\D\U$ and its height is defined analogously. 
\subsection{Shi tableaux and our poset structure}
\label{sec:shi}
In order to switch  from  Dyck paths 
of semilength $n+1$ to Shi tableaux of size $n$ and vice versa, 
we make the convention that each Young diagram of shape $(n,\ldots,1)$ 
contains an additional empty row and empty column and we label rows  
from bottom to top and columns from left to right (so that, 
for $i=1,\ldots,n+1$,  the 
$i$-th row has $i-1$ boxes   and the $i$-th column has $n+1-i$ 
boxes, as shown in Figure \ref{fig:deletions}).
In this way, the  $i$-th $\D$-step ($\U$-step) of the Dyck path $\pa$ is the 
horizontal  (vertical) unit step on the $i$-th row (column).  
We denote by $\shiT{n}$ the set of all  Shi tableaux of size $n$ and by 
$\shiT{}$ the set of all Shi tableaux of all sizes.

Our main goal in this paper is to introduce and study a new notion of 
\emph{patterns} on the set $\shiT{}$ of  Shi tableaux.
The pattern containment is described in terms of two types of 
deletions on Shi tableaux which we call  {\em bounce deletions}. 
For $2\leq{i}\leq{n+1}$, we denote by $\de{i}{i-1}$  the action of deleting the 
$i$-th row and the  $(i-1)$-st column of $T$,  and
for $1\leq{i}\leq{n+1}$ we denote by
$\de{i}{i}$  the action of deleting the $i$-th  row and
the $i$-th column of $T$ (see Figure \ref{fig:deletions}).
The special cases $\de{i}{i}$ with $i=1$ or $n+1$,
can be thought of as deleting the first column and top row respectively.
Viewing the tableau $T$ as a Dyck path $\pa$ and indexing its $\U$ and 
$\D$-steps from $1$ to $n+1$, it is immediate to see that  
$\de{i}{i-1}$ deletes $\U_i,\D_{i-1}$,  
whereas  $\de{i}{i}$  deletes $\U_i,\D_i$ from $\pi$.  
Another  way to define the actions $\de{i}{i-1}, \de{i}{i}$ is through
the $2\times(n+1)$ standard Young tableaux $S(T)$ of $\pa$. 
The standard Young tableau of $\de{i}{i}(T)$ is obtained by deleting the 
$i$-th column of the standard $2\times (n+1)$ tableau $S(T)$ of $\pa$ and adjusting the larger entries.
The standard Young tableau of 
$\de{i}{i-1}(T)$ is obtained by deleting the $i$-th entry of the first row 
and $(i-1)$-st entry of the second row  of $S(T)$
and adjusting the larger entries (see Figure \ref{fig:deletions}). 
\begin{figure}[h!]
	\begin{center}
		\begin{tikzpicture}[scale=0.5]
			\draw[color=black,line width=1pt]
			(0,0)--(0,2)--(1,2)--(1,4)--(2,4)--(2,5)--(4,5)--(4,6)--(6,6);
			\draw[dotted] (0, 0) grid (0, 1);
			\draw[dotted] (0, 1) grid (1, 2);
			\draw[dotted] (0, 2) grid (2, 3);
			\draw[dotted] (0, 3) grid (3, 4);
			\draw[dotted] (0, 4) grid (4, 5);
			\draw[dotted] (0, 5) grid (5, 6);
			\draw[dotted] (5,6) -- (6,6);
			\draw[line width=12pt,opacity=0.15,rounded corners=0]
			(0,2.5)--(1.5,2.5)--(1.5,6);
			%\node at (-1,2.5){$\sf d_4$};
			\fill[opacity=0.7] (0,0) circle (2pt);
			\fill[opacity=0.7] (6,6) circle (2pt);
			\draw[->,line width=1pt,color=black,opacity=0.5] (4,2) to[bend left,out=20] (7,2);
			\node at (5.5,2.9){\color{black}$\de{3}{2}$};
			\node at (-1,0.5){{\s{ \em row 1}}};
			\node at (-1,1.5){{\s{ \em row 2}}};
			\node at (-1,2.5){{\s{ \em row 3}}};
			\node at (-1.2,5.5){{\s{ \em row n+1}}};
			\node at (.7,7.2){\rotatebox{65}{{\s{ \em column 1}}}};
			\node at (1.7,7.2){\rotatebox{65}{{\s{ \em column 2}}}};
			\node at (4,7.2){$\cdots$};
			\node at (6,7.5){\rotatebox{65}{{\s{ \em column n+1}}}};
			\fill[color=black,opacity=0.4] (3.5,5.5) circle (5pt);
			\fill[color=black,opacity=0.4] (2.5,5.5) circle (5pt);
			\fill[color=black,opacity=0.4] (1.5,4.5) circle (5pt);
			\fill[color=black,opacity=0.4] (1.5,5.5) circle (5pt);
			\fill[color=black,opacity=0.4] (0.5,2.5) circle (5pt);
			\fill[color=black,opacity=0.4] (0.5,3.5) circle (5pt);
			\fill[color=black,opacity=0.4] (0.5,4.5) circle (5pt);
			\fill[color=black,opacity=0.4] (0.5,5.5) circle (5pt);
			\node[lightgray] at(0.5,1.5){$\circ$};
			\node[lightgray] at(1.5,2.5){$\circ$};
			\node[lightgray] at(1.5,3.5){$\circ$};
			\node[lightgray] at(2.5,3.5){$\circ$};
			\node[lightgray] at(2.5,4.5){$\circ$};
			\node[lightgray] at(3.5,4.5){$\circ$};
			\node[lightgray] at(4.5,5.5){$\circ$};
			
			%\begin{scope}[yshift=-1.6cm,scale=0.5]
			%  \draw (0,0) grid (6,2);
			%  \end{scope}
			
			\begin{scope}[xshift=8cm,yshift=0.5cm]
				\draw[color=black,line width=1pt]
				(0,0)--(0,2)--(1,2)--(1,4)--(3,4)--(3,5)--(5,5);
				\draw[dotted] (0, 0) grid (0, 1);
				\draw[dotted] (0, 1) grid (1, 2);
				\draw[dotted] (0, 2) grid (2, 3);
				\draw[dotted] (0, 3) grid (3, 4);
				\draw[dotted] (0, 4) grid (4, 5);
				%\draw[dotted] (0, 5) grid (5, 6);
				\draw[dotted] (4,5) -- (5,5);
				\fill[opacity=0.7] (0,0) circle (2pt);
				\fill[opacity=0.7] (5,5) circle (2pt);
				\fill[color=black,opacity=0.4] (1.5,4.5) circle (5pt);
				\fill[color=black,opacity=0.4] (2.5,4.5) circle (5pt);
				\fill[color=black,opacity=0.4] (0.5,2.5) circle (5pt);
				\fill[color=black,opacity=0.4] (0.5,3.5) circle (5pt);
				\fill[color=black,opacity=0.4] (0.5,4.5) circle (5pt);
				\node[lightgray] at(0.5,1.5){$\circ$};
				\node[lightgray] at(1.5,2.5){$\circ$};
				\node[lightgray] at(1.5,3.5){$\circ$};
				\node[lightgray] at(2.5,3.5){$\circ$};
				\node[lightgray] at(3.5,4.5){$\circ$};

			\end{scope}
			
			\begin{scope}[xshift=17cm]
				\draw[->,line width=1pt,color=black,opacity=0.5] (4,2) to[bend left,out=20] (7,2);
				\node at (5.5,3){\color{black}$\de{3}{3}$};
				\draw[color=black,line width=1pt]
				(0,0)--(0,2)--(1,2)--(1,4)--(2,4)--(2,5)--(4,5)--(4,6)--(6,6);
				\draw[dotted] (0, 0) grid (0, 1);
				\draw[dotted] (0, 1) grid (1, 2);
				\draw[dotted] (0, 2) grid (2, 3);
				\draw[dotted] (0, 3) grid (3, 4);
				\draw[dotted] (0, 4) grid (4, 5);
				\draw[dotted] (0, 5) grid (5, 6);
				\draw[dotted] (5,6) -- (6,6);
				\draw[line width=12pt,opacity=0.15,rounded corners=0]
				(0,2.5)--(2.5,2.5)--(2.5,6);
				\fill[opacity=0.7] (0,0) circle (2pt);
				\fill[opacity=0.7] (6,6) circle (2pt);
				
				\node at (-1,0.5){{\s{ \em row 1}}};
				\node at (-1,1.5){{\s{ \em row 2}}};
				\node at (-1,2.5){{\s{ \em row 3}}};
				\node at (-1.2,5.5){{\s{ \em row n+1}}};
				\node at (.7,7.2){\rotatebox{65}{{\s{ \em column 1}}}};
				\node at (1.7,7.2){\rotatebox{65}{{\s{ \em column 2}}}};
				\node at (4,7.2){$\cdots$};
				\node at (6,7.5){\rotatebox{65}{{\s{ \em column n+1}}}};
				\fill[color=black,opacity=0.4] (3.5,5.5) circle (5pt);
				\fill[color=black,opacity=0.4] (2.5,5.5) circle (5pt);
				\fill[color=black,opacity=0.4] (1.5,4.5) circle (5pt);
				\fill[color=black,opacity=0.4] (1.5,5.5) circle (5pt);
				\fill[color=black,opacity=0.4] (0.5,2.5) circle (5pt);
				\fill[color=black,opacity=0.4] (0.5,3.5) circle (5pt);
				\fill[color=black,opacity=0.4] (0.5,4.5) circle (5pt);
				\fill[color=black,opacity=0.4] (0.5,5.5) circle (5pt);
				\node[lightgray] at(0.5,1.5){$\circ$};
				\node[lightgray] at(1.5,2.5){$\circ$};
				\node[lightgray] at(1.5,3.5){$\circ$};
				\node[lightgray] at(2.5,3.5){$\circ$};
				\node[lightgray] at(2.5,4.5){$\circ$};
				\node[lightgray] at(3.5,4.5){$\circ$};
				\node[lightgray] at(4.5,5.5){$\circ$};
			\end{scope}
			
			\begin{scope}[xshift=25cm,yshift=0.5cm]
				\draw[color=black,line width=1pt]
				(0,0)--(0,2)--(1,2)--(1,3)--(2,3)--(2,4)--(3,4)--(3,5)--(5,5);
				\draw[dotted] (0, 0) grid (0, 1);
				\draw[dotted] (0, 1) grid (1, 2);
				\draw[dotted] (0, 2) grid (2, 3);
				\draw[dotted] (0, 3) grid (3, 4);
				\draw[dotted] (0, 4) grid (4, 5);
				\draw[dotted] (4,5) -- (5,5);
				\fill[opacity=0.7] (0,0) circle (2pt);
				\fill[opacity=0.7] (5,5) circle (2pt);
				\fill[color=black,opacity=0.4] (1.5,4.5) circle (5pt);
				\fill[color=black,opacity=0.4] (2.5,4.5) circle (5pt);
				\fill[color=black,opacity=0.4] (0.5,2.5) circle (5pt);
				\fill[color=black,opacity=0.4] (0.5,3.5) circle (5pt);
				\fill[color=black,opacity=0.4] (1.5,3.5) circle (5pt);
				\fill[color=black,opacity=0.4] (0.5,4.5) circle (5pt);
				\node[lightgray] at(0.5,1.5){$\circ$};
				\node[lightgray] at(1.5,2.5){$\circ$};
				\node[lightgray] at(2.5,3.5){$\circ$};
				\node[lightgray] at(3.5,4.5){$\circ$};
			\end{scope}
			%%%%%%%%%%%%%%%%%%%%%%%%%%%%%%%%%%%%%%%    standard young tableau % % % % % % % %
			
			\begin{scope}[yshift=-2.2cm,scale=0.7]
				\draw (0,0) grid (6,2);
				\foreach \x/\y in {0.5/1,1.5/2,2.5/4,3.5/5,4.5/7,5.5/10}
				\node at (\x,1.5) {\tiny \y};
				
				\foreach \x/\y in {0.5/3,1.5/6,2.5/8,3.5/9,4.5/11,5.5/12}
				\node at (\x,.5) {\tiny \y};
				\draw[red,line width=2pt,opacity=0.6] (3,2)--(1,0); 
			\end{scope} 
			\begin{scope}[xshift=8cm,yshift=-2.2cm,scale=0.7]
				\draw (0,0) grid (5,2);
				\foreach \x/\y in {0.5/1,1.5/2,2.5/4,3.5/5,4.5/8}
				\node at (\x,1.5) {\tiny \y};
				
				\foreach \x/\y in {0.5/3,1.5/6,2.5/7,3.5/9,4.5/10}
				\node at (\x,.5) {\tiny \y};
			\end{scope}

			\begin{scope}[xshift=17cm,yshift=-2.2cm,scale=0.7]
				\draw (0,0) grid (6,2);
				\foreach \x/\y in {0.5/1,1.5/2,2.5/4,3.5/5,4.5/7,5.5/10}
				\node at (\x,1.5) {\tiny \y};
				
				\foreach \x/\y in {0.5/3,1.5/6,2.5/8,3.5/9,4.5/11,5.5/12}
				\node at (\x,.5) {\tiny \y};
				\draw[red,line width=2pt,opacity=0.6] (2.5,2)--(2.5,0); 
			\end{scope} 
			\begin{scope}[xshift=25cm,yshift=-2.2cm,scale=0.7]
				\draw (0,0) grid (5,2);
				\foreach \x/\y in {0.5/1,1.5/2,2.5/4,3.5/6,4.5/8}
				\node at (\x,1.5) {\tiny \y};
				
				\foreach \x/\y in {0.5/3,1.5/5,2.5/7,3.5/9,4.5/10}
				\node at (\x,.5) {\tiny \y};
			\end{scope}    
		\end{tikzpicture}
		%\caption{The action $\del{3}$ and $\dell{3}$ on the Dyck path 
		\caption{The bounce deletions $\de{3}{2}$ and $\de{3}{3}$ on the Dyck path 
			$\U\U\D\U\U\D\U\D\D\U\D\D$
			results in $\U\U\D\U\U\D\D\U\D\D$ and $\U\U\D\U\D\U\D\U\D\D$ respectively.
		}
		\label{fig:deletions}
	\end{center}
\end{figure}
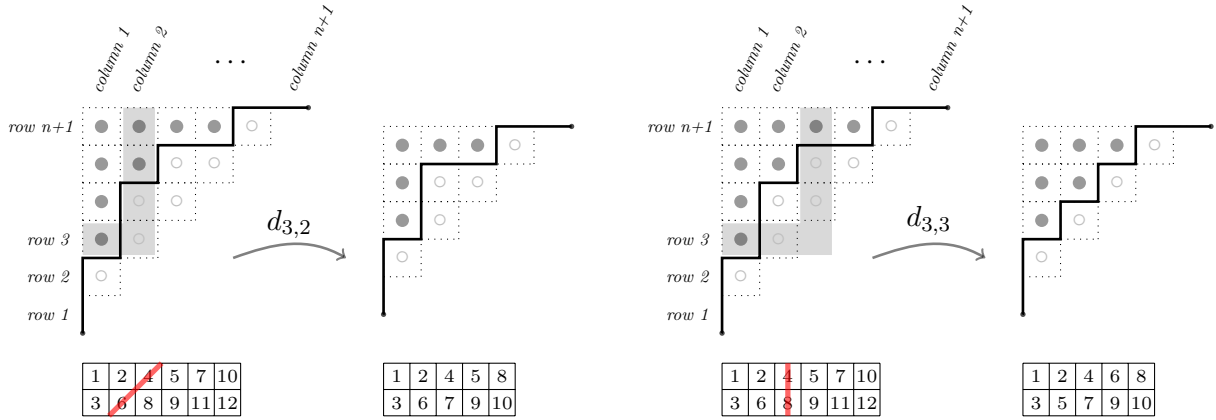
\begin{figure}
	
	\begin{tikzpicture}[scale=0.4]
		
		%\node at (0,6){\fs \color{cyan}\ul{example}};
		\begin{scope}[xshift=5cm]
			\draw[color=black,line width=0.8pt]
			(0,0)--(0,2)--(1,2)--(1,4)--(2,4)--(2,5)--(4,5)--(4,6)--(6,6)--(6,7)--(7,7);
			\draw[densely dotted] (0, 0) grid (0, 1);
			\draw[densely dotted] (0, 1) grid (1, 2);
			\draw[densely dotted] (0, 2) grid (2, 3);
			\draw[densely dotted] (0, 3) grid (3, 4);
			\draw[densely dotted] (0, 4) grid (4, 5);
			\draw[densely dotted] (0, 5) grid (5, 6);
			\draw[densely dotted] (0, 6) grid (6,7);
			\draw[densely dotted] (6,7) -- (7,7);
			\node at (2,0){ $T$};
			\draw[line width=10pt,opacity=0.15,rounded corners=0] (0,2.5)--(1.5,2.5)--(1.5,7);
			\foreach \i in {0.1,0.2,0.3,0.4,0.5,0.6,0.7,0.8,0.9}
			{\draw[red,line width=0.3pt,xshift=1cm,yshift=\i cm] (-0.1,1.9)--(0.1,2.1);}
			\foreach \i in {1.1,1.2,1.3,1.4,1.5,1.6,1.7,1.8,1.9}
			{\draw[red,line width=0.3pt,xshift=\i cm,yshift=4 cm] (-0.1,-0.1)--(0.1,0.1);}
			\node at (6.5,3){$\de{3}{2}$};
		\end{scope}
		
		\begin{scope}[xshift=14cm]
			
			\draw[->,line width=0.7pt,color=black,opacity=0.5](-4,2) to[bend left,out=20](-1.5,2);
			\draw[color=black,line width=0.8pt]
			(0,0)--(0,2)--(1,2)--(1,4)--(3,4)--(3,5)--(5,5)--(5,6)--(6,6);
			\draw[densely dotted] (0, 0) grid (0, 1);
			\draw[densely dotted] (0, 1) grid (1, 2);
			\draw[densely dotted] (0, 2) grid (2, 3);
			\draw[densely dotted] (0, 3) grid (3, 4);
			\draw[densely dotted] (0, 4) grid (4, 5);
			\draw[densely dotted] (0, 5) grid (5, 6);
			\draw[densely dotted] (5,6) -- (6,6);

			\draw[line width=10pt,opacity=0.15,rounded corners=0](0,4.5)--(4.5,4.5)--(4.5,6);
			\fill[opacity=0.7] (0,0) circle (2pt);
			\fill[opacity=0.7] (6,6) circle (2pt);
			%\draw[->,line width=0.7pt,color=black,opacity=0.5] (4,2) to[bend left,out=20] (6.5,2);
			\foreach \i in {0.1,0.2,0.3,0.4,0.5,0.6,0.7,0.8,0.9}
			{\draw[red,line width=0.3pt,xshift=3cm,yshift=\i cm] 
				($(0,2)+(-0.1,1.9)$)--($(0,2)+(0.1,2.1)$);}
			\foreach \i in {1.1,1.2,1.3,1.4,1.5,1.6,1.7,1.8,1.9}
			{\draw[red,line width=0.3pt,xshift=\i cm,yshift=5 cm] 
				($(3,0)+(-0.1,-0.1)$)--($(3,0)+(0.1,0.1)$);}
			\node at (6.5,3){$\de{5}{5}$};
		\end{scope}
		\begin{scope}[xshift=23cm]
			
			\draw[->,line width=0.7pt,color=black,opacity=0.5](-4,2) to[bend left,out=20](-1.5,2);
			\draw[color=black,line width=0.8pt]
			(0,0)--(0,2)--(1,2)--(1,4)--(4,4)--(4,5)--(5,5);
			\draw[densely dotted] (0, 0) grid (0, 1);
			\draw[densely dotted] (0, 1) grid (1, 2);
			\draw[densely dotted] (0, 2) grid (2, 3);
			\draw[densely dotted] (0, 3) grid (3, 4);
			\draw[densely dotted] (0, 4) grid (4, 5);
			\draw[densely dotted] (4,5) -- (5,5);
			\draw[line width=10pt,opacity=0.15,rounded corners=0](0.5,0)--(0.5,5);
			\foreach \i in {0.1,0.2,0.3,0.4,0.5,0.6,0.7,0.8,0.9}
			{\draw[red,line width=0.3pt,xshift=0cm,yshift=\i cm] 
				($(0,-2)+(-0.1,1.9)$)--($(0,-2)+(0.1,2.1)$);}
			\foreach \i in {1.1,1.2,1.3,1.4,1.5,1.6,1.7,1.8,1.9}
			{\draw[red,line width=0.3pt,xshift=\i cm,yshift=2 cm] 
				($(-1,0)+(-0.1,-0.1)$)--($(-1,0)+(0.1,0.1)$);}
			\node at (5.5,3){$\de{1}{1}$};
		\end{scope}
		\begin{scope}[xshift=31cm]
			
			\draw[->,line width=1pt,color=black,opacity=0.5](-4,2) to[bend 
			left,out=20](-1.5,2);
			\draw[color=black,line width=0.8pt]
			(0,0)--(0,3)--(3,3)--(3,4)--(4,4);
			\draw[densely dotted] (0, 0) grid (0, 1);
			\draw[densely dotted] (0, 1) grid (1, 2);
			\draw[densely dotted] (0, 2) grid (2, 3);
			\draw[densely dotted] (0, 3) grid (3, 4);
			\draw[densely dotted] (3,4) -- (4,4);
			
			\node at (2,0){ $T'$};
			
		\end{scope}
	\end{tikzpicture}
	
	\caption{}   
	\label{fig:patern_ex}
\end{figure}    

Using bounce deletions we can endow $\shiT{}$ with a poset structure.  
For any undefined terminology on 
posets we refer the reader to \cite[Section 3]{StanV1}. 
\begin{definition}
	\label{def:ourpattern}
	The set $\shiT{}$ of Shi tableaux becomes a poset by declaring that $T$ covers $T'$
	if $T'$ is obtained from $T$ after a bounce deletion. 
	%Let $T\in\shiT{n}$ and $T'\in\shiT{k}$ be Shi tableaux with $k<n$. 
	We say that $T'$ \emph{occurs as a pattern} in $T$ if  $T'\preceq_{\shiT{}} T$, i.e., if $T'$ can be obtained 
	from $T$ after an iteration of  bounce deletions.
\end{definition}

See Figure~\ref{fig:patern_ex} for an example of pattern occurrence in Shi tableaux. 
Notice  that if  the  pattern $T'$ 
occurs  in $T$ then, in terms of Dyck paths,  $T'$ occurs as a pattern in $T$ in the
sense of \cite{spiridonov-fpsac}.  
Indeed, the actions $\de{i}{i-1},\de{i}{i}$
delete pairs of $\U,\D$  
so that the resulting path is a again a Dyck path
(which is precisely the requirement of  pattern occurrence in 
\cite{spiridonov-fpsac}). The reverse is not always true, since
in our pattern definition the deleted pair $\U,\D$
should obey stronger restrictions. For example, the path 
$\pa'=\U\U\D\D$  occurs as a pattern in   $\pa=\U\D\U\D\U\D$
in the sense of \cite{spiridonov-fpsac}  but not in the sense of Definition
\ref{def:ourpattern}.

%\medskip 
%The notion of pattern in  Definition \ref{def:ourpattern} 
%gives rise to a partial order  on the set $\cT$ of all Shi tableau
%by declaring that $T$  covers $T'$  
% if  the second is obtained from 
%the first after a bounce deletion (cf. Figure \ref{fig:all_the_poset}). 

\begin{figure}
	\begin{tikzpicture}[scale=0.35]
		%\node at (0.5,-5){$\emptyset$};
		
		\node at (0,16){$\vdots$};
		\node at (-4,16){$\vdots$};
		\node at (4,16){$\vdots$};
		\node at (9,16){\rotatebox{-90}{$\ddots$}};
		\node at (-9,16){$\ddots$};
		\node at (15,16){\rotatebox{-90}{$\ddots$}};
		\node at (-15,16){$\ddots$};
		
		\begin{scope}[xshift=0cm,yshift=1cm,scale=1]
			\draw[rounded corners=1, color=azu, line width=0.81,scale=0.8]  
			(0,-7)--(0,-6)--(1,-6);
			\draw[densely dotted,scale=0.8]  (0,-7)--(0,-6)--(1,-6);
			
			\draw[line width=0.5pt,color=black,opacity=0.5](0.5,-4.5)to[bend 
			right,out=3](2,-2.3);  
			\draw[line width=0.5pt,color=black,opacity=0.5](0.5,-4.5)to[bend 
			right,out=3](-2,-2.3);  
		\end{scope}
		\begin{scope}[xshift=-2.2cm,yshift=-1cm,scale=0.67]
			\draw[rounded corners=1, color=azu, line width=0.81]  (0,0)--(0,2)--(2,2);
			\draw[densely dotted] (0, 0) grid (0, 1);
			\draw[densely dotted] (0, 1) grid (1, 2);
			\draw[densely dotted] (1,2)--(2,2);

		\end{scope}
		\begin{scope}[xshift=2cm,yshift=-1cm,scale=0.67]
			\draw[rounded corners=1, color=azu, line width=0.81]  
			(0,0)--(0,1)--(1,1)--(1,2)--(2,2);
			\draw[densely dotted] (0, 0) grid (0, 1);
			\draw[densely dotted] (0, 1) grid (1, 2);
			\draw[densely dotted] (1,2)--(2,2);
		\end{scope}

		%%%%%%%%% size 2 % % % % % % % % % % % % % % %
		\begin{scope}[xshift=-12.5cm,yshift=4cm,scale=0.67]
			\draw[rounded corners=1, color=azu, line width=0.81](0,0)--(0,3)--(3,3);
			\draw[densely dotted] (0, 2) grid (2,3);
			\draw[densely dotted] (0, 1) grid (1, 2);
			\draw[densely dotted] (2,3)--(3,3);
			\draw[densely dotted] (0,0)--(0,1);
			
			\draw[line width=0.5pt,color=black,opacity=0.5](0.5,-0.5)to[bend
			left,out=3](15.5,-5);  
			
		\end{scope}
		%%%%%%%%%%%%%%%%
		\begin{scope}[xshift=-6cm,yshift=4cm,scale=0.67]
			\draw[rounded corners=1,color=azu,line width=0.81](0,0)--(0,2)--(1,2)--(1,3)--(3,3);
			%\draw[densely dotted] (0,2) grid (2,2);
			\draw[densely dotted] (0, 2) grid (2,3);
			\draw[densely dotted] (0, 1) grid (1, 2);
			\draw[densely dotted] (2,3)--(3,3);
			\draw[densely dotted] (0,0)--(0,1);
			\draw[line width=0.5pt,color=black,opacity=0.5](0.5,-0.5)to[bend left,out=3](6,-5);  
			\draw[line width=0.5pt,color=black,opacity=0.5](0.5,-0.5)to[bend left,out=3](12,-5);  
		\end{scope}
		%%%%%%%%%%%%%%%%%%%%%%%%%%%%%%%%%%
		\begin{scope}[xshift=0cm,yshift=4cm,scale=0.67]
			\draw[rounded corners=1,color=azu,line width=0.81](0,0)--(0,2)--(2,2)--(2,3)--(3,3);
			\draw[densely dotted] (0, 2) grid (2,3);
			\draw[densely dotted] (0, 1) grid (1, 2);
			\draw[densely dotted] (2,3)--(3,3);
			\draw[densely dotted] (0,0)--(0,1);
			\draw[line width=0.5pt,color=black,opacity=0.5](0.5,-0.5)to[bend 
			right,out=3](-2.7,-5);  
			\draw[line width=0.5pt,color=black,opacity=0.5](0.5,-0.5)to[bend left,out=3](3.2,-5);  
		\end{scope}
		\begin{scope}[xshift=6cm,yshift=4cm,scale=0.67]
			\draw[rounded corners=1,color=azu,line width=0.81](0,0)--(0,1)--(1,1)--(1,3)--(3,3);
			\draw[densely dotted] (0, 2) grid (2,3);
			\draw[densely dotted] (0, 1) grid (1, 2);
			\draw[densely dotted] (2,3)--(3,3);
			\draw[densely dotted] (0,0)--(0,1);
			\draw[line width=0.5pt,color=black,opacity=0.5](0.5,-0.5)to[bend 
			right,out=3](-5.5,-5);  
			\draw[line width=0.5pt,color=black,opacity=0.5](0.5,-0.5)to[bend 
			right,out=3](-11.5,-5);  
		\end{scope}
		%%%%%%%%%%%%%%%%%%%%%%%%%%%%%
		\begin{scope}[xshift=12cm,yshift=4cm,scale=0.67]
			\draw[rounded corners=1,color=azu,line 
			width=0.81](0,0)--(0,1)--(1,1)--(1,2)--(2,2)--(2,3)--(3,3);
			\draw[densely dotted] (0, 2) grid (2,3);
			\draw[densely dotted] (0, 1) grid (1, 2);
			\draw[densely dotted] (2,3)--(3,3);
			\draw[densely dotted] (0,0)--(0,1);
			\draw[line width=0.5pt,color=black,opacity=0.5](0.5,-0.5)to[bend 
			right,out=3](-14.3,-5); 
		\end{scope}

		%%%%%%%%%%%%%%%%% size 3 %%%%%%%%%%%%%%%%%%%%%%%%%
		\begin{scope}[xshift=-16cm,yshift=13cm,scale=0.45]
			\draw[rounded corners=1,color=azu,line width=0.81](0,0)--(0,4)--(4,4);
			\draw[densely dotted] (0, 3) grid (3,4);
			\draw[densely dotted] (0, 2) grid (2,3);
			\draw[densely dotted] (0, 1) grid (1, 2);
			\draw[densely dotted] (3,4)--(4,4);
			\draw[densely dotted] (0,0)--(0,1);
			\draw[line width=0.5pt,color=black,opacity=0.5](0,-0.5)to[bend left,out=2](8.3,-14); 
		\end{scope}
		\begin{scope}[xshift=-13.5cm,yshift=13cm,scale=0.45]
			\draw[rounded corners=1, color=azu,line width=0.81] 
			(0,0)--(0,3)--(1,3)--(1,4)--(4,4);
			\draw[densely dotted] (0, 3) grid (3,4);
			\draw[densely dotted] (0, 2) grid (2,3);
			\draw[densely dotted] (0, 1) grid (1, 2);
			\draw[densely dotted] (3,4)--(4,4);
			\draw[densely dotted] (0,0)--(0,1);
			\draw[line width=0.5pt,color=black,opacity=0.5](0,-0.5)to[bend left,out=2](3,-14); 
			\draw[line width=0.5pt,color=black,opacity=0.5](0,-0.5)to[bend left,out=2](17,-14); 
		\end{scope}
		\begin{scope}[xshift=-11cm,yshift=13cm,scale=0.45]
			\draw[rounded corners=1, color=azu,line width=0.81] 
			(0,0)--(0,3)--(2,3)--(2,4)--(4,4);
			\draw[densely dotted] (0, 3) grid (3,4);
			\draw[densely dotted] (0, 2) grid (2,3);
			\draw[densely dotted] (0, 1) grid (1, 2);
			\draw[densely dotted] (3,4)--(4,4);
			\draw[densely dotted] (0,0)--(0,1);
			\draw[line width=0.5pt,color=black,opacity=0.5](0,-0.5)to[bend left,out=2](-1.8,-14); 
			\draw[line width=0.5pt,color=black,opacity=0.5](0,-0.5)to[bend left,out=2](11.8,-14); 
			\draw[line width=0.5pt,color=black,opacity=0.5](0,-0.5)to[bend left,out=-2](24.8,-14); 
		\end{scope}
		\begin{scope}[xshift=-8.5cm,yshift=13cm,scale=0.45]
			\draw[rounded corners=1, color=azu,line width=0.81] 
			(0,0)--(0,2)--(1,2)--(1,4)--(4,4);
			\draw[densely dotted] (0, 3) grid (3,4);
			\draw[densely dotted] (0, 2) grid (2,3);
			\draw[densely dotted] (0, 1) grid (1, 2);
			\draw[densely dotted] (3,4)--(4,4);
			\draw[densely dotted] (0,0)--(0,1);
			\draw[line width=0.5pt,color=black,opacity=0.5](0.5,-0.5)to[bend 
			left,out=3](-6.8,-14);  
			\draw[line width=0.5pt,color=black,opacity=0.5](0.5,-0.5)to[bend 
			left,out=3](6.5,-14);  
			\draw[line width=0.5pt,color=black,opacity=0.5](0.5,-0.5)to[bend 
			left,out=0.015](32,-14);  
		\end{scope}
		\begin{scope}[xshift=-6cm,yshift=13cm,scale=0.45]
			\draw[rounded corners=1, color=azu,line width=0.81] 
			(0,0)--(0,2)--(1,2)--(1,3)--(2,3)--(2,4)--(4,4);
			\draw[densely dotted] (0, 3) grid (3,4);
			\draw[densely dotted] (0, 2) grid (2,3);
			\draw[densely dotted] (0, 1) grid (1, 2);
			\draw[densely dotted] (3,4)--(4,4);
			\draw[densely dotted] (0,0)--(0,1);
			\draw[line width=0.5pt,color=black,opacity=0.5](0.5,-0.5)to[bend 
			left,out=3](1,-14);
			\draw[line width=0.5pt,color=black,opacity=0.5](0.5,-0.5)to[bend 
			left,out=3](14,-14);    
			\draw[line width=0.5pt,color=black,opacity=0.5](0.5,-0.5)to[bend 
			left,out=3](27,-14);  
		\end{scope}
		\begin{scope}[xshift=-3.5cm,yshift=13cm,scale=0.45]
			\draw[rounded corners=1, color=azu,line width=0.81] 
			(0,0)--(0,2)--(2,2)--(2,4)--(4,4);
			\draw[densely dotted] (0, 3) grid (3,4);
			\draw[densely dotted] (0, 2) grid (2,3);
			\draw[densely dotted] (0, 1) grid (1, 2);
			\draw[densely dotted] (3,4)--(4,4);
			\draw[densely dotted] (0,0)--(0,1);
			\draw[line width=0.5pt,color=black,opacity=0.5](0.5,-0.5)to[bend 
			left,out=3](9,-14);
			\draw[line width=0.5pt,color=black,opacity=0.5](0.5,-0.5)to[bend 
			left,out=3](22,-14);    
			\draw[line width=0.5pt,color=black,opacity=0.5](0.5,-0.5)to[bend 
			left,out=3](-4.3,-14);    
		\end{scope}
		
		%%%%%%%%%%%%%%%% 7 %%%%%%%%%%%%%%%%%%%%%%
		\begin{scope}[xshift=-1cm,yshift=13cm,scale=0.45]
			\draw[rounded corners=1, color=azu,line width=0.81] 
			(0,0)--(0,1)--(1,1)--(1,4)--(4,4);
			\draw[densely dotted] (0, 3) grid (3,4);
			\draw[densely dotted] (0, 2) grid (2,3);
			\draw[densely dotted] (0, 1) grid (1, 2);
			\draw[densely dotted] (3,4)--(4,4);
			\draw[densely dotted] (0,0)--(0,1);
			\draw[line width=0.5pt,color=black,opacity=0.5](0.5,-0.5)to[bend 
			left,out=3](-23,-14);  
			\draw[line width=0.5pt,color=black,opacity=0.5](0.5,-0.5)to[bend 
			left,out=3](17,-14);  
		\end{scope}
		%%%%%%%%%%%%%%%% 8 %%%%%%%%%%%%%%%%%%%%%%
		\begin{scope}[xshift=1cm,yshift=13cm,scale=0.45]
			\draw[rounded corners=1, color=azu,line width=0.81] 
			(0,0)--(0,1)--(1,1)--(1,3)--(2,3)--(2,4)--(4,4);
			\draw[densely dotted] (0, 3) grid (3,4);
			\draw[densely dotted] (0, 2) grid (2,3);
			\draw[densely dotted] (0, 1) grid (1, 2);
			\draw[densely dotted] (3,4)--(4,4);
			\draw[densely dotted] (0,0)--(0,1);
			\draw[line width=0.5pt,color=black,opacity=0.5](0.5,-0.5)to[bend 
			left,out=3](-14,-14);  
			\draw[line width=0.5pt,color=black,opacity=0.5](0.5,-0.5)to[bend 
			left,out=3](13,-14);  
			\draw[line width=0.5pt,color=black,opacity=0.5](0.5,-0.5)to[bend 
			left,out=3](25,-14);  
		\end{scope}
		
		%%%%%%%%%%%%%%%% 9 %%%%%%%%%%%%%%%%%%%%%%
		\begin{scope}[xshift=3.5cm,yshift=13cm,scale=0.45]
			\draw[rounded corners=1, color=azu,line width=0.81] 
			(0,0)--(0,1)--(1,1)--(1,3)--(3,3)--(3,4)--(4,4);
			\draw[densely dotted] (0, 3) grid (3,4);
			\draw[densely dotted] (0, 2) grid (2,3);
			\draw[densely dotted] (0, 1) grid (1, 2);
			\draw[densely dotted] (3,4)--(4,4);
			\draw[densely dotted] (0,0)--(0,1);
			\draw[line width=0.5pt,color=black,opacity=0.5](0.5,-0.5)to[bend 
			left,out=3](20,-14); 
			\draw[line width=0.5pt,color=black,opacity=0.5](0.5,-0.5)to[bend 
			left,out=3](7.8,-14);   
			\draw[line width=0.5pt,color=black,opacity=0.5](0.5,-0.5)to[bend 
			left,out=3](-6,-14);  
		\end{scope}
		%%%%%%%%%%%%%%%% 10 %%%%%%%%%%%%%%%%%%%%%%
		\begin{scope}[xshift=6cm,yshift=13cm,scale=0.45]
			\draw[rounded corners=1, color=azu,line width=0.81] 
			(0,0)--(0,2)--(1,2)--(1,3)--(3,3)--(3,4)--(4,4);
			\draw[densely dotted] (0, 3) grid (3,4);
			\draw[densely dotted] (0, 2) grid (2,3);
			\draw[densely dotted] (0, 1) grid (1, 2);
			\draw[densely dotted] (3,4)--(4,4);
			\draw[densely dotted] (0,0)--(0,1);
			\draw[line width=0.5pt,color=black,opacity=0.5](0.5,-0.5)to[bend 
			left,out=3](15,-14);
			\draw[line width=0.5pt,color=black,opacity=0.5](0.5,-0.5)to[bend 
			left,out=3](-24.5,-14);    
			\draw[line width=0.5pt,color=black,opacity=0.5](0.5,-0.5)to[bend 
			left,out=3](-11,-14);  
		\end{scope}
		%%%%%%%%%%%%%%%% 11 %%%%%%%%%%%%%%%%%%%%%%
		\begin{scope}[xshift=8.5cm,yshift=13cm,scale=0.45]
			\draw[rounded corners=1, color=azu,line width=0.81] 
			(0,0)--(0,3)--(3,3)--(3,4)--(4,4);
			\draw[densely dotted] (0, 3) grid (3,4);
			\draw[densely dotted] (0, 2) grid (2,3);
			\draw[densely dotted] (0, 1) grid (1, 2);
			\draw[densely dotted] (3,4)--(4,4);
			\draw[densely dotted] (0,0)--(0,1);
			%\draw[line width=0.5pt,color=black,opacity=0.5,red](0,-0.5)to[bend 
			%right,out=0](-39,-14);  
			%\draw[line width=0.5pt,color=black,opacity=0.5,red](0,-0.5)--(-39,-14);  
			\draw[line width=0.5pt,color=black,opacity=0.5](0,-0.5)to[out=-120,in=0](-20,-10);  
			\draw[line width=0.5pt,color=black,opacity=0.5](-20,-10)to[out=180,in=10](-41,-14);  
			\draw[line width=0.5pt,color=black,opacity=0.5](0,-0.5)to[bend 
			left,out=30](-15,-14);
		\end{scope}
		
		%%%%%%%%%%%%%%%% 12 %%%%%%%%%%%%%%%%%%%%%%
		\begin{scope}[xshift=11cm,yshift=13cm,scale=0.45]
			\draw[rounded corners=1, color=azu,line width=0.81] 
			(0,0)--(0,1)--(1,1)--(1,2)--(2,2)--(2,4)--(4,4);
			\draw[densely dotted] (0, 3) grid (3,4);
			\draw[densely dotted] (0, 2) grid (2,3);
			\draw[densely dotted] (0, 1) grid (1, 2);
			\draw[densely dotted] (3,4)--(4,4);
			\draw[densely dotted] (0,0)--(0,1);
			\draw[line width=0.5pt,color=black,opacity=0.5](0,-0.5)to[bend 
			left,out=20](-8,-14);
			\draw[line width=0.5pt,color=black,opacity=0.5](0,-0.5)to[bend 
			left,out=20](4.5,-14);
		\end{scope}
		
		%%%%%%%%%%%%%%%% 13 %%%%%%%%%%%%%%%%%%%%%%
		\begin{scope}[xshift=13.5cm,yshift=13cm,scale=0.45]
			\draw[rounded corners=1, color=azu,line width=0.81] 
			(0,0)--(0,2)--(2,2)--(2,3)--(3,3)--(3,4)--(4,4);
			\draw[densely dotted] (0, 3) grid (3,4);
			\draw[densely dotted] (0, 2) grid (2,3);
			\draw[densely dotted] (0, 1) grid (1, 2);
			\draw[densely dotted] (3,4)--(4,4);
			\draw[densely dotted] (0,0)--(0,1);
			\draw[line width=0.5pt,color=black,opacity=0.5](0,-0.5)to[bend 
			left,out=20](-0.8,-14);
			\draw[line width=0.5pt,color=black,opacity=0.5](0,-0.5)to[bend 
			left,out=10](-26.5,-14);
		\end{scope}

		%%%%%%%%%%%%%%%%% 14  % % % % % % % % % % % % % % %
		\begin{scope}[xshift=16cm,yshift=13cm,scale=0.45]
			\draw[rounded corners=1, color=azu,line width=0.81] 
			(0,0)--(0,1)--(1,1)--(1,2)--(2,2)--(2,3)--(3,3)--(3,4)--(4,4);
			\draw[densely dotted] (0, 3) grid (3,4);
			\draw[densely dotted] (0, 2) grid (2,3);
			\draw[densely dotted] (0, 1) grid (1, 2);
			\draw[densely dotted] (3,4)--(4,4);
			\draw[densely dotted] (0,0)--(0,1);
			\draw[line width=0.5pt,color=black,opacity=0.5](0,-0.5)to[bend 
			left,out=20](-6,-14);
		\end{scope}
		
		%\node[text width=4cm] at (-10,-1){
		%  \begin{tcolorbox}[colback=azu!10,boxrule=0pt]\fs
		%  For all $1\leq{i}\leq{n}$:
		%  
		%  $\del{i}(T)\preceq_{lc} T$ 
		%  
		%   $\dell{i}(T)\preceq_{lc} T$
		%  \end{tcolorbox}
		%};
		
		\node at (-17.5,14){$\mathcal T_3$};
		\node at (-15,5){$\mathcal T_2$};
		\node at (-5,-0.5){$\mathcal T_1$};
		
	\end{tikzpicture}

	\caption{The first few levels of the  poset $(\shiT{},\preceq_{\shiT{}})$ of Shi tableau}
	\label{fig:all_the_poset}
\end{figure}
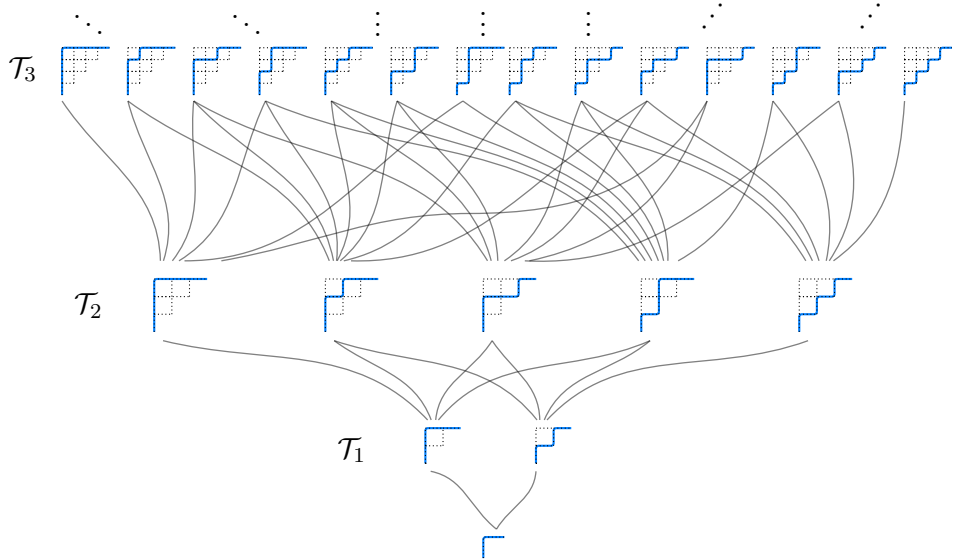

\begin{remark}
	Although we gave the definition of Dyck paths in terms of  unit steps $(1,1)$ 
	and  $(1,-1)$ and endpoints on the $x$-axis, in the remainder of the paper we
	rotate the setting  by 45 degrees (see Figure \ref{fig:dyckshi}). In other 
	words, unless stated otherwise,   we    align all Dyck paths so that their 
	unit steps are $(1,0), (0,1)$ and their 
	endpoints lie on the main diagonal $x=y$. 
	\label{rotate45}
\end{remark}

%\subsection{Geometric interpretation of $\de{i}{i}$ and $\de{i}{i-1}$}
\subsection{Geometric interpretation of our poset structure} 
In what follows, we discuss the poset structure  $(\shiT{},\preceq_{\shiT{}})$ and more precisely the cover relations from the viewpoint of Shi arrangements.

A  hyperplane arrangement $\aA$ is a finite  set of affine hyperplanes in some
vector space $V\cong\mathbb R^n$.
The {\em regions} of $\aA$ are the connected components defined by the 
complement of the hyperplanes in $\aA$. 
The {\em rank} $r(\aA)$ of a hyperplane arrangement $\aA\subseteq \RR^n$ is the 
dimension of the space $A$ spanned by the normals to the hyperplanes in $\aA$.
The {\em intersection poset} $\mathcal L_{\aA}$ of $\aA$ is the set  of all 
intersections of subcollections of hyperplanes in $\aA$, partially ordered by 
reverse inclusion. The poset $\mathcal L_{\aA}$ is a lattice that 
captures all the combinatorial structure of $\aA$. 
We say that two 
hyperplane arrangements are {\em combinatorially equivalent}
if they have the same  intersection lattice \cite{Stan_ha04}.
If the rank $r(\aA)$ of a hyperplane arrangement $\aA$ is smaller than the 
dimension of the ambient space 
$\RR^n$, we can {\em essentialize} $\aA$, i.e.,  intersect $\aA$ with a subspace $W$ of $\RR^n$
without changing the combinatorial structure. 

To do so,
we chose a subspace $Y$ 
complementary to $A$\footnote{not necessarily  the 
	orthogonal complement} and we define  
\begin{align*}
W:=Y^{\perp}=\{x\in \RR^n: x\cdot y=0 \text{ for all } 
y\in Y \},
\end{align*}
where $x\cdot y$ denotes the standard Euclidean inner product.
The set $\aA_W=\{H\cap W: H\in \aA\}$ is an {\em essentialization of $\aA$}, 
i.e., a hyperplane arrangement in $W$ 
combinatorially equivalent to $\aA$ with $r(\aA_W)=\dim(\aA_W)$ \cite[Section 1.1]{Stan_ha04}. 
Notice that, if we do the above steps with  $A$ being a  {\em proper} rather 
than the {\em whole} space spanned by the normals to the hyperplanes in 
$\aA$,  we  again 
obtain a hyperplane arrangement $\aA_{W}$ which is
combinatorially equivalent to $\aA$ but with 
$r(\aA_W)<\dim(\aA_W)$. 

The {\em Shi arrangement} of type $A_{n-1}$, denoted by $\shi{n}$,  is the 
hyperplane arrangement in $\mathbb 
R^n$ consisting of the hyperplanes $x_i-x_j=0, 1$ for all $1\leq{i}<j\leq{n}$. 
It is well known that the  regions of $\shi{n}$ are in bijection with 
parking functions on $[n]$, thus counted by $(n+1)^{n-1}$ \cite[Section 
5]{Stan_HAIO96}. 
Here, we focus on the set of {\em dominant} regions of $\shi{n}$ 
which are those regions contained in the {\em dominant cone} 
$\mathcal C_n: x_1>x_2>\cdots>x_n$. The set of dominant regions is enumerated  by the Catalan 
number $\frac{1}{n+1}\tbinom{2n}{n}$  
%\footnote{  \et{I'm not sure if this is the best citation}}
and  can be encoded using Shi tableaux in 
$\ccT{n-1}$ \cite{Shi_acawg97}.  More precisely, 
each $T\in\ccT{n-1}$ corresponds to  the dominant region $\mathcal R(T)$
with defining inequalities 
\begin{align}
%\mathcal R(T) : 
\begin{cases}
0<{x_i}-x_j<1 & \text{ if the cell }  (n-i+1,n-j+1) \text{ is empty } \\
1<{x_i}-x_j & \text{ if the cell } (n-i+1,n-j+1) \text{ is full, }
\end{cases}
\label{TtoR}
\end{align}
where, for the position of each cell,  we keep our earlier numbering on rows 
and columns (see Figure \ref{fig:shi(n)}).
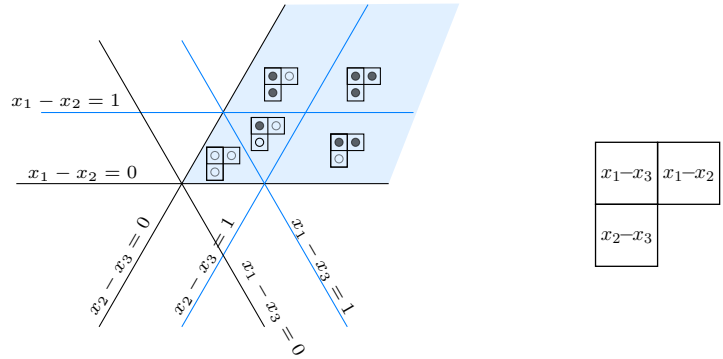
\begin{figure}[h]
\begin{tikzpicture}[scale=1.1]
	
	\fill[azu,opacity=0.12] (0:0)--(0:2.5)--(33:4)--(60:2.5);
	\draw (60:-2)--(60:2.5);
	\draw[xshift=1cm,azu] (60:-2)--(60:2.5);
	\draw (0:-2)--(0:2.5);
	\draw[yshift=0.86cm,xshift=0.3cm,azu] (0:-2)--(0:2.5);
	\draw (120:-2)--(120:2);
	\draw[yshift=0cm,xshift=1cm,azu] (120:-2)--(120:2);
	
	\begin{scope}[scale=0.2,xshift=1.5cm,yshift=1.2cm]
		\node at (1,1){\color{white}.};
		\draw (0,0) rectangle (2,1);
		\draw (0,-1) rectangle (1,1);
		\draw (0,-1) rectangle (1,1);
		\draw[opacity=0.5] (0.5,-0.5)circle (7pt);
		\draw[opacity=0.5] (0.5,0.5)circle (7pt);
		\draw[opacity=0.5] (1.5,0.5)circle (7pt);
	\end{scope}
	\begin{scope}[scale=0.2,xshift=4.2cm,yshift=3cm]
		\draw (0,0) rectangle (2,1);
		\draw (0,-1) rectangle (1,1);
		\draw (0.5,-0.5)circle (7pt);
		\draw[fill=black,opacity=0.6]  (0.5,0.5)circle (7pt);
		\draw[opacity=0.5](1.5,0.5)circle (7pt);
	\end{scope}
	\begin{scope}[scale=0.2,xshift=9cm,yshift=2cm]
		\draw (0,0) rectangle (2,1);
		\draw (0,-1) rectangle (1,1);
		\draw (0,-1) rectangle (1,1);
		\draw[opacity=0.5] (0.5,-0.5)circle (7pt);
		\draw[fill=black,opacity=0.6]  (0.5,0.5)circle (7pt);
		\draw[fill=black,opacity=0.6]  (1.5,0.5)circle (7pt);
	\end{scope}
	\begin{scope}[scale=0.2,xshift=5cm,yshift=6cm]
		\draw (0,0) rectangle (2,1);
		\draw (0,-1) rectangle (1,1);
		\draw[opacity=0.5] (1.5,0.5)circle (7pt);
		\draw[fill=black,opacity=0.6]  (0.5,0.5)circle (7pt);
		\draw[fill=black,opacity=0.6]  (0.5,-0.5)circle (7pt);
	\end{scope}
	\begin{scope}[scale=0.2,xshift=10cm,yshift=6cm]
		\draw (0,0) rectangle (2,1);
		\draw (0,-1) rectangle (1,1);
		\draw[fill=black,opacity=0.6] (0.5,-0.5)circle (7pt);
		\draw[fill=black,opacity=0.6]  (0.5,0.5)circle (7pt);
		\draw[fill=black,opacity=0.6]  (1.5,0.5)circle (7pt);
	\end{scope}

	\node at (-1.4,1){\rotatebox{0}{\tiny $x_1-x_2=1$}};
	\node at (-1.2,0.12){\rotatebox{0}{\tiny $x_1-x_2=0$}};
	\node at (-0.75,-1){\rotatebox{60}{\tiny $x_2-x_3=0$}};
	\node at (0.27,-1){\rotatebox{60}{\tiny $x_2-x_3=1$}};
	
	\node at (1.1,-1.5){\rotatebox{-60}{\tiny $x_1-x_3=0$}};
	\node at (1.7,-1){\rotatebox{-60}{\tiny $x_1-x_3=1$}};
	
	%\node at (0,-2.2){(a)};
	\begin{scope}[xshift=7cm]
		content
		
		%\node at (0,-2.2){(b)};
	\end{scope}
	%\begin{scope}
	\begin{scope}[xshift=5cm,yshift=-1cm,scale=0.75]
		% type A_4 root poset 
		\draw[line width= 0.5pt] (0,0) rectangle (1,1);
		\draw[line width= .5pt] (0,1) rectangle (1,2);  
		\draw[line width= .5pt] (1,1) rectangle (2,2);  
		\node at (1.45, 1.5) {\scalebox{0.7}{ $x_1\!\!-\!\!x_2$}};
		\node at (.45, 1.5) {\scalebox{0.7}{ $x_1\!\!-\!\!x_3$}};
		\node at (.45, .5) {\scalebox{0.7}{ $x_2\!\!-\!\!x_3$}};
		
		%\node at {0,0}{ The tableau $\tg{}$};
	\end{scope}
	
\end{tikzpicture}
	\caption{ The dominant regions in $\shi{3}$ and their correspondence to Shi 
		tableaux. }
	{\color{white} ....................}
	For example, the tableau 
	\begin{tikzpicture}[scale=0.2,xshift=30cm,yshift=-4cm]
	\draw (0,0) rectangle (2,1);
	\draw (0,-1) rectangle (1,1);
	\draw[opacity=0.5] (1.5,0.5)circle (7pt);
	\draw[fill=black,opacity=0.6]  (0.5,0.5)circle (7pt);
	\draw[fill=black,opacity=0.6]  (0.5,-0.5)circle (7pt);
	\end{tikzpicture}
	corresponds to the dominant region defined by the inequalities  $x_1-x_3>1,$
	$x_2-x_3>1$
	and $0<x_1-x_2<1$. {\color{white} .........} 
	\label{fig:shi(n)}
\end{figure}
%\medskip

%We next  discuss  how our 
%deletions $\de{i}{i}, \de{i}{i-1}$ can be interpreted  geometrically. 
Shi arrangements form an
{\em exponential sequence of arrangements (ESA)},  a family of hyperplane arrangements
which posses a strong combinatorial 
symmetry \cite[Section 5.3]{Stan_ha04}. More precisely, $\shi{n}$ has the 
property that, for each $S\subseteq[n]$, its subarrangement $\shi{S}$  consisting of the hyperplanes $x_i-x_j=0,1$ for $i,j\in S$, 
is {\em combinatorially equivalent} to  $\shi{|S|}$. 
%Here we point out that, although the  ambient space  of
%  $\shi{S}$ and $\shi{|S|}$ is not the same,  
%it is the way  their  hyperplanes intersect  that determines their 
%combinatorial structure and thus their equivalence. 
%Although not true for all ESA, for  Shi arrangements we further have  
% that  $\shi{S}\cong 
%\shi{|S|}\times \mathbb R^{n-|S|}$. 

Let us apply the  properties of ESA to $\shi{n}$. 
If $S_k:=[n]\setminus \{k\}$ then $\shi{S_k}$ is the 
subarrangement of $\shi{n}$
from which we have deleted all hyperplanes whose equation involves $x_k$. 
It is immediate to see that the deleted hyperplanes are exactly the ones removed by the deletion $\de{i}{i}$ with $i=n-k+1$ (see Figure \ref{fig:Shi_deletions}(a)). 
Therefore, the bounce deletions of type $\de{i}{i}$ capture such instances of  pattern occurrence. 

The other type of bounce deletions do not constitute an instance of ESA, however they behave likewise 
if restricted to a certain hyperplane. To be more precise, let 
$\shi{S'_k}$ be the subarrangement of $\shi{n}$
from which we have deleted all hyperplanes  $x_i-x_{k+1}=0,1$ ($i\leq{k}$) or $x_{k}-x_i=0,1$
($i>k$). 

It is immediate to see that the deleted hyperplanes are exactly the ones removed by the deletion $\de{i}{i-1}$ with $i=n-k+1$ (see Figure \ref{fig:Shi_deletions}(b)). 
Although $\shi{S'_k}$ is not combinatorially equivalent to $\shi{n-1}$, 
it can be shown that it becomes so if we intersect with the hyperplane $x_k-x_{k+1}=0$. 
% Therefore, the bounce deletions of type $\de{i}{i-1}$ capture such instances of pattern occurrence. 
To see this denote this hyperplane by $W$ and let $Y=\RR(e_k-e_{k+1})$ be the span of its normal vector such that $W=Y^{\perp}$.
Since $Y$ is not contained in the span of the normals to the hyperplanes in $\shi{S_k}$ it follows from the above discussion that $\shi{S_k}_W$ and $\shi{n-1}$ are combinatorially equivalent.
But evidently $\shi{S_k}_W$ is equal to the restriction $\{H\cap W:H\in\shi{S_k'}\}$ of $\shi{S_k'}$ to $W$.

\begin{figure}[h]
	\scalebox{0.85}{\begin{tikzpicture}[scale=0.8]
			% type A_4 root poset 
			\draw[line width= 1pt] (1,0) rectangle (1,5);
			\draw[line width= 1pt] (1,1) rectangle (2,5);  
			\draw[line width= 1pt] (1,2) rectangle (3,5);
			\draw[line width= 1pt] (1,3) rectangle (4,5); 
			\draw[line width= 1pt] (1,4) rectangle (5,5); 
			\draw[line width= 1pt] (5,5)--(6,5);

			\node at (1.45, 4.5) {\scalebox{0.7}{ $x_1\!\!-\!\!x_5$}};
			\node at (2.45, 4.5) {\scalebox{0.7}{ $x_1\!\!-\!\!x_4$}};
			\node at (3.45, 4.5) {\scalebox{0.7}{ $x_1\!\!-\!\!x_3$}};
			\node at (4.45, 4.5) {\scalebox{0.7}{ $x_1\!\!-\!\!x_2$}};
			\node at (1.45, 3.5) {\scalebox{0.7}{ $x_2\!\!-\!\!x_5$}};
			\node at (2.45, 3.5) {\scalebox{0.7}{ $x_2\!\!-\!\!x_4$}};
			\node at (3.45, 3.5) {\scalebox{0.7}{ $x_2\!\!-\!\!x_3$}};
			\node at (3.45, 3.5) {\scalebox{0.7}{ $x_2\!\!-\!\!x_3$}};
			\node at (1.45, 2.5) {\scalebox{0.7}{ $x_3\!\!-\!\!x_5$}};
			\node at (2.45, 2.5) {\scalebox{0.7}{ $x_3\!\!-\!\!x_4$}};
			\node at (1.45, 1.5) {\scalebox{0.7}{ $x_4\!\!-\!\!x_5$}};
			
			\draw[line width=21pt,opacity=0.15,rounded corners=0]
			(1,2.5)--(3.5,2.5)--(3.5,5);
			%\node at(6.7,3.6){ $\de{3}{3}$};
			
			%\node at (7,3){\tikzarrow{azu!30!gray}{\scalebox{0.9}{$\text{pr}'_3:
			%    \mathbb R^5 \rightarrow \{x_2=x_3\}\cap\mathbb R^5 $}}};
			
			\node at (7,3){\tikzarrow{azu!30!gray}{\scalebox{0.9}{$\;\;\;\;\;\de{3}{3}\;\;\;\;\;\;$}}};
			
			\node at (6,0){(a)};
			\begin{scope}[xshift=8cm,yshift=1cm]
				
				\draw[line width= 1pt] (1,1) rectangle (2,4);  
				\draw[line width= 1pt] (1,2) rectangle (3,4);
				\draw[line width= 1pt] (1,3) rectangle (4,4); 
				\draw[line width= 1pt] (1,0)--(1,1);
				\draw[line width= 1pt] (4,4)--(5,4);

				\node at (1.45, 3.5) {\scalebox{0.7}{ $\x_1\!\!-\!\!\x_5$}};
				\node at (2.45, 3.5) {\scalebox{0.7}{ $\x_1\!\!-\!\!\x_4$}};
				\node at (3.45, 3.5) {\scalebox{0.7}{ $\x_1\!\!-\!\!\x_2$}};
				\node at (1.45, 2.5) {\scalebox{0.7}{ $\x_2\!\!-\!\!\x_5$}};
				\node at (2.45, 2.5) {\scalebox{0.7}{ $\x_2\!\!-\!\!\x_4$}};
				\node at (1.45, 1.5) {\scalebox{0.7}{ $\x_4\!\!-\!\!\x_5$}};
				
			\end{scope}
		\end{tikzpicture}\;\;\;\;\;\;\;\;\;\;
		\begin{tikzpicture}[scale=0.8]
			\begin{scope}[xshift=0cm]
				% type A_4 root poset 
				\draw[line width= 1pt] (1,0) rectangle (1,5);
				\draw[line width= 1pt] (1,1) rectangle (2,5);  
				\draw[line width= 1pt] (1,2) rectangle (3,5);
				\draw[line width= 1pt] (1,3) rectangle (4,5); 
				\draw[line width= 1pt] (1,4) rectangle (5,5); 
				\draw[line width= 1pt] (5,5)--(6,5);

				\node at (1.45, 4.5) {\scalebox{0.7}{ $x_1\!\!-\!\!x_5$}};
				\node at (2.45, 4.5) {\scalebox{0.7}{ $x_1\!\!-\!\!x_4$}};
				\node at (3.45, 4.5) {\scalebox{0.7}{ $x_1\!\!-\!\!x_3$}};
				\node at (4.45, 4.5) {\scalebox{0.7}{ $x_1\!\!-\!\!x_2$}};
				\node at (1.45, 3.5) {\scalebox{0.7}{ $x_2\!\!-\!\!x_5$}};
				\node at (2.45, 3.5) {\scalebox{0.7}{ $x_2\!\!-\!\!x_4$}};
				\node at (3.45, 3.5) {\scalebox{0.7}{ $x_2\!\!-\!\!x_3$}};
				\node at (3.45, 3.5) {\scalebox{0.7}{ $x_2\!\!-\!\!x_3$}};
				\node at (1.45, 2.5) {\scalebox{0.7}{ $x_3\!\!-\!\!x_5$}};
				\node at (2.45, 2.5) {\scalebox{0.7}{ $x_3\!\!-\!\!x_4$}};
				\node at (1.45, 1.5) {\scalebox{0.7}{ $x_4\!\!-\!\!x_5$}};
				
				\draw[line width=21pt,opacity=0.15,rounded corners=0]
				(1,2.5)--(2.5,2.5)--(2.5,5);
				%	\node at(6.7,3.6){ $\de{3}{2}$};
				\node at (7,3){\tikzarrow{azu!30!gray}{\scalebox{0.9}{$\de{3}{2}\cap\{x_3=x_4\}$}}};
				\node at (6,0){(b)};
			\end{scope}
			\begin{scope}[xshift=9cm,yshift=1cm]
				% type A_4 root poset 
				%\draw[line width= 1pt] (0,0) rectangle (1,4);
				\draw[line width= 1pt] (1,1) rectangle (2,4);  
				\draw[line width= 1pt] (1,2) rectangle (3,4);
				\draw[line width= 1pt] (1,3) rectangle (4,4); 
				\draw[line width= 1pt] (1,0)--(1,1);
				\draw[line width= 1pt] (4,4)--(5,4);

				\node at (1.45, 3.5) {\scalebox{0.7}{ $\x_1\!\!-\!\!\x_5$}};
				\node at (2.45, 3.5) {\scalebox{0.7}{ $\x_1\!\!-\!\!\x_4$}};
				\node at (3.45, 3.5) {\scalebox{0.7}{ $\x_1\!\!-\!\!\x_2$}};
				\node at (1.45, 2.5) {\scalebox{0.7}{ $\x_2\!\!-\!\!\x_5$}};
				\node at (2.45, 2.5) {\scalebox{0.7}{ $\x_2\!\!-\!\!\x_4$}};
				\node at (1.45, 1.5) {\scalebox{0.7}{ $\x_4\!\!-\!\!\x_5$}};
			\end{scope}
	\end{tikzpicture}}
	\caption{}
	\label{fig:Shi_deletions}
\end{figure}

	\begin{remark}
		As was pointed out by one of the reviewers, the  bounce deletions
		described above	 have a strong resemblance to the deletion and contraction operations on hyperplane arrangements. 
		It would be worth exploring these operations in the context of graphical arrangements (see \cite[Section 2.3]{Stan_ha04})
	\end{remark}

\section{Cover relations}
\label{sec:ulbounds}

In this section we compute the number of upper and lower covers for each Shi tableau $T$
in the poset $(\shiT{},\preceq_{\shiT{}})$. To do so, we define irreducible and strongly irreducible 
Shi tableaux and we find closed/recursive formulas for their number of lower/upper covers.
Then, we explain how each Shi tableau is decomposed into its  irreducible components and how the lower/upper covers are computed in terms of the upper/lower covers of the decomposition. 
\medskip 

We start with notation and definitions that will be used throughout the section. 
In this section, we find it more convenient to use the notation $\pa$ for Dyck paths, rather than $T$ for Shi tableaux, since most of the arguments use the realization of $\pa$ as a word in $\{\U,\D\}^*$. 
However, the underlying tableau of $\pa$ and more precisely its columns and rows is 
present in all the proofs. 

Let $T$ be a Shi  tableau of size $n$ and denote by $\pa\in\dP{n+1}$ the corresponding Dyck path.  
Let us denote by $\col{k}$ the $k$-th column of the underlying tableau of 
$\pa\in\dP{n+1}$ and index the $\U$-steps and $\D$-steps of $\pa$ from $1$ to 
$n+1$. 
The portion  of the path $\pa$ contained in column $\col{k}$ consists of all  
the steps that satisfy $k-1\leq{x}\leq{k}$. In other words 
$\pa\cap\col{k}$ is the subpath between (and not including) $\D_{k-1}$ 
and $\D_{k+1}$. 
An {\em ascent} of a Dyck path $\pa$ is a maximal string of consecutive $\U$-steps  and a 
{\em descent} is a maximal string of consecutive $\D$-steps of $\pa$.
Each Dyck path $\pi$ can be written as a concatenation of ascents and descents, 
i.e.\,,  
\begin{equation}
\pi=\U^{a_1}\D^{b_1}\U^{a_2}\D^{b_2}\cdots\U^{a_\ell}\D^{b_\ell}
\text{ \;\;for some\;\; } 1\leq{\ell}\leq{n+1} \text{ \;\; and\;\;  } a_i,b_i\geq{1}.
\label{asc_des}
\end{equation}
%$\pi=\U^{a_1}\D^{b_1}\U^{a_2}\D^{b_2}\ldots\U^{a_\ell}\D^{b_\ell}$
%for some $1\leq{\ell}\leq{n+1}$ and  $a_i,b_i\geq{1}$. 
For each ascent $\U^{a_i}$ of $\pa$ we define 
a subpath $\overline \pa_i$  of $\pa$ as follows:
\begin{align}
\overline\pa_1= \pa\cap\bigl(\bigcup_{r=1}^{a_1} 
\col{r}\bigr)  \text{ \;\;\;\; and \;\;\;\;\;\;}
\overline\pa_i=\pa\cap\bigl(\bigcup_{r=0}^{a_i} 
\col{a_1+\cdots+a_{i-1}+r}\bigr) \text{\;\; if \;\;} i \geq{2}
\label{equ:pi}
\end{align}
(see Figure \ref{figci}).
As we will explain subsequently, the subpaths $\overline \pa_i$ are involved 
in the computation of the lower and uppers
covers of $\pa$.
For example, for the lower covers,  $\overline\pa_i$
is the range of possible $\D$-step 
deletions  that correspond to the deletion of a $\U$-step from  the ascent 
$\U^{a_i}$ of $\pa$. 
\medskip

We say that $\pi\in\dP{n+1}$ is {\em irreducible} if it does not touch the 
the line $y=x$  except for the origin and the final point
and  {\em strongly irreducible} if it does not touch the 
the line $y=x+1$  except for the first and last step. 
If $\pa$ is irreducible then $b_1+\cdots+b_i<a_1+\cdots+a_i$ 
and if $\pa$ is  strongly irreducible then $b_1+\cdots+b_i+1<a_1+\cdots+a_i$ 
for all $i$ with $1\leq i<\ell$.

The situation where a Shi tableau {$T'$} can be obtained from a tableau {$T$} by two different bounce deletions is rather restrictive.
The lemma below describes the conditions when this is possible.
Pictorially, the following lemma implies that, if two bounce deletions  produce the same lower cover,  then they both delete a $\U$ and a $\D$-step from the same ascent and descent respectively.
The only exception is the case where $\pa$
is symmetric, where deletion of first column or last row produce the same lower cover  (see  Figure \ref{fig:lem_covers}). 
Given a word $w\in\{\U,\D\}^*$ let $\nup(w)$ denote the number of $\U$-steps in $w$, and let $\ndown(w)$ denote the number of $\D$-steps in $w$.
\begin{lemma}
	\label{Lemma:double_cover}
	Consider  $i,j\in[n]$ with $i<j$ and   let $k_i\in\{i-1,i\}$ and $k_j\in\{j-1,j\}$.
	Let $\pa$ be a Dyck path with $\U$ and $\D$-steps indexed by $\U_1,\ldots,\U_n,\D_1,\ldots, \D_n$, 
	%	by $\U_s,\D_s$, $s\in [n]$, 
	and let $\pa'$ be a Dyck path  obtained from $\pa$ by deleting 
	$\U_i$ and $\D_{k_i}$   and also by deleting 
	$\U_j$ and $\D_{k_j}$.
	That is, $\pi'=\de{i}{k_i}(\pi)=\de{j}{k_j}(\pi)$.
	
	\begin{enumerate}[(i)]
		\item
		\label{Item:double_cover:normal}
		If $\D_{k_i}$ occurs after $\U_j$, then 
		$\U_i$ and $\U_j$  belong to the same ascent of $\pi$.
		Furthermore 
		$\D_{k_i}$ and $\D_{k_j}$  belong to the same descent of $\pi$.
		% then the segment of $\pi$ connecting the $i$-th and $j$-th $\U$-step of $\pi$ consists entirely of $\U$-steps. Furthermore the segment of $\pi$ connecting the $k_i$-th and $k_j$-th $\D$-step of $\pi$ consists entirely of down steps.
		\item
		\label{Item:double_cover:exception}
		If $\D_{k_i}$ occurs before  $\U_j$, then the segment of $\pi$ connecting 
		$\U_i$ and $\D_{k_j}$
		is of the form $\U^r(\U\D)^{\ell}\D^t$ for some $r,\ell,t\in\NN$ with $\ell>0$.
		Furthermore this segment begins at height $i-k_i\in\{0,1\}$ and ends at height $j-k_j\in\{0,1\}$.
	\end{enumerate}
\end{lemma}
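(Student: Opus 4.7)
The approach is a letter-by-letter comparison of the equal words $\pi' = d_{i,k_i}(\pi) = d_{j,k_j}(\pi)$. Let $p_i, q_i, p_j, q_j$ be the positions in $\pi$ of $U_i, D_{k_i}, U_j, D_{k_j}$, so that $p_i < p_j$ (since $i<j$) and $q_i \leq q_j$ (since $k_i \leq k_j$). Deleting $\{p_i,q_i\}$ from $\pi$ and renumbering defines a shift function $f_1$ sending each position $m$ of $\pi'$ to the corresponding position in $\pi$, and similarly $f_2$ for the other deletion. The hypothesis becomes $\pi_{f_1(m)} = \pi_{f_2(m)}$ for every valid $m$, and the entire argument consists of unwinding this equation across the regimes determined by how the two removed pairs interleave.

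\textbf{Case (i).} Assume $q_i > p_j$, so the removed indices appear in the order $p_i < p_j < q_i \leq q_j$. For $p_i \leq m \leq p_j - 1$ one has $f_1(m)=m+1$ and $f_2(m)=m$, reducing the equation to $\pi_{m+1} = \pi_m$; chaining these yields $\pi_{p_i} = \pi_{p_i+1} = \cdots = \pi_{p_j} = U$, so $U_i, U_{i+1}, \ldots, U_j$ occupy consecutive positions of $\pi$ and share an ascent. A mirror analysis in the range $q_i-1 \leq m \leq q_j-1$ gives $\pi_{q_i} = \pi_{q_i+1} = \cdots = \pi_{q_j} = D$, placing $D_{k_i}, D_{k_i+1}, \ldots, D_{k_j}$ in a single descent. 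The border case $q_i = q_j$ (which forces $j=i+1$ and $k_i = k_j = i$) collapses to showing that the two deletions must remove adjacent $U$-steps, and the same comparison handles it directly.

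\textbf{Case (ii).} Assume $q_i < p_j$. The same comparison now breaks into three regimes according to how many deleted positions lie below $m$: in the range between the two positions removed by $d_{i,k_i}$ one gets $\pi_{m+1}=\pi_m$; in the gap between the two deletions one gets $\pi_{m+2}=\pi_m$, forcing the letters there to alternate $D,U,D,U,\ldots$; and in the range between the two positions removed by $d_{j,k_j}$ one gets $\pi_{m+2}=\pi_{m+1}$. Stitching these constraints together shows that the segment of $\pi$ connecting $U_i$ and $D_{k_j}$ has shape $U^a (DU)^\ell D^b$ with $\ell \geq 1$, and the identity $U^a (DU)^\ell D^b = U^{a-1}(UD)^{\ell+1}D^{b-1}$ (valid whenever $a,b \geq 1$) recasts this in the claimed form $U^r (UD)^{\ell'} D^t$. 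The endpoint heights then drop out of a direct count: the height of $\pi$ just before $U_i$ equals $(i-1)$ minus the number of $D$-steps preceding $U_i$, and the derived structure pins this difference down to $i - k_i \in \{0,1\}$; the analogous count at the other end gives $j - k_j$.

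\textbf{Main obstacle.} The substantive work sits in case (ii), where $D_{k_i}$ may occur before or after $U_i$ (the former only when $k_i = i-1$), and similarly for $D_{k_j}$ versus $U_j$. This produces up to four sub-orderings of the four positions, each requiring a small adjustment to the three regimes above. The computations are uniform, however, and in each sub-ordering the parity condition $p_j - q_i$ odd that is needed to match the Dyck-path structure falls out of the middle regime automatically. There is no deep obstruction beyond this bookkeeping.
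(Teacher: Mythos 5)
Your proposal is correct and takes essentially the same route as the paper: the paper's proof likewise compares the two spellings of $\pi$ (namely $\pi'$ with either deleted pair reinserted) letter by letter via a two-row alignment, and your three shift-function regimes correspond exactly to its subwords $x$, $y$, $z$, yielding $x=\U^r$, an alternating middle block with the parity constraint, and $z=\D^t$, followed by the same step-counting argument for the endpoint heights. The only difference is notational (position-shift functions versus explicit word decompositions $\pi=\alpha\U xy\D z\beta=\alpha x\U yz\D\beta$), and if anything you are more explicit than the paper in flagging the sub-orderings of $\U_i$ versus $\D_{k_i}$ in case (ii), which the paper's displayed decomposition tacitly assumes away.
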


%\begin{lemma}\label{Lemma:double_cover}
%Let $i,j\in[n]$ with $i<j$ and let $k_i\in\{i-1,i\}$ and $k_j\in\{j-1,j\}$.
%Let $\pi'$ and $\pi$ be Dyck paths such that $\pi'$ is obtained by deleting the $i$-th $\U$-step and the $k_i$-th $\D$-step of $\pi$, and also by deleting the $j$-th $\U$-step and the $k_j$-th $\D$-step of $\pi$.
%That is, $\pi'=\de{i}{k_i}(\pi)=\de{j}{k_j}(\pi)$.
%
%\begin{enumerate}[(i)]
%\item\label{Item:double_cover:normal}
%If the $k_i$-th $\D$-step occurs after the $j$-th $\U$-step in $\pi$, then the $i$-th and $j$-th $\U$-steps of $\pi$ belong to the same ascent of $\pi$.
%Furthermore the $k_i$-th and $k_j$-th $\D$-steps of $\pi$ belong to the same descent of $\pi$.
%% then the segment of $\pi$ connecting the $i$-th and $j$-th $\U$-step of $\pi$ consists entirely of $\U$-steps. Furthermore the segment of $\pi$ connecting the $k_i$-th and $k_j$-th $\D$-step of $\pi$ consists entirely of down steps.
%
%\item\label{Item:double_cover:exception}
%If the $k_i$-th $\D$-step occurs before the $j$-th $\U$-step in $\pi$, then the segment of $\pi$ connecting the $i$-th $\U$-step of $\pi$ and the $k_j$-th $\D$-step of $\pi$ is of the form $\U^r(\U\D)^{\ell}\D^t$ for some $r,\ell,t\in\NN$ with $\ell>0$.
%Furthermore this segment begins at height $i-k_i$ and ends at height $j-k_j$.
%\end{enumerate}
%\end{lemma}

\begin{proof}
	First assume that we are in case \refi{double_cover:normal}.
	Then there exist unique (possibly empty) words $\alpha,x,y,z,\beta\in\{\U,\D\}^*$ such that
	\begin{eq*}
		\pi'=\alpha xyz\beta
		\qquad\text{and}\qquad
		\pi
		=\alpha \U xy\D z\beta 
		=\alpha x\U yz\D \beta 
	\end{eq*}
	such that $\alpha $ contains $i-1$ $\U$-steps, $\alpha x$ contains $j-1$ $\U$-steps, $\alpha \U xy$ contains $k_i-1$ $\D$-steps, and $\alpha x\U yz$ contains $k_j-1$ $\D$-steps.
	In particular the two words
	\begin{eq*}
		\begin{array}{*{13}c}
			\U & x_1 & \cdots & x_{r-1} & x_r & y_1 & \cdots & y_s & \D & z_1 & \cdots & z_{t-1} & z_t\\
			x_1 & x_2 & \cdots & x_{r} & \U & y_1 & \cdots & y_s & z_1 & z_2 & \cdots & z_t & \D
		\end{array}
	\end{eq*}
	agree.
	Consequently $x=\U^r$ and $z=\D^t$ for some $r,t\in\NN$.
	
	Next assume we are in case \refi{double_cover:exception}.
	Then there exist unique (possibly empty) words $\alpha ,x,y,z,\beta \in\{\U,\D\}^*$ such that
	\begin{eq*}
		\pi'=\alpha xyz\beta 
		\qquad\text{and}\qquad
		\pi
		=\alpha \U x\D yz\beta 
		=\alpha xy\U z\D \beta 
	\end{eq*}
	such that $\alpha $ contains $i-1$ $\U$-steps, $\alpha \U x$ contains $k_i-1$ $\D$-steps, $\alpha xy$ contains $j-1$ $\U$-steps, and $\alpha xy\U z$ contains $k_j-1$ $\D$-steps.
	In particular the two words
	\begin{eq*}
		\begin{array}{*{15}c}
			\U & x_1 & \cdots & x_{r-1} & x_r & \D & y_1 & \cdots & y_{s-2} & y_{s-1} & y_s & z_1 & \cdots & z_{t-1} & z_t\\
			x_1 & x_2 & \cdots & x_{r} & y_1 & y_2 & y_3 & \cdots & y_s & \U & z_1 & z_2 & \cdots & z_t & \D
		\end{array}
	\end{eq*}
	agree. It follows from
	\begin{eq*}
		\D =y_2=y_4=\dots
	\end{eq*}
	and
	\begin{eq*}
		\U=y_{s-1}=y_{s-3}=\dots
	\end{eq*}
	that $s$ is even, say $s=2\ell$ for some $\ell\in\NN$, and $y=(\U\D)^{\ell}$.
	The assumption that the $j$-th $\D$-step in $\pi$ occurs after the $k_i$-th $\U$-step implies $\ell>0$.
	Moreover $x=\U^r$ and $z=\D^t$ for some $r,t\in\NN$.
	We have
	\begin{eq*}
		i-k_i
		=\nup(\alpha )-\ndown(\alpha \U x)
		=\nup(\alpha )-\ndown(\alpha ).
	\end{eq*}
	Thus $\alpha $ ends at height $i-k_i$.
	Similarly
	\begin{eq*}
		j-k_j
		=\nup(\alpha xy)-\ndown(\alpha xy\U z)
		=\nup(\alpha xy\U z\D)-\ndown(\alpha xy\U z\D).
	\end{eq*}
	Thus $\beta $ starts at height $j-k_j$.
\end{proof}

\begin{figure}
	\centering
\begin{tikzpicture}[scale=0.34]
	\begin{scope}[xshift=0cm,yshift=0cm]
		\draw[dotted] (0, 0) grid (0, 1);
		\draw[dotted] (0, 1) grid (1, 2);
		\draw[dotted] (0, 2) grid (2, 3);
		\draw[dotted] (0, 3) grid (3, 4);
		\draw[dotted] (0, 4) grid (4, 5);
		\draw[dotted] (0, 5) grid (5, 6);
		\draw[dotted] (0, 6) grid (6, 7);
		\draw[dotted] (6,7) grid (7, 7);
		\draw[rounded corners=1, color=azu, line width=3,opacity=0.4](2,5.5)--(5.5,5.5)--(5.5,7); 
		\draw[rounded corners=1, color=azu, line width=3,opacity=0.4](2,4.5)--(3.5,4.5)--(3.5,7); 
		\draw[rounded corners=1, color=azu, line width=1] 
		(0,0)--(0,3)--(1,3)--(1,4)--(2,4)--(2,6)--(3,6)--(3,7)--(7,7);
		\node at (4,-0.5){\fs (i)};
	\end{scope}    
	\begin{scope}[xshift=15cm,yshift=0cm]
		\draw[dotted] (0, 0) grid (0, 1);
		\draw[dotted] (0, 1) grid (1, 2);
		\draw[dotted] (0, 2) grid (2, 3);
		\draw[dotted] (0, 3) grid (3, 4);
		\draw[dotted] (0, 4) grid (4, 5);
		\draw[dotted] (0, 5) grid (5, 6);
		\draw[dotted] (0, 6) grid (6, 7);
		\draw[dotted] (6,7) grid (7, 7);
		\draw[rounded corners=1, color=azu, line width=3,opacity=0.4](0,.5)--(.5,.5)--(0.5,3); 
		\draw[rounded corners=1, color=azu, line width=3,opacity=0.4](4,6.5)--(6.5,6.5)--(6.6,7); 
		\draw[rounded corners=1, color=azu, line width=1] 
		(0,0)--(0,3)--(1,3)--(1,4)--(2,4)--(2,5)--(3,5)--(3,6)--(4,6)--(4,7)--(7,7);
		\node at (4,-0.5){\fs (ii)};
	\end{scope}    
	
\end{tikzpicture}
	\caption{The two situations described in Lemma \ref{Lemma:double_cover}}
	\label{fig:lem_covers}
\end{figure}
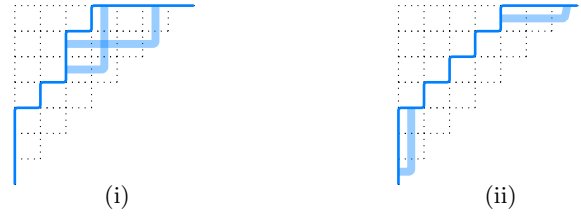

\subsection{Lower covers}
\label{ssec:lb}
Our  goal in this subsection is to describe  and count the elements of the 
set $\lcov{}{\pa}$ of lower covers of $\pa$. To this end, we write $\de{s}{k_s}$
where $k_s\in\{s,s-1\}$ and we group the lower covers of 
$\pa$ according to the ascent from which we delete a $\U$-step. 
More precisely, for each ascent $\U^{a_i}$ we define the set $\lcov{i}{\pa}\subseteq\dP{n}$
as follows: 
\begin{equation}
\begin{aligned}
\lcov{1}{\pa}:=&\{\de{s}{k_s}(\pa) \text{ for }
1\leq{s}\leq{a_1}\}  \hspace{5cm}\text{ if } i=1\\
\lcov{i}{\pa}:=&\{\de{s}{k_s}(\pa) \text{ for }
a_1+\cdots+a_{i-1}+1\leq{s}\leq{a_1+\cdots+a_{i}}\} \;\;\; \text{ if\; } 
2\leq{i}\leq{\ell}. 
\end{aligned} 
\label{setlci}
\end{equation} 
Clearly, the lower  covers of $\pi$ are all paths in the union 
\begin{align}
\label{lcovers}
\lcov{}{\pi}\,=  \bigcup_{i=1}^{\ell}   \lcov{i}{\pi}.  
\end{align}
To describe the paths in each $\lcov{i}{\pa}$
with respect to the original path $\pa$, notice that the 
range of  $s$ for  $\de{s}{k_s}(\pa)\in\lcov{i}{\pa}$
is the index of all  $\U$-steps $\U_s$ in $\U^{a_i}$. 
Since  $\de{s}{k_s}(\pa)$  acts by deleting the pair $\U_s,\D_{k_s}$
with $k_s\in\{s,s-1\}$, we deduce that
all    paths $\pa'\in\lcov{i}{\pa}$ are obtained from $\pa$
by replacing $U^{a_i}$ with $\U^{a_i-1}$ and deleting 
a $\D$-step $\D_{s'}$ where
\begin{equation}
\begin{aligned}
%  \U^{a_i} \text{ with } \U^{a_i-1}\\
%  \text{   and deleting a } \D-\text{step} \D_{s'} \text{ with }\\
1\leq{s'}\leq{a_1}  \hspace{1.7cm} &\text{ if\; } i=1 \;\;\;\;  \text{ or }  
\\
a_1+\cdots+a_{i-1}\leq{s'}\leq{a_1+\cdots+a_{i}} &\text{ if \;} 
2\leq{i}\leq{\ell}. 
\end{aligned} 
\label{setlci2}
\end{equation} 
This means that the range of possible $\D$-step deletions in 
$\lcov{i}{\pa}$ 
is precisely the set of $\D$-steps of the  subpath $\overline \pa_i$ 
described in       \eqref{equ:pi}. 
In other words, $\lcov{i}{\pa}$
is  the  set of  paths obtained from $\pa$ after deleting a 
$\U$-step from the $i$-th ascent $\U^{a_i}$ and a $\D$-step from $\overline\pa_{i}$. 

\begin{figure}
 \begin{tikzpicture}[scale=0.32]
	\begin{scope}[xshift=0cm]
		\draw (0,-1)--(0,0);
		\draw (-1,-1) grid (0,5);
		\draw (0,0) grid (1,5);
		\draw (1,1) grid (2,5);
		\draw (2,2) grid (3,5);
		\draw (3,3) grid (4,5);
		\draw (4,4) grid (5,5);
		\draw (5,5) grid (6,5);
		\draw[line width=1.5pt,opacity=1] (-1,-2)--(-1,1)--(0,1) 
		--(0,3)--(2,3)--(2,4)--(3,4)--(3,5)--(6,5);
		
		\fill[line width=1.7pt,opacity=0.3,cyan] 
		(-1,-2)--(-1,-1)--(0,-1)--(0,0)--(1,0)--(1,1)--(2,1)--(2,2)--(2,5)--(-1,5);
		\draw[line          
		width=1.9pt,opacity=0.8,gray,yshift=0.1cm,xshift=-0.1cm] 
		(-1,-2)--(-1,1)--(0,1)--(0,3)--(2,3)--(2,4);
		\node at (.5,5.5){\color{gray}$\overline\pa_1$};
		%\node at (2,-3){\fs $L_0:\{(0,0),(0,3)\}$ };
	\end{scope}
	\begin{scope}[xshift=9cm]
		\draw (0,-1)--(0,0);
		\draw (-1,-1) grid (0,5);
		\draw (0,0) grid (1,5);
		\draw (1,1) grid (2,5);
		\draw (2,2) grid (3,5);
		\draw (3,3) grid (4,5);
		\draw (4,4) grid (5,5);
		\draw (5,5) grid (6,5);
		\draw[line width=1.5pt,opacity=1] (-1,-2)--(-1,1)--(0,1) 
		--(0,3)--(2,3)--(2,4)--(3,4)--(3,5)--(6,5);
		
		\fill[line width=1.7pt,opacity=0.3,cyan] 
		(1,1)--(2,1)--(2,2)--(3,2)--(3,3)--(4,3)--(4,5)--(1,5);
		
		\draw[line width=1.8pt,opacity=0.8,gray,yshift=0.12cm,xshift=-0.1cm] 
		(1,3.02)--(2,3.02)--(2,4)--(3,4)--(3,5.05)--(4.1,5.05);
		\node at (2.3,5.8){\color{gray}$\overline\pa_2$};
	\end{scope}
	%\begin{scope}[xshift=15cm]
	%\draw (0,-1)--(0,0);
	%\draw (-1,-1) grid (0,5);
	%\draw (0,0) grid (1,5);
	%\draw (1,1) grid (2,5);
	%\draw (2,2) grid (3,5);
	%\draw (3,3) grid (4,5);
	%\draw (4,4) grid (5,5);
	%\draw (5,5) grid (6,5);
	%  \draw[line width=1.5pt,opacity=1] (-1,-2)--(-1,1)--(0,1) 
	%  --(0,3)--(2,3)--(2,4)--(3,4)--(3,5)--(6,5);
	%\fill[line width=1.7pt,opacity=0.3,cyan!30!gray] 
	%(3,3)--(4,3)--(4,5.3)--(3,5.3);
	%\draw[line width=1.8pt,opacity=0.8,gray,yshift=0.15cm,xshift=0cm] 
	%(2.9,3.95)--(2.9,5.03)--(4.1,5.03);
	%%       \node at (3.7,5.5){\color{gray}$\pi_{L_2}$};
	%%   \node at (2,-3){\fs $L_2:\{(2,5)\}$ };
	%\end{scope}
	\begin{scope}[xshift=18cm]
		\draw (0,-1)--(0,0);
		\draw (-1,-1) grid (0,5);
		\draw (0,0) grid (1,5);
		\draw (1,1) grid (2,5);
		\draw (2,2) grid (3,5);
		\draw (3,3) grid (4,5);
		\draw (4,4) grid (5,5);
		\draw (5,5) grid (6,5);
		\draw[line width=1.5pt,opacity=1] (-1,-2)--(-1,1)--(0,1) 
		--(0,3)--(2,3)--(2,4)--(3,4)--(3,5)--(6,5);
		\fill[line width=1.7pt,opacity=0.3,cyan] 
		(3,3)--(4,3)--(4,4)--(5,4)--(5,5)--(3,5);
		%\draw[line width=1.7pt,opacity=0.8,gray,yshift=0.1cm,xshift=0cm] 
		%(2.9,4)--(3.9,4)--(3.9,5)--(5,5);
		\draw[line width=1.8pt,opacity=0.8,gray,yshift=0.15cm,xshift=0cm] 
		(2.9,3.95)--(2.9,5.03)--(5.1,5.03);
		\node at (4.5,5.8){\color{gray}$\overline\pa_3$};
		%\node at (2,-3){\fs $L_3:\{(3,5),(3,6)\}$ };
	\end{scope}
	%\begin{scope}[xshift=30cm]
	%\draw (0,-1)--(0,0);
	%\draw (-1,-1) grid (0,5);
	%\draw (0,0) grid (1,5);
	%\draw (1,1) grid (2,5);
	%\draw (2,2) grid (3,5);
	%\draw (3,3) grid (4,5);
	%\draw (4,4) grid (5,5);
	%\draw (5,5) grid (6,5);
	%\draw[line width=1.5pt,opacity=1] (-1,-2)--(-1,1)--(0,1) 
	%--(0,3)--(2,3)--(2,4)--(3,4)--(3,5)--(6,5);
	%\fill[line width=1.7pt,opacity=0.3,cyan] 
	%(4,4)--(5,4)--(5,5)--(4,5);
	%\draw[line width=1.7pt,opacity=0.8,gray,yshift=0.1cm,xshift=0cm] 
	%(3.9,4)--(3.9,5)--(5,5);
	%\node at (4.5,5.55){\color{gray}$\pi_{L_4}$};
	%\node at (2,-3){\fs $L_4:\{(4,6)\}$ };
	%\end{scope}
	%\begin{scope}[xshift=37.5cm,yshift=0cm]
	\begin{scope}[xshift=27cm,yshift=0cm]
		\draw (0,-1)--(0,0);
		\draw (-1,-1) grid (0,5);
		\draw (0,0) grid (1,5);
		\draw (1,1) grid (2,5);
		\draw (2,2) grid (3,5);
		\draw (3,3) grid (4,5);
		\draw (4,4) grid (5,5);
		\draw (5,5) grid (6,5);
		\draw[line width=1.5pt,opacity=1] (-1,-2)--(-1,1)--(0,1) 
		--(0,3)--(2,3)--(2,4)--(3,4)--(3,5)--(6,5);
		\fill[line width=1.7pt,opacity=0.3,cyan] 
		(4,4)--(5,4)--(5,4.8)--(6,4.8)--(6,5.25)--(4,5.25);
		\draw[line width=1.7pt,opacity=0.8,gray,yshift=0.1cm,xshift=0cm] 
		(3.9,5)--(6,5);
		\node at (4.5,5.8){\color{gray}$\overline \pi_4$};
	\end{scope}
	
\end{tikzpicture}
	\caption{The path $\pi=\U^3\D\U^2\D^2\U\D\U\D^3$
		and the subpaths $\overline\pa_{i}$ defined in \eqref{equ:pi}. }
	\label{figci}
\end{figure}

\begin{lemma}
	Let  $\pi=\U^{a_1}\D^{b_1}\ldots\U^{a_\ell}\D^{b_\ell}$ be a Dyck path 
	in $\dP{n+1}$. 
	\begin{enumerate}[(i)]
		\item If $\pi$ is strongly irreducible with 
		$\pi\not=\U^a(\D\U)^{n+1-a}\D^{a}$, 
		% for any $1\leq{a}\leq{n+1}$, 
		the sets in the union \eqref{lcovers} are pairwise disjoint.
		\item If  $\pi=\U^a(\D\U)^{n+1-a}\D^{a}$ with  $2<a<n+1$ then 
		the sets in the union \eqref{lcovers} are pairwise disjoint
		except for $\lcov{1}{\pa}$ and $\lcov{\ell}{\pa}$
		which both contain the path $\U^{a}(\U\D)^{n-a}\D^{a}$.
	\end{enumerate}
	\label{lem:ULi_disjoint}  
\end{lemma}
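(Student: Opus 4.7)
The strategy is to invoke Lemma \ref{Lemma:double_cover}, which characterises exactly when two distinct bounce deletions of $\pi$ produce the same path, and then to use strong irreducibility to show that such a coincidence can only arise for a very restricted class of paths.

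Suppose $\pi' \in \lcov{i}{\pi} \cap \lcov{j}{\pi}$ for some $i < j$. Then there exist indices $i' < j'$ with $\U_{i'}$ in the $i$-th ascent and $\U_{j'}$ in the $j$-th ascent, together with $k_{i'}\in\{i'-1,i'\}$ and $k_{j'}\in\{j'-1,j'\}$, such that $\de{i'}{k_{i'}}(\pi) = \de{j'}{k_{j'}}(\pi) = \pi'$. Case \refi{double_cover:normal} of Lemma \ref{Lemma:double_cover} would place $\U_{i'}$ and $\U_{j'}$ in the same ascent, contradicting $i < j$; so case \refi{double_cover:exception} must apply. Consequently the segment $\sigma$ of $\pi$ running from $\U_{i'}$ to $\D_{k_{j'}}$ has the form $\U^r(\U\D)^{p}\D^t$ with $p > 0$, it starts at height $h_0 = i' - k_{i'} \in \{0,1\}$, and it ends at height $h_1 = j' - k_{j'} \in \{0,1\}$.

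The crucial observation is that a strongly irreducible Dyck path of semilength $n+1$ attains height at most $1$ only at the positions $0$, $1$, $2n+1$, and $2n+2$. Because $\sigma$ opens with an up-step, its starting position cannot be $2n+1$ (whose outgoing step is $\D_{n+1}$); because $\sigma$ closes with a down-step, its ending position cannot be $1$ (whose preceding step is $\U_1$). Hence the start of $\sigma$ lies in $\{0,1\}$ and its end lies in $\{2n+1, 2n+2\}$, forcing $\U_{i'}\in\{\U_1,\U_2\}$ and $\D_{k_{j'}}\in\{\D_n,\D_{n+1}\}$, and in either of the last two cases $j' = n+1$. Consequently $\U_{i'}$ belongs to the first ascent (so $i=1$) and $\U_{j'}=\U_{n+1}$ belongs to the last ascent (so $j=\ell$). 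In each of the four possibilities for $(h_0,h_1)$ one has $\pi \in \{\sigma,\ \U_1\sigma,\ \sigma\D_{n+1},\ \U_1\sigma\D_{n+1}\}$; imposing the Dyck condition that $\pi$ has equal numbers of $\U$- and $\D$-steps collapses each case to $\pi = \U^a(\U\D)^{q}\D^a$ for some $a\ge 2$ and $q\ge 1$, and the identity $\U^a(\U\D)^{q}\D^a = \U^{a+1}(\D\U)^{q-1}\D^{a+1}$ identifies $\pi$ with the exceptional path of the lemma. This proves (i) and reduces (ii) to exhibiting a single element of $\lcov{1}{\pi}\cap\lcov{\ell}{\pi}$.

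To finish claim (ii) a direct calculation on $\pi=\U^a(\D\U)^{n+1-a}\D^a$ verifies that both $\de{1}{1}$ (removing $\U_1$ and $\D_1$) and $\de{n+1}{n+1}$ (removing $\U_{n+1}$ and $\D_{n+1}$) yield the same path in $\dP{n}$, which is the common element asserted in the statement. The main technical hurdle is the bookkeeping in the four-way case analysis in the middle paragraph: one must check uniformly that the Dyck-balance equations among $r, p, t$ force $\pi$ into the exceptional form, regardless of whether $\sigma$ is flanked by $\U_1$, by $\D_{n+1}$, by both, or by neither.
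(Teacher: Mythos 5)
Your proof is correct and takes essentially the same route as the paper's: both arguments reduce the coincidence of two deletions to Lemma~\ref{Lemma:double_cover}\refi{double_cover:exception} and then use strong irreducibility to force the prefix $\alpha$ into $\{\varepsilon,\U\}$ and the suffix $\beta$ into $\{\varepsilon,\D\}$, so that $\pi$ collapses to the exceptional path; you merely spell out the four-way balance check and the explicit verification of (ii) that the paper compresses into ``The claims follow.'' One small remark: your computation $\de{1}{1}(\pi)=\de{n+1}{n+1}(\pi)$ correctly produces $\U^{a}(\D\U)^{n-a}\D^{a}$, which is not literally the expression $\U^{a}(\U\D)^{n-a}\D^{a}$ printed in the lemma --- this discrepancy is an off-by-one typo in the statement, not a flaw in your argument.
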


\begin{proof}
	Let $i<j$ and suppose that $\pi'\in\lcov{i}{\pa}\cap\lcov{j}{\pa}$.
	Then $\pi'$ can be obtained from $\pi$ by two bounce deletions $\de{I}{k_I}$ and $\de{J}{k_J}$ such that the $I$-th and $J$-th $\U$-steps of $\pi$ do not belong to the same acsent of $\pi$.
	By Lemma~\ref{Lemma:double_cover} this implies that $\pi$ is of the form $\pi=\alpha U^r(DU)^{\ell}D^t\beta$ for some words $\alpha,\beta\in\{\U,\D\}^*$, and $r,\ell,t\in\NN$ with $\ell>0$, such that $\alpha$ ends at height $I-k_I \in\{0,1\}$, and $\beta$ starts at height $J-k_J \in\{0,1\}$.
	If $\pi$ is strongly irreducible
	then  the only possibilities for the words $\alpha, \beta$ are
	$\alpha=\U$ if $k_I=I-1$  or $\alpha$ is empty if $k_I=I$.
	Similarly $\beta=\D$ if $k_J=J-1$ and $\beta$ is empty if $k_J=J$.
	The claims follow.
\end{proof}

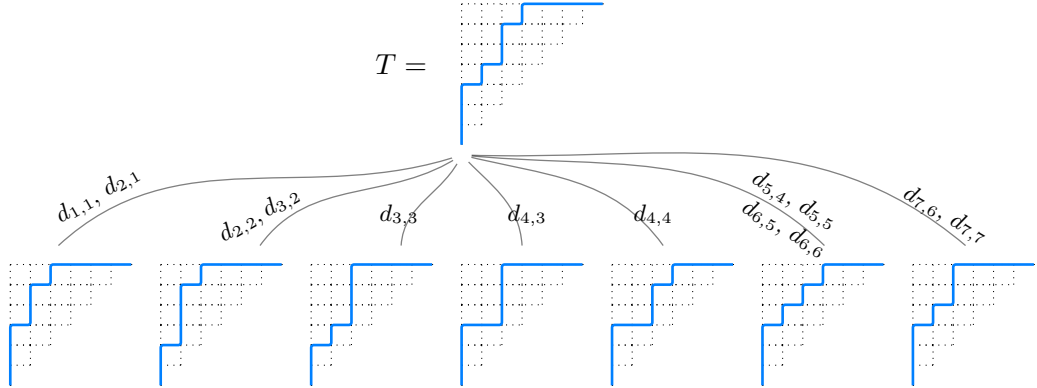
\begin{figure}
	\centering
\begin{tikzpicture}[scale=0.4]
	\begin{scope}[xshift=15cm,yshift=1cm,scale=0.67]
		\draw[dotted] (0, 0) grid (0, 1);
		\draw[dotted] (0, 1) grid (1, 2);
		\draw[dotted] (0, 2) grid (2, 3);
		\draw[dotted] (0, 3) grid (3, 4);
		\draw[dotted] (0, 4) grid (4, 5);
		\draw[dotted] (0, 5) grid (5, 6);
		\draw[dotted] (0, 6) grid (6, 7);
		\draw[dotted] (6,7) grid (7, 7);
		%    \draw (0,0) -- (7,7);
		\draw[rounded corners=1, color=azu, line width=1] 
		(0,0)--(0,3)--(1,3)--(1,4)--(2,4)--(2,6)--(3,6)--(3,7)--(7,7);
		
		\draw[line width=0.5pt,color=black,opacity=0.5](0,-0.5)to[bend right,out=6](-20,-5);  
		\draw[line width=0.5pt,color=black,opacity=0.5](0,-0.5)to[bend right,out=10](-10,-5);  
		\draw[line width=0.5pt,color=black,opacity=0.5](0,-0.5)to[bend right,out=10](-3,-5);  
		\draw[line width=0.5pt,color=black,opacity=0.5](0,-0.5)to[bend left,out=10](3,-5);  
		\draw[line width=0.5pt,color=black,opacity=0.5](0,-0.5)to[bend left,out=10](10,-5);  
		\draw[line width=0.5pt,color=black,opacity=0.5](0,-0.5)to[bend left,out=8](18,-5);  
		\draw[line width=0.5pt,color=black,opacity=0.5](0,-0.5)to[bend left,out=8](25,-5); 
		\fill[white](0,-0.5)circle (0.5cm); 
		\node at (-3,4){$T=$};
		\node at (-18,-2.5){\fs \rotatebox{30}{$\de{1}{1},\,\de{2}{1}$}};
		\node at (-10,-3.5){\fs \rotatebox{30}{$\de{2}{2},\de{3}{2}$}};
		\node at (-3.1,-3.5){\fs \rotatebox{0}{$\de{3}{3}$}};
		\node at (3.25,-3.5){\fs \rotatebox{0}{$\de{4}{3}$}};
		\node at (9.5,-3.5){\fs \rotatebox{0}{$\de{4}{4}$}};
		\node at (16.5,-2.8){\fs \rotatebox{-30}{$\de{5}{4},\,\de{5}{5}$}};
		\node at (16,-4.2){\fs \rotatebox{-30}{$\de{6}{5},\,\de{6}{6}$}};
		\node at (24,-3.5){\fs \rotatebox{-30}{$\de{7}{6},\,\de{7}{7}$}};
		
	\end{scope}  
	\begin{scope}[xshift=0cm,yshift=-7cm,scale=0.67]
		\draw[dotted] (0, 0) grid (0, 1);
		\draw[dotted] (0, 1) grid (1, 2);
		\draw[dotted] (0, 2) grid (2, 3);
		\draw[dotted] (0, 3) grid (3, 4);
		\draw[dotted] (0, 4) grid (4, 5);
		\draw[dotted] (0, 5) grid (5, 6);
		\draw[dotted] (5,6) grid (6,6);
		\draw[rounded corners=1, color=azu, line width=1.00000000000000] 
		(0,0)--(0,3)--(1,3)--(1,5)--(2,5)--(2,6)--(6,6);
	\end{scope}
	\begin{scope}[xshift=5cm,yshift=-7cm,scale=0.67]
		\draw[dotted] (0, 0) grid (0, 1);
		\draw[dotted] (0, 1) grid (1, 2);
		\draw[dotted] (0, 2) grid (2, 3);
		\draw[dotted] (0, 3) grid (3, 4);
		\draw[dotted] (0, 4) grid (4, 5);
		\draw[dotted] (0, 5) grid (5, 6);
		\draw[dotted] (5,6) grid (6,6);
		%    \draw (0,0) -- (7,7);
		\draw[rounded corners=1, color=azu, line width=1.00000000000000] 
		(0,0)--(0,2)--(1,2)--(1,5)--(2,5)--(2,6)--(6,6);
	\end{scope}
	\begin{scope}[xshift=10cm,yshift=-7cm,scale=0.67]
		\draw[dotted] (0, 0) grid (0, 1);
		\draw[dotted] (0, 1) grid (1, 2);
		\draw[dotted] (0, 2) grid (2, 3);
		\draw[dotted] (0, 3) grid (3, 4);
		\draw[dotted] (0, 4) grid (4, 5);
		\draw[dotted] (0, 5) grid (5, 6);
		\draw[dotted] (5,6) grid (6,6);
		\draw[rounded corners=1, color=azu, line width=1.00] 
		(0,0)--(0,2)--(1,2)--(1,3)--(2,3)--(2,6)--(6,6);
	\end{scope}
	\begin{scope}[xshift=20cm,yshift=-7cm,scale=0.67]
		\draw[dotted] (0, 0) grid (0, 1);
		\draw[dotted] (0, 1) grid (1, 2);
		\draw[dotted] (0, 2) grid (2, 3);
		\draw[dotted] (0, 3) grid (3, 4);
		\draw[dotted] (0, 4) grid (4, 5);
		\draw[dotted] (0, 5) grid (5, 6);
		\draw[dotted] (5,6) grid (6,6);
		\draw[rounded corners=1, color=azu, line width=1] 
		(0,0)--(0,3)--(2,3)--(2,5)--(3,5)--(3,6)--(6,6);
	\end{scope}
	\begin{scope}[xshift=15cm,yshift=-7cm,scale=0.67]
		\draw[dotted] (0, 0) grid (0, 1);
		\draw[dotted] (0, 1) grid (1, 2);
		\draw[dotted] (0, 2) grid (2, 3);
		\draw[dotted] (0, 3) grid (3, 4);
		\draw[dotted] (0, 4) grid (4, 5);
		\draw[dotted] (0, 5) grid (5, 6);
		\draw[dotted] (5,6) grid (6,6);
		\draw[rounded corners=1, color=azu, line width=1] 
		(0,0)--(0,3)--(2,3)--(2,6)--(6,6);
	\end{scope}
	\begin{scope}[xshift=25cm,yshift=-7cm,scale=0.67]
		\draw[dotted] (0, 0) grid (0, 1);
		\draw[dotted] (0, 1) grid (1, 2);
		\draw[dotted] (0, 2) grid (2, 3);
		\draw[dotted] (0, 3) grid (3, 4);
		\draw[dotted] (0, 4) grid (4, 5);
		\draw[dotted] (0, 5) grid (5, 6);
		\draw[dotted] (5,6) grid (6,6);
		\draw[rounded corners=1, color=azu, line width=1] 
		(0,0)--(0,3)--(1,3)--(1,4)--(2,4)--(2,5)--(3,5)--(3,6)--(6,6);
	\end{scope}
	\begin{scope}[xshift=30cm,yshift=-7cm,scale=0.67]
		\draw[dotted] (0, 0) grid (0, 1);
		\draw[dotted] (0, 1) grid (1, 2);
		\draw[dotted] (0, 2) grid (2, 3);
		\draw[dotted] (0, 3) grid (3, 4);
		\draw[dotted] (0, 4) grid (4, 5);
		\draw[dotted] (0, 5) grid (5, 6);
		\draw[dotted] (5,6) grid (6,6);
		\draw[rounded corners=1, color=azu, line width=1.00000000000000] 
		(0,0)--(0,3)--(1,3)--(1,4)--(2,4)--(2,6)--(6,6);
	\end{scope}
	
\end{tikzpicture}    

	\caption{The tableau $T$ corresponding to the path $\pa=\U\U\U\D\U\D\U\U\D\U\D\D\D\D$ and  its lower covers.
		Since $\pa$ is irreducible and  has four ascents, its lower covers are divided into 
		four pairwise disjoint sets $\lcov{i}{\pa}$ as follows. 
		The first tree lower covers belong to $\lcov{1}{\pa}$, the next two in $\lcov{2}{\pa}$, 	
		the next in $\lcov{3}{\pa}$ and the last one in $\lcov{4}{\pa}$. 
	}
	\label{fig:covers}
\end{figure}

Using the above lemma, we can prove the following proposition,
which counts the number of lower covers of all strongly irreducible Dyck paths.
\newpage 

\begin{theorem}
	\label{theor:peak_valley}
	\begin{enumerate}[(i)]
		\item[] 
		\item 
		If $\pi\in\dP{n+1}$ is strongly irreducible, the number of its lower covers 
		is     $$|\lcov{}{\pi}|=|peaks(\pi)| +|valleys(\pi)|$$
		unless $\pi=\U^a(\D\U)^{n+1-a}\D^{a}$ for some $a>2$
		in which case 
		$$|\lcov{}{\pi}|=|peaks(\pi)| +|valleys(\pi)|-1=2(n+1-a).$$
		\item  If $\pa=(\U\D)^{n+1}$ or $\pa=\U^{n+1}\D^{n+1}$ then 
		$|\lcov{}{\pa}|=1$.
		\item 
		If $\pa=\U(\U\D)^{n}\D$ and $n>0$ then $|\lcov{}{\pa}|=n$.
	\end{enumerate}
	\label{theor:UCclosedfor} 
\end{theorem}
\begin{proof}
	To prove (i), recall that all  Dyck paths in $\lcov{i}{\pa}$ are 
	obtained from $\pa$ by replacing $U^{a_i}$ by $\U^{a_i-1}$  and deleting a
	$\D$-step from $\overline \pa_{i}$. The different ways to delete 
	a $\D$-step from $\overline\pa_i$
	are  as many as the ways to 
	delete a $\D$-step from a different descent of $\overline\pa_i$,  which implies that 
	$|\lcov{i}{\pi}|=|{descents}(\overline\pa_{i})|$. 
	Therefore, if  $\pi\in\dP{n+1}$ is strongly irreducible and  $\pi\neq \U^a(\D\U)^{n+1-a}\D^{a}$, in view of Lemma \ref{lem:ULi_disjoint}\two{(i)}, 
	we have that   $|\lcov{}{\pa}|=  \sum_{i=1}^{\ell}|\lcov{i}{\pa}|=
	\sum_{i=1}^{\ell}|descents(\overline\pa_i)|$. 
	Next, notice that for all pairs $i,j$ we have $\overline \pa_i\cap 
	\overline\pa_j=\emptyset$ except for $j=i+1$ where 
	$\overline\pa_i\cap\overline\pa_{i+1}=
	\pa\cap\col{a_1+\cdots+a_i}$. 
	In other words, $\overline \pa_i, \overline \pa_{i+1}$
	share a column and thus a descent. 
	We can therefore rewrite the above sum as
	$|\lcov{}{\pa}|= |descents(\pa)|+ \ell-1=2\ell-1=|peaks(\pi)| +|valleys(\pi)|$.
	In the case where $\pi=\U^a(\D\U)^{n+1-a}\D^{a}$ with $a>2$, 
		  Lemma \ref{lem:ULi_disjoint}(ii) implies that 
		 $|\lcov{}{\pa}|=  \sum_{i=1}^{\ell}|\lcov{i}{\pa}|-1$ which, in turn, 
		 implies the second part of (i). 
	
	For (ii), it is immediate to see that the unique lower cover of $(\U\D)^{n+1}$
	and $\U^{n+1}\D^{n+1}$  is $(\U\D)^n$ and $\U^n\D^n$ respectively. 
	For (iii), we leave it to the reader to check that the lower covers of 
	$\pa=\U(\U\D)^n\D$ are  the paths obtained from 
	$\pa$ %when deleting a $\D\U$ 
	by replacing a subword $\U\D\U\D$ by $\D\U$,
	or the path $\U(\U\D)^{n-1}\D$, which are altogether $n$. 
\end{proof}

An {\em irreducible decomposition}  
%$\pa=\pa_1\oplus\pa_2\oplus\cdots\oplus\pa_k$
of a Dyck path $\pi$ is the unique way to write $\pa$ as a concatenation of Dyck paths 
$\pa_i$, where each $\pa_i$ is either an irreducible Dyck path
or a, possibly empty\footnote{
	i.e., we     count the  common endpoint between two consecutive 
	irreducible components as a component itself.
		We refer to it as an {\em trivial } component.}, maximal length sequence of peaks  at height $1$.
	In the first case we 
	say that   $\pi_i$ is an {\em irreducible component} 
	and in the latter,  a {\em connecting component} of $\pa$. 
	Adopting the symbol $\oplus$ for concatenation of paths, we write 
	\begin{align}
		\pa=\pa_1\oplus\pa_2\oplus\cdots\oplus\pa_k. \label{dec}
	\end{align}	
  Notice that, unless $\pa= (\U\D)^{n+1}$, the odd indexed components in 
\eqref{dec} are irreducible while the even indexed ones are connecting. 
If $\pa_1,\pa_3$ are irreducible Dyck paths 
and $\pa_2$ is a sequence of $k$ peaks at height 1, 
we abbreviate $\pa_1\oplus\pa_2\oplus\pa_3$ as 
$\pa_1\oplus_{k}\pa_3$ (see Figure \ref{fig:decomp}). 

Next, we introduce the symbol $\oplus'$ to denote concatenation of paths whose unique
 common point lies on $y=x+1$. More precisely, a {\em strongly irreducible decomposition} of an  {\em irreducible} Dyck path $\pi$
is the unique way to write 
\begin{align}
\pi=\U\pa_1\oplus'\pa_2\oplus'\cdots\oplus'\pa_k\D \label{sdec}	
\end{align}
as a concatenation of paths $\pa_i$, 
where each $\U\pa_i\D$ is either  a strongly irreducible Dyck path  or a
non empty  sequence of  peaks  at height $2$
(see Figure  \ref{fig:decomp2}). As before, we separate 
the components  of a strongly irreducible decomposition  
to {\em strongly irreducible} and 
{\em  connecting} ones and we note that, unless $\pa= \U(\U\D)^{n}\D$, the odd indexed components in \eqref{sdec} are irreducible while the even indexed ones are connecting. 
Finally, we  abbreviate $\U\pa_1\oplus'\pa_2\oplus'\pa_3\D$\; to\; $\U\pa_1\oplus_{k}'\pa_3\D$  
in the case where $\U\pa_2\D$ is a sequence of $k$ peaks at height 2.

\begin{figure}
\begin{tikzpicture}[scale=0.3]
	\begin{scope}[xshift=0cm,yshift=0cm,scale=0.75]
		\draw[dotted] (0, 0) grid (0, 1);
		\draw[dotted] (0, 1) grid (1, 2);
		\draw[dotted] (0, 2) grid (2, 3);
		\draw[dotted] (0, 3) grid (3, 4);
		\draw[dotted] (0, 4) grid (4, 5);
		\draw[dotted] (0, 5) grid (5, 6);
		\draw[dotted] (0, 6) grid (6, 7);
		\draw[dotted] (0, 7) grid (7,8);
		\draw[dotted] (0, 8) grid (8,9);
		\draw[dotted] (0, 9) grid (9,10);
		\draw[dotted] (0, 10) grid (10,11);
		\draw[dotted] (0, 11) grid (11,12);
		\draw[dotted] (0, 12) grid (12,13);
		\draw[dotted] (0, 13) grid (13,14);
		\draw[dotted] (0, 14) grid (14,15);
		\draw[dotted] (0, 15) grid (15,16);
		\draw[rounded corners=1, color=azu, line width=1.00000000000000] 
		(0,0)--(0,3)--(1,3)--(1,4)--(4,4)--(4,5)--(5,5)--(5,6)--(6,6)--(6,9)--(7,9)--
		(7,11)--(10,11)--(10,12)--(11,12)--(11,13)--(12,13)--(13,13)--(13,16)--(16,16);
		\node at (11.5,8.5){=};
		\fill[color=azu,opacity=0.8] (4,4) circle (6pt);
		\fill[color=azu,opacity=0.8] (6,6) circle (6pt);
		\fill[color=azu,opacity=0.8] (13,13) circle (6pt);
		%\node at (-1.8,8.5){$\pa=$};
	\end{scope}
	\begin{scope}[xshift=12cm,yshift=5cm]
		\draw[dotted] (0, 0) grid (0, 1);
		\draw[dotted] (0, 1) grid (1, 2);
		\draw[dotted] (0, 2) grid (2, 3);
		\draw[dotted] (0, 3) grid (3, 4);
		\draw[rounded corners=1, color=azu, line width=1.00000000000000] 
		(0,0)--(0,3)--(1,3)--(1,4)--(4,4);
		\node at (4,2){$\oplus$};
	\end{scope}
	\begin{scope}[xshift=17.5cm,yshift=6cm,scale=0.9]
		\draw[dotted] (0, 0) grid (0, 1);
		\draw[dotted] (0, 1) grid (1, 2);
		%\draw[dotted] (0, 2) grid (2, 3);
		%\draw[dotted] (0, 3) grid (3, 4);
		\draw[rounded corners=1, color=azu, line width=1.00000000000000] 
		(0,0)--(0,1)--(1,1)--(1,2)--(2,2);
		\node at (3,1){$\oplus$};
	\end{scope}
	
	\begin{scope}[xshift=22cm,yshift=4cm,scale=0.9]
		\draw[dotted] (0, 0) grid (0, 1);
		\draw[dotted] (0, 1) grid (1, 2);
		\draw[dotted] (0, 2) grid (2, 3);
		\draw[dotted] (0, 3) grid (3, 4);
		\draw[dotted] (0, 4) grid (4, 5);
		\draw[dotted] (0, 5) grid (5, 6);
		\draw[dotted] (0, 6) grid (6, 7);
		%\draw[dotted] (0, 7) grid (7,8);
		%\draw[dotted] (0, 8) grid (8,9);
		%\draw[dotted] (0, 9) grid (9,10);
		\draw[rounded corners=1, color=azu, line width=1.00000000000000] 
		(0,0)--(0,3)--(1,3)--(1,5)--(4,5)--(4,6)--(5,6)--(5,7)--(7,7);
		
		%\node at (9,5){=};
		\node at (6,3.2){$\oplus$};
		\node at (9,3.2){$\oplus$};
		\fill[color=azu,opacity=0.8] (7.5,3.2) circle (6pt);
	\end{scope}
	\begin{scope}[xshift=32cm,yshift=5.5cm,scale=0.8]
		\draw[dotted] (0, 0) grid (0, 1);
		\draw[dotted] (0, 1) grid (1, 2);
		\draw[dotted] (0, 2) grid (2, 3);
		%\draw[dotted] (0, 3) grid (3, 4);
		\draw[rounded corners=1, color=azu, line width=1.00000000000000] 
		(0,0)--(0,3)--(3,3);
		\node at (4,1.5){$=$};
	\end{scope}

	%\begin{tikzpicture}[scale=0.3]
	\begin{scope}[xshift=37cm,yshift=5cm]
		\draw[dotted] (0, 0) grid (0, 1);
		\draw[dotted] (0, 1) grid (1, 2);
		\draw[dotted] (0, 2) grid (2, 3);
		\draw[dotted] (0, 3) grid (3, 4);
		\draw[rounded corners=1, color=azu, line width=1.00000000000000] 
		(0,0)--(0,3)--(1,3)--(1,4)--(4,4);
		\node at (4,2){$\oplus_2$};
	\end{scope}
	%\begin{scope}[xshift=16.5cm,yshift=6cm,scale=0.8]
	%\draw[dotted] (0, 0) grid (0, 1);
	%\draw[dotted] (0, 1) grid (1, 2);
	%%\draw[dotted] (0, 2) grid (2, 3);
	%%\draw[dotted] (0, 3) grid (3, 4);
	%\draw[rounded corners=1, color=azu, line width=1.00000000000000] 
	%(0,0)--(0,1)--(1,1)--(1,2)--(2,2);
	%\node at (3,1){$\oplus$};
	%\end{scope}
	%
	\begin{scope}[xshift=43cm,yshift=4cm,scale=0.8]
		\draw[dotted] (0, 0) grid (0, 1);
		\draw[dotted] (0, 1) grid (1, 2);
		\draw[dotted] (0, 2) grid (2, 3);
		\draw[dotted] (0, 3) grid (3, 4);
		\draw[dotted] (0, 4) grid (4, 5);
		\draw[dotted] (0, 5) grid (5, 6);
		\draw[dotted] (0, 6) grid (6, 7);
		%\draw[dotted] (0, 7) grid (7,8);
		%\draw[dotted] (0, 8) grid (8,9);
		%\draw[dotted] (0, 9) grid (9,10);
		\draw[rounded corners=1, color=azu, line width=1.00000000000000] 
		(0,0)--(0,3)--(1,3)--(1,5)--(4,5)--(4,6)--(5,6)--(5,7)--(7,7);
		
		%\node at (9,5){=};
		\node at (7,3.5){$\oplus_0$};
		%\node at (9,3.5){$\oplus$};
		%\fill[color=azu,opacity=0.8] (7.5,3.5) circle (6pt);
	\end{scope}
	\begin{scope}[xshift=50cm,yshift=5.5cm,scale=0.8]
		\draw[dotted] (0, 0) grid (0, 1);
		\draw[dotted] (0, 1) grid (1, 2);
		\draw[dotted] (0, 2) grid (2, 3);
		%\draw[dotted] (0, 3) grid (3, 4);
		\draw[rounded corners=1, color=azu, line width=1.00000000000000] 
		(0,0)--(0,3)--(3,3);
	\end{scope}
	%
	%\begin{scope}[xshift=36cm,yshift=5cm]
	%\draw[dotted] (0, 0) grid (0, 1);
	%\draw[dotted] (0, 1) grid (1, 2);
	%\draw[dotted] (0, 2) grid (2, 3);
	%\draw[dotted] (0, 3) grid (3, 4);
	%\draw[rounded corners=1, color=azu, line width=1.00000000000000] 
	%(0,0)--(0,3)--(1,3)--(1,4)--(4,4);
	%\node at (5,2){$\oplus_2$};
	%\end{scope}
	%\begin{scope}[xshift=44cm,yshift=2cm]
	%\draw[dotted] (0, 0) grid (0, 1);
	%\draw[dotted] (0, 1) grid (1, 2);
	%\draw[dotted] (0, 2) grid (2, 3);
	%\draw[dotted] (0, 3) grid (3, 4);
	%\draw[dotted] (0, 4) grid (4, 5);
	%\draw[dotted] (0, 5) grid (5, 6);
	%\draw[dotted] (0, 6) grid (6, 7);
	%\draw[dotted] (0, 7) grid (7,8);
	%\draw[dotted] (0, 8) grid (8,9);
	%\draw[dotted] (0, 9) grid (9,10);
	%\draw[rounded corners=1, color=azu, line width=1.00000000000000] 
	%(0,0)--(0,3)--(1,3)--(1,5)--(4,5)--(4,6)--(5,6)--(5,7)--(6,7)--(6,8)--(7,8)--
	%(7,10)--(10,10);
	%
	%\end{scope}
\end{tikzpicture}
	\caption{The irreducible decomposition of a path. 
		The second and fourth components are connecting components while all others are irreducible components.
	}
	\label{fig:decomp}  
\end{figure}
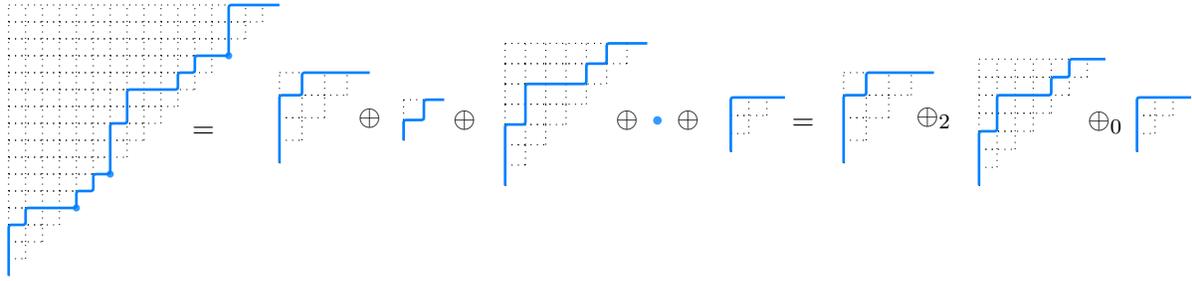

\begin{figure}
\begin{tikzpicture}[scale=0.3]
	\begin{scope}[xshift=0cm,yshift=2cm]
		\draw[dotted] (0, 0) grid (0, 1);
		\draw[dotted] (0, 1) grid (1, 2);
		\draw[dotted] (0, 2) grid (2, 3);
		\draw[dotted] (0, 3) grid (3, 4);
		\draw[dotted] (0, 4) grid (4, 5);
		\draw[dotted] (0, 5) grid (5, 6);
		\draw[dotted] (0, 6) grid (6, 7);
		\draw[dotted] (0, 7) grid (7,8);
		\draw[dotted] (0, 8) grid (8,9);
		\draw[dotted] (0, 9) grid (9,10);
		\draw[rounded corners=1, color=azu, line width=1.00000000000000] 
		(0,0)--(0,3)--(1,3)--(1,5)--(4,5)--(4,6)--(5,6)--(5,7)--(6,7)--(6,8)--(7,8)--
		(7,10)--(10,10);
		\node at (9,5){=};
		\draw[line width=7pt,opacity=0.1,rounded corners=0]
		(0,4.5)--(4.5,4.5)--(4.5,10);
		\fill[color=azu,opacity=0.7] (4,5) circle (5pt);
		\fill[color=azu,opacity=0.7] (7,8) circle (5pt);
	\end{scope}
	
	\begin{scope}[xshift=12cm,yshift=4cm]
		\draw[dotted] (0, 0) grid (0, 1);
		\draw[dotted] (0, 1) grid (1, 2);
		\draw[dotted] (0, 2) grid (2, 3);
		\draw[dotted] (0, 3) grid (3, 4);
		\draw[dotted] (0, 4) grid (4, 5);
		\draw[rounded corners=1, color=azu, line width=1.00000000000000] 
		(0,1)--(0,3)--(1,3)--(1,5)--(4,5);
		\draw[color=azu,dotted,line width=1.4](0,0)--(0,1);
		\draw[color=azu,dotted,line width=1.4](4,5)--(5.5,5);
		\node at (6,3){$\oplus'$};
	\end{scope}
	\begin{scope}[xshift=21cm,yshift=5cm]
		\draw[dotted] (0, 0) grid (0, 1);
		\draw[dotted] (0, 1) grid (1, 2);
		\draw[dotted] (0, 2) grid (2, 3);
		\draw[dotted] (0, 3) grid (3, 4);
		\draw[rounded corners=1, color=azu, line width=1.00000000000000] 
		(0,1)--(0,2)--(1,2)--(1,3)--(2,3)--(2,4)--(3,4);
		\draw[color=azu,dotted,line width=1.4](0,0)--(0,1);
		\draw[color=azu,dotted,line width=1.4](3,4)--(4,4);
		\node at (5,2){$\oplus'$};
	\end{scope}
	
	\begin{scope}[xshift=28cm,yshift=6cm]
		\draw[dotted] (0, 1) grid (1, 2);
		\draw[dotted] (0, 2) grid (2, 3);
		\draw[rounded corners=1, color=azu, line width=1.00000000000000] 
		(0,1)--(0,3)--(2,3);
		\draw[color=azu,dotted,line width=1.4](0,0)--(0,1);
		\draw[color=azu,dotted,line width=1.4](2,3)--(3,3);
		\node at (5,1){=};
	\end{scope}
	
	\begin{scope}[xshift=35cm,yshift=4cm]
		\draw[dotted] (0, 0) grid (0, 1);
		\draw[dotted] (0, 1) grid (1, 2);
		\draw[dotted] (0, 2) grid (2, 3);
		\draw[dotted] (0, 3) grid (3, 4);
		\draw[dotted] (0, 4) grid (4, 5);
		\draw[rounded corners=1, color=azu, line width=1.00000000000000] 
		(0,1)--(0,3)--(1,3)--(1,5)--(4,5);
		\draw[color=azu,dotted,line width=1.4](4,5)--(5.5,5);
		\draw[color=azu,dotted,line width=1.4](0,0)--(0,1);
		\node at (6,3){$\oplus_3'$};
	\end{scope}
	\begin{scope}[xshift=44cm,yshift=6cm]
		\draw[dotted] (0, 1) grid (1, 2);
		\draw[dotted] (0, 2) grid (2, 3);
		\draw[rounded corners=1, color=azu, line width=1.00000000000000] 
		(0,1)--(0,3)--(2,3);
		\draw[color=azu,dotted,line width=1.4](0,0)--(0,1);
		\draw[color=azu,dotted,line width=1.4](2,3)--(3,3);
	\end{scope}
\end{tikzpicture}
	\caption{Decomposition of an irreducible path into its  strongly 
		irreducible components.}
	\label{fig:decomp2}  
\end{figure}
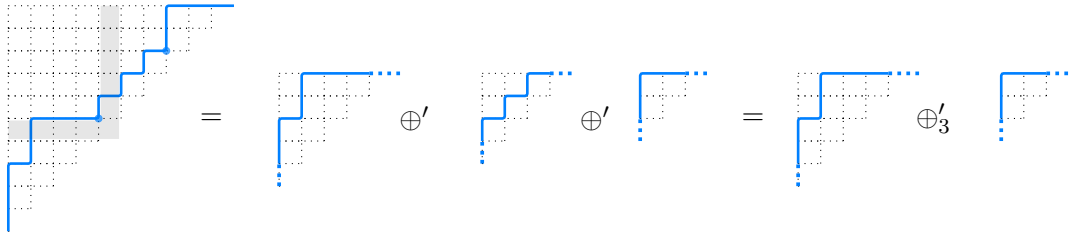

\begin{theorem}
Assume that $\pa\in \dP{n+1}$ with  $\pa\not \in \{(\U\D)^{n+1}, \U(\U\D)^{n}\D$\}. 
	\begin{enumerate}[(i)]
		\item  If $\pa$ is a Dyck path with  irreducible decomposition  
		$\pa=\pa_1\oplus\pa_2\oplus\cdots\oplus\pa_k$ then 
			\begin{align}
			|\lcov{}{\pa}|= \lfloor \tfrac{k}{2} \rfloor+
			\sum_{\substack{\pi_i\text{ irreducible}\\\text{component}     
			}}\!\!\!|\lcov{}{\pa_i}|.   
			\label{equ:ucdecom} 
		\end{align}
		\medskip 
		\item 
		If $\pa$ is an irreducible  Dyck path with strongly  irreducible 
		decomposition   $\pa=\U\pa_1\oplus'\cdots\oplus'\pa_{k}\D$
		then
		\begin{align}
		|\lcov{}{\pa}|&=
		k-1 +\sum_{i=1}^{k} |\lcov{}{\U\pi_i\D}|  \label{equ:str_irr}  \\
		&= 
		k-1 +\sum_{\substack{
				\U\pi_i\D\\
				\text{ strongly}\,\,\,\,\,\\
				\text{irreducible }}}\!\!\!\!\!\!  |\lcov{}{\U\pi_i\D}|+
		\sum_{\substack{\U\pi_i\D=\\\U(\U\D)^{k_i}\D}}  k_i. 
		\label{equ:str_irr2}
		\end{align}
	\end{enumerate}
\end{theorem}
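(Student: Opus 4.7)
Both parts follow the same strategy: classify each bounce deletion $\de{s}{k_s}$ of $\pa$ according to the component of the relevant decomposition it modifies, count the distinct lower covers contributed by each component, and check pairwise disjointness of these contributions.

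For part (i), I would first argue that the steps $\U_s$ and $\D_{k_s}$ deleted by any bounce deletion of $\pa$ either both lie in a common component $\pa_j$ of the irreducible decomposition, or sit at the junction of two consecutive components, in which case necessarily $k_s = s-1$ with $\D_{s-1}$ the final step of one component and $\U_s$ the initial step of the next. This is a short height-count: since the irreducible components are separated by touches of the diagonal $y=x$, placing $\U_s$ and $\D_{k_s}$ in different components would force the path between them to return to the diagonal, and comparing $\nup$ and $\ndown$ before and after each component boundary shows that this can only happen in the junction configuration above. The bounce deletions therefore split into three classes: (A) acting within an irreducible component $\pa_j$; (B) acting entirely within a non-empty connecting component $(\U\D)^{k_i}$; and (C) junction deletions between two consecutive components.

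For class (A), the deletions within $\pa_j$ are in bijection with the bounce deletions of $\pa_j$ viewed as an isolated Dyck path, and since the other components are untouched each of the $|\lcov{}{\pa_j}|$ lower covers of $\pa_j$ yields a distinct lower cover of $\pa$. For classes (B) and (C), Lemma~\ref{Lemma:double_cover}(ii) shows that any two bounce deletions acting in or across the same connecting component produce the identical resulting path (of the form $\alpha\,\U^r(\U\D)^{\ell}\D^t\,\beta$ with $\ell$ decreased by one), so each connecting component---empty or not---contributes exactly one distinct lower cover. Lower covers arising from different components are clearly distinct because they alter different segments of $\pa$. Summing over all components gives $|\lcov{}{\pa}| = k' + \sum_{\pa_j\ \text{irreducible}} |\lcov{}{\pa_j}|$.

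Part (ii) proceeds identically, one level higher: since $\pa$ is irreducible its strongly irreducible decomposition splits $\pa$ at the touches of the line $y=x+1$ (which now plays the role of the diagonal), and each $\U\pa_i\D$ is either strongly irreducible or of the form $\U(\U\D)^{k_i}\D$. The analogous classification and disjointness arguments yield $|\lcov{}{\pa}| = (k-1) + \sum_{i=1}^{k} |\lcov{}{\U\pa_i\D}|$, where $k-1$ counts the junctions between the $k$ components (no ``external'' junction appears because $\pa$ itself is irreducible). Formula \eqref{equ:str_irr2} then follows by substituting $|\lcov{}{\U(\U\D)^{k_i}\D}| = k_i$ (from Theorem~\ref{theor:UCclosedfor}(iii)) into the connecting-component terms. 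The main obstacle throughout is the classification step: one must verify via the height argument and Lemma~\ref{Lemma:double_cover} that a bounce deletion cannot non-trivially cross more than one component boundary; a secondary subtlety is the exceptional path $\U^a(\D\U)^{n+1-a}\D^a$ from Lemma~\ref{lem:ULi_disjoint}, which must be handled so that when such a path occurs as a component the contribution $|\lcov{}{\pa_j}|$ is counted correctly.
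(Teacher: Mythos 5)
Your proposal is correct and follows essentially the same route as the paper: classify each bounce deletion by the component(s) of the decomposition it affects, let each irreducible (resp.\ strongly irreducible) component contribute its own lower covers, and account separately for the connecting components and junctions, using Lemma~\ref{Lemma:double_cover} to control coincidences. If anything, your treatment of the junction deletions and of the exceptional paths $\U^a(\D\U)^{n+1-a}\D^a$ is slightly more explicit than the paper's rather terse argument.
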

\begin{proof}
	For (i), notice  that the lower covers of $\pa$ are obtained  by applying a bounce deletion according to  one of the  following ways:
	\begin{itemize}
		\item the deletion affects only an irreducible component,
		\item the deletion affects only a connecting component,
		\item the deletion eliminates the last $\D$-step of an irreducible  component and the first $\U$-step of the next (non-trivial connecting) component, 
		\item  the deletion eliminates the  $\D$-step and the  $\U$-step 
		between   a trivial component. 
			\end{itemize}
		In the first case we have a contribution of 
	$$ \sum_{\substack{\pi_i\text{ irreducible}\\\text{component}     
	}}\!\!\!|\lcov{}{\pa_i}|= 
\sum_{i=0}^{\lfloor \tfrac{k}{2}\rfloor} |\lcov{}{\pa_{2i+1}}|
$$
 lower covers. 
 In next two cases, since connecting components have a unique lower cover (Theorem \ref{theor:peak_valley}(ii)),
 the effect of the deletion on the original path $\pa$ is the same for each connecting component.   
 Therefore, if $k'$ is the number of  non-trivial connecting components, we have a contribution of $k'$ in the number of lower covers of $\pa$. 
 Finally, in the last case, we have to add to $|\lcov{}{\pa}|$ the number $k''$ of trivial  connecting components. Since connecting and irreducible components alternate,
    the number $k'+k''$
	of connecting components is $\left\lfloor \tfrac{k}{2}\right\rfloor$. 
	Hence, summing all the cases discussed above, we obtain  \eqref{equ:ucdecom}.

	For (ii), the lower covers of $\pa$ belong to one of the following 
	categories: either  they occur from bounce deletions on a single 
	component $\U\pa_{i}\D$ (the component being either strongly irreducible or of type $\U(\U\D)^{k_i}\D$)
	or they interact between two consecutive components 
	by deleting the last  $\U$-step of   $\U\pa_i\D$ and the first  $\D$-step of $\U\pa_{i+1}\D$
	(as the deletion shown in Figure \ref{fig:decomp2} on the left).
	More precisely, 
	if $(a-1,a)$ is the common endpoint between the two consecutive components 
	$\pa_i\oplus'\pa_{i+1}$ then $\de{a}{a}(\pa)$ is such  a lower cover. 
	The first case contributes $\sum_{i=1}^{k} |\lcov{}{\U\pi_i\D}|$ and 
	the second contributes $k-1$,  in the total number \eqref{equ:str_irr} of 
	lower covers  of $\lcov{}{\pa}$. 
	We can further refine \eqref{equ:str_irr} to  \eqref{equ:str_irr2}
	by distinguishing the type of each component $\pa_i$. 
\end{proof}

\subsection{Upper covers}
\label{ssec:ub}
Our next goal is the  computation of the upper covers of a Dyck path  
$\pa\in\dP{n+1}$. 
To do so,  we have to consider all the ways to  insert a 
$U$ and a $D$-step in $\pi$ so that the resulting path $\pa'$ is a Dyck 
path in $\dP{n+2}$ satisfying $\de{j}{k_j}(\pa')=\pa$ for some $j=1,\ldots,n+2$ and $k_j\in\{j-1,j\}$.
In view of the definition of $\de{j}{k_j}$, if 
the inserted $\U$ is the $j$-th $\U$-step of the new path 
$\pi'$ then the inserted $\D$ should be  either  the  $j$-th or the 
$(j-1)$-st $\D$-step of 
$\pa'$. 
Described more graphically, 
if $p=(x,j-1)$ is  the integer point
(other than  $(0,0)$ or $(n+1,n+1)$) where we insert the new $\U$-step, the range of possible insertions of the new  $\D$-step  is  the  subpath $\pa\cap\col{j-1}$  contained in the $(j-1)$-th column of  $\pa$. 
We can describe the above as a {\em bounce insertion}: we draw a 
horizontal line starting at the point $p$
at which we want to insert $\U$, 
until it hits the main  diagonal  and then we turn vertically up 
selecting the portion of the path contained in the   
column at which the horizontal line terminated. 
The selected subpath is the range of insertion of $\D$, i.e., if we insert a $\U$-step at $p$ then  we can insert a $\D$-step at any integer point of the above subpath  (see Figure  \ref{fig:bounce_insert}).  
If $p=(i,i)$, $1\leq{i}\leq{n}$, is on the main diagonal, then the bounce insertion described 
above reduces to selecting the portion of the path at  column $\col{i}$, i.e., $\pa\cap\col{i}$. 
Trivially,  if $p=(0,0)$ or $(n+1,n+1)$ the only possibilities are to insert $\U\D$ at $p$. 

\begin{figure}
  \begin{tikzpicture}[scale=0.32]
	\begin{scope}[xshift=0cm]
		\draw (0,0) grid (1,10);
		\draw (1,1) grid (2,10);
		\draw (2,2) grid (3,10);
		\draw (3,3) grid (4,10);
		\draw (4,4) grid (5,10);
		\draw (5,5) grid (6,10);
		\draw (6,6) grid (7,10);
		\draw (7,7) grid (8,10);
		\draw (8,8) grid (9,10);
		\draw (9,9) grid (10,10);
		
		\draw[line width=1.5pt,opacity=0.8]
		(0,-1)--(0,3)--(2,3)--(2,5)--(6,5)--(6,7)--(7,7)--(7,9)--(7,10)--(11,10);
		%  --(3,7)--(6,7)--(6,8)--(7,8)--(7,10)--(11,10);
		\fill[azu] (6,5)circle(7pt);   
		\draw[line width=2pt,azu,opacity=0.8] (5,5)--(6,5)--(6,7);
		
		%  \draw[line width=3pt,opacity=0.4,gray](0,2)--(3.05,2);    
		\draw[line width=10pt,opacity=0.4,gray](5.5,5)--(5.5,10);  
		
		\draw[<-,line width=1pt,color=azu] (6,4.7) to[bend left,out=-20] (9,3.2);
		\node at (9.5,3){\fs insert  $\U$};
		\node at (9.5,2){\fs at $(6,6)$};
		\draw[->,line width=1pt,color=azu] (5.5,0.3) to[bend left,out=-20] 
		(5.7,6.3);
		\node[text width=3.5cm] at (9.5,0){\fs $\pa\cap\col{6}:$ range of };
		\node[text width=3.5cm] at (9.5,-1){\fs  insertions of $\D$};
	\end{scope}
	\begin{scope}[xshift=16cm]
		\draw (0,0) grid (1,10);
		\draw (1,1) grid (2,10);
		\draw (2,2) grid (3,10);
		\draw (3,3) grid (4,10);
		\draw (4,4) grid (5,10);
		\draw (5,5) grid (6,10);
		\draw (6,6) grid (7,10);
		\draw (7,7) grid (8,10);
		\draw (8,8) grid (9,10);
		\draw (9,9) grid (10,10);
		
		\draw[line width=1.5pt,opacity=0.8]
		(0,-1)--(0,3)--(2,3)--(2,5)--(3,5)--(3,7)--(6,7)--(6,8)--(7,8)--(7,10)--(11,10);
		\fill[azu] (0,2)circle(7pt);   
		\draw[line width=2pt,azu,opacity=0.8] (2,3)--(2,5)--(3,5)--(3,7);
		
		\draw[line width=3pt,opacity=0.4,gray](0,2)--(3.05,2);    
		\draw[line width=10pt,opacity=0.4,gray](2.5,2)--(2.5,10);  
		
		\draw[<-,line width=1pt,color=azu] (0.2,1.9) to[bend left,out=-20] (3.5,0);
		\node at (5.5,0){\fs insert  $\U$};
		\node at (5.5,-1){\fs at $(0,3)$};
		
		\draw[->,line width=1pt,color=azu] (5,2) to[bend left,out=-20] 
		(2.5,4.8);
		\node[text width=3.5cm] at (10.5,3){\fs $\pa\cap\col{3}:$ range of };
		\node[text width=3.5cm] at (10.5,2){\fs  insertions of $\D$};
	\end{scope}
	\begin{scope}[xshift=32cm]
		\draw (0,0) grid (1,10);
		\draw (1,1) grid (2,10);
		\draw (2,2) grid (3,10);
		\draw (3,3) grid (4,10);
		\draw (4,4) grid (5,10);
		\draw (5,5) grid (6,10);
		\draw (6,6) grid (7,10);
		\draw (7,7) grid (8,10);
		\draw (8,8) grid (9,10);
		\draw (9,9) grid (10,10);
		
		\draw[line width=1.5pt,opacity=0.8]
		(0,-1)--(0,3)--(2,3)--(2,5)--(3,5)--(3,7)--(6,7)--(6,8)--(7,8)--(7,10)--(11,10);
		\draw[line width=2pt,azu,opacity=0.8](7,8)--(7,10)--(8,10);
		
		\fill[azu] (5,7)circle(7pt); 
		\draw[->,line width=1pt,color=azu] (4,2) to[bend left,out=-20] 
		(5,6.7);
		\node at (5,1.5){\fs insert  $\U$  };
		\node at (5,.5){\fs  at  $(5,8)$};
		
		\draw[line width=3pt,opacity=0.4,gray](5,7)--(8,7);    
		\draw[line width=10pt,opacity=0.4,gray](7.5,7)--(7.5,10);
		
		\draw[->,line width=1pt,color=azu] (9,5.6) to[bend right,out=20] 
		(7.5,9.8);
		\node[text width=3.5cm] at (12.5,5){\fs $\pa\cap\col{8}:$ range of };
		\node[text width=3.5cm] at (12.5,4){\fs  insertions of $\D$};
		%    \node[text width=3.5cm] at (13,3.5){\fs of $\D$};
	\end{scope}
	
	%  \begin{scope}[xshift=43cm]
	%  \draw (0,0) grid (1,10);
	%  \draw (1,1) grid (2,10);
	%  \draw (2,2) grid (3,10);
	%  \draw (3,3) grid (4,10);
	%  \draw (4,4) grid (5,10);
	%  \draw (5,5) grid (6,10);
	%  \draw (6,6) grid (7,10);
	%  \draw (7,7) grid (8,10);
	%  \draw (8,8) grid (9,10);
	%  \draw (9,9) grid (10,10);
	%  
	%  \draw[line width=1.5pt,opacity=0.8]
	%  (0,-1)--(0,3)--(2,3)--(2,5)--(3,5)--(3,7)--(6,7)--(6,8)--(7,8)--(7,10)--(11,10);
	%  \draw[line 
	%  width=2pt,azu,opacity=0.8](5,7)--(6,7)--(6,8)--(7,8)--(7,10)--(8,10);
	%
	%  \draw[line width=2pt,azu,opacity=0.8](3,5)--(3,7);
	%  
	%%  \draw[->,line width=1pt,color=azu] (4,2) to[bend left,out=-20](5,6.7);
	%%  \node at (5,1.5){\fs insert  $\U$};
	%  
	%%  \draw[line width=3pt,opacity=0.4,gray](5,7)--(8,7);    
	%  \draw[line width=10pt,opacity=0.4,gray](5.5,5)--(5.5,10);
	%  \draw[line width=10pt,opacity=0.4,gray](6.5,6)--(6.5,10);
	%  \draw[line width=10pt,opacity=0.4,gray](7.5,7)--(7.5,10);
	%  
	%  \draw[->,line width=1pt,color=azu] (9,3.6) to[bend right,out=20] 
	%  (7,7.8);
	%  \node[text width=3cm] at (12,3.4){\fs insert $\D$ in };
	%  \node[text width=3cm] at (12,2.5){\fs this  subpath};
	%  
	%\draw[->,line width=1pt,color=azu] (7,1.1) to[bend right,out=30] 
	%(3.2,6);
	%\node[text width=3cm] at (11,0.9){\fs insert $\U$ in };
	%\node[text width=3cm] at (11,0){\fs this  ascent};
	%
	%
	%  
	%  \end{scope}

\end{tikzpicture}
	\caption{Bounce insertions:
		if we add a $\U$-step at  $(x,j-1)\in\pa$ and a $\D$-step at any integer
		point of $\pa\cap{\col{j-1}}$, the resulting path $\pa'$
		satisfies $\de{j}{k_j}(\pa')=\pa$
		for some $k_j\in\{j-1,j\}$.   	}
	\label{fig:bounce_insert}  
\end{figure}
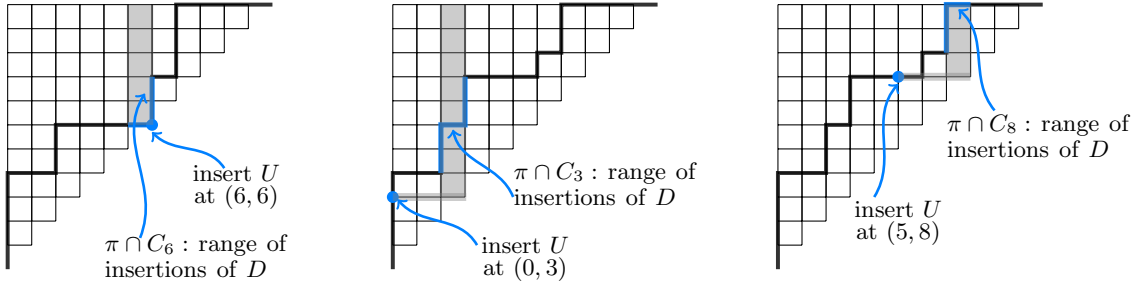
% 
% \et{
%  If $\pa$ is not irreducible, then each integer point $p\in \pa$ with $p=(j,j)$,  $1\leq{j}\leq{n}$,  yields two upper covers: 
%  the Dyck paths we get by inserting  $\U\D$ or  $\D\U$ at $p$.
%Finally, }
\smallskip 
%  
%  
% For the two endpoints of $\pa$, the only possible
% ways to insert a new $\D$ yields  the upper covers 
% $\U\D\pa$ and  $\pa\U\D$.  

We next assume that  $\pa$ is strongly irreducible and we group
all its upper covers  according to the $x$-coordinate of the integer  point 
where we insert the new $\U$-step.
This can be further partitioned into the case where 
we insert a new $\U$-step at an ascent $\U^{a_i}$ or the case where we 
insert a 
new $\U$-step between two consecutive $\D$-steps of a descent $\D^{b_i}$ of 
$\pa$. 
Altogether, we have the  following possibilities for the upper covers of 
$\pa$:
\medskip 

\noindent 
(i) 
$\ucov{1}{\pa}$ is the set of upper covers of  $\pa$ obtained by inserting the new $\U$-step at some integer point of    $\U^{a_1}$. 
In this case, the range of possible insertions of  the new  $\D$-step  is the set of all  integer points  of   the   subpath   of $\pa$ contained in the first $a_1$ columns, 
i.e., the subpath  $\pa\cap(\cup_{s=1}^{a_i}\col{s})=\overline \pa_1$.   \\
%  This leads to the following two possible types of $\pa'\in\ucov{1}{\pa}$: 
%    \begin{align}
%    \pa' &= \U^{k}{\bf\D}\U^{a_1{\bf+1}-k}\D^{b_1}
%    \ldots\U^{a_{\ell}}\D^{b_\ell}
%    \text{\; for some } 1\leq{k}\leq{a_1}, \;\;\;\;\;\;\; \text{ or }   
%    \label{uc1} \\
%    \pa' &= \underbrace{\U^{a_1{\bf +1}}
%      \overset{{\bf \text{\bf new } \D}}{\ldots\ldots\ldots}
%      }_{\overline \pa_1} \D_{\tau} 
%    \ldots 
%    \U^{a_\ell}\D^{b_\ell}
%    \text{\;\;\;\; with }  \tau=a_1+1     \label{uc11}.
%    \end{align}
\noindent 
(ii) $\ucov{i}{\pa}$, $2\leq{i}\leq{\ell}$,  is the set of upper covers of
$\pa$ obtained by inserting the new $\U$-step at an integer point of  $\U^{a_i}$.
Since the endpoints of $\U^{a_i}$ have coordinates 
$(b_1+\cdots+b_{i-1},a_1+\cdots+a_{i-1})$,   $(b_1+\cdots+b_{i-1},a_1+\cdots+a_{i})$ 
the range of possible insertions of the new $\D$-step is the set of all integer points
on the  subpath 
$\pa\cap(\cup_{s=0}^{a_i}\col{a_1+\cdots+a_{i-1}+s})=\overline \pa_i$. 
%  Next, notice that  the    $\D$-step, say    $\D_{\kappa}$,    immediately succeeding $\U^{a_i}$ 
%   in $\pa$ has index $\kappa=b_1+\cdots+b_{i-1}+1$ 
%   whereas all $\D_{j}$ in $\overline \pa_i$ have index 
%   $ a_1+\cdots+a_{i-1}\leq{j}\leq{a_1+\cdots+a_{i}}$. 
%  Since $\pa$ is strongly irreducible, 
%  we have the inequality $b_1+\cdots+b_{i-1}+1<a_1+\cdots+a_{i-1}$, which 
%  further implies 
%  that $\U_{\kappa}$ precedes $\overline \pa_i$. 
% Therefore, all $\pa'\in\ucov{i}{\pa}$ are obtained from $\pa$ by increasing 
% $\U^{a_i}$  to $\U^{a_i+1}$ and subsequently (after $\D_{\kappa}$) inserting a new  $\D$-step  at 
% some point   in  $\overline\pa_i$,   as shown below: 
%   \begin{align}
%     \pa' &=\U^{a_1}\D^{b_1}\ldots \U^{a_i+1} \D_{\kappa} 
%     \ldots 
%     \underbrace{
%       \overset{{\bf\text{\bf new } \D}}{\ldots\ldots\ldots\ldots}
%      }_{\overline \pa_i} \ldots 
%      \label{uci}
%    \end{align}
% 
%  \smallskip 

\noindent 
(iii) 
$\ucovv{i}{\pa}$, $1\leq{i}\leq{\ell}$, is the set of upper covers of
$\pa$ obtained by inserting a new $\U$-step   between  a double descent 
$\D\D$ of   $\D^{b_i}$. If $\D^{b_i}$ consists of a single descent, i.e., 
$b_i=1$, then clearly $\ucovv{i}{\pa}=\emptyset$. 
If $b_i>1$, let $(x_{\circ},y_{\circ})$
be the point between  a double descent   of $\D^{b_i}$. Clearly   $y_{\circ}=a_1+\cdots+a_i$ which further implies that, when inserting a new $\U$ at  $(x_{\circ},y_{\circ})$,
the range of all possible   insertions of the new $\D$-step is the set of all integer points on  the subpath $\pa\cap\col{a_1+\cdots+a_i}$ 
of $\pa$ contained in the $(a_1+\cdots+a_i)$-th column  of the underlying tableau. 

%  As before, let $\U_{\kappa}$ be the $\U$-step immediately succeeding $\D^{b_i}$. 
%  Clearly, $\U_{\kappa}$  lies on the vertical line $x=b_1+\cdots +b_i$, 
%  while all points in  $\pa\cap\col{a_1+\cdots+a_i}$ satisfy $x\geq a_1+\cdots+a_i-1$. 
%  Since $\pa$ is strongly irreducible, the inequality  $b_1+\cdots +b_i<  a_1+\cdots+a_i-1$
%  implies that   $\U_{\kappa}$ is at least one column apart from
%   $\pa\cap\col{a_1+\cdots+a_i}$ and thus from the  new  $\D$-step. 
%In other words, each $\pa'\in\ucovv{i}{\pa}$ can be written as: 
% \begin{align}
%   \pa' &=\U^{a_1}\D^{b_1} \ldots\ldots  
%    \U^{a_i}
%    \D^k{\bf \U}\D^{b_i-k}\U_{\kappa}\ldots
%  \underbrace{
%    \overset{{\bf \text{\bf new } \D}}{\ldots\ldots\ldots\ldots}
%  }_{\pa\cap\col{a_1+\cdots+a_i}}\ldots
%    \text{\;\;\; where } k=1,\ldots,b_i-1.
%    \label{ucii}
%  \end{align}

\noindent 
(iv)
$\ucovv{\ell+1}{\pa}$ is the set containing the unique upper cover of 
$\pa$ we obtain by inserting  a $\U$-step (and a $\D$-step) at the end of the 
path $\pa$.  In other words,  
$\ucovv{\ell+1}{\pa}=\{\U^{a_1}\D^{b_1}\ldots\U^{a_\ell}\D^{b_\ell}\U\D\}.$
\medskip 

Finally,
since $\pa$ is strongly  irreducible it is not hard to see that all paths 
in the sets $\ucov{i}{\pa}$ and $\ucovv{i}{\pa}$ are again Dyck paths. 
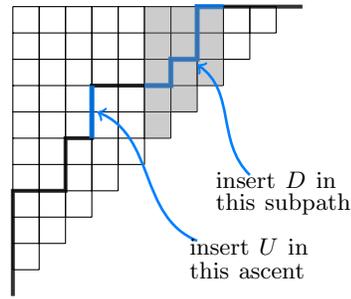
\begin{figure}
  \begin{tikzpicture}[scale=0.35]
	\draw (0,0) grid (1,10);
	\draw (1,1) grid (2,10);
	\draw (2,2) grid (3,10);
	\draw (3,3) grid (4,10);
	\draw (4,4) grid (5,10);
	\draw (5,5) grid (6,10);
	\draw (6,6) grid (7,10);
	\draw (7,7) grid (8,10);
	\draw (8,8) grid (9,10);
	\draw (9,9) grid (10,10);
	
	\draw[line width=1.5pt,opacity=0.8]
	(0,-1)--(0,3)--(2,3)--(2,5)--(3,5)--(3,7)--(6,7)--(6,8)--(7,8)--(7,10)--(11,10);
	\draw[line 
	width=2pt,azu,opacity=0.8](5,7)--(6,7)--(6,8)--(7,8)--(7,10)--(8,10);
	
	\draw[line width=2pt,azu,opacity=0.8](3,5)--(3,7);
	
	%  \draw[->,line width=1pt,color=azu] (4,2) to[bend left,out=-20](5,6.7);
	%  \node at (5,1.5){\fs insert  $\U$};
	
	%  \draw[line width=3pt,opacity=0.4,gray](5,7)--(8,7);    
	\draw[line width=10pt,opacity=0.4,gray](5.5,5)--(5.5,10);
	\draw[line width=10pt,opacity=0.4,gray](6.5,6)--(6.5,10);
	\draw[line width=10pt,opacity=0.4,gray](7.5,7)--(7.5,10);
	
	\draw[->,line width=1pt,color=azu] (9,3.6) to[bend right,out=20] 
	(7,7.8);
	\node[text width=3cm] at (12,3.4){\fs insert $\D$ in };
	\node[text width=3cm] at (12,2.5){\fs this  subpath};
	
	\draw[->,line width=1pt,color=azu] (7,1.1) to[bend right,out=30] 
	(3.2,6);
	\node[text width=3cm] at (11,0.9){\fs insert $\U$ in };
	\node[text width=3cm] at (11,0){\fs this  ascent};
	
\end{tikzpicture}
	\caption{ 
		The set $\ucov{3}{\pa}$ contains all paths obtained by increasing the third ascent
		from $\U^2$ to $\U^3$ and inserting a $\D$-step at any integer point on the indicated subpath. 
	}
	\label{fig:bounce_insertUC}  
\end{figure}
We, therefore, conclude that the upper covers of a strongly 
irreducible Dyck  path  $\pi$ are all paths in the  union 
\begin{equation}
\displaystyle
\ucov{}{\pa}=\bigcup_{i=1}^{\ell}  
\ucov{i}{\pa}\cup\bigcup_{i=1}^{\ell+1} 
\ucovv{i}{\pa}.
\label{ucovers}
\end{equation}
\medspace

The following lemma is the analogous of Lemma \ref{lem:ULi_disjoint} in the case of the upper covers.  
\begin{lemma}
	\begin{enumerate}[(i)]
		\item[]
		\item If $\pi'\not=\U^a(\D\U)^{n+1-a}\D^{a}$ is strongly irreducible then 
		the sets $\ucov{i}{\pa'}, \ucovv{j}{\pa'}$ are pairwise disjoint.
		\item If  $\pi'=\U^a(\D\U)^{n+1-a}\D^{a}$ with  $a>{2}$ then 
		the sets $\ucov{i}{\pa'}, \ucovv{j}{\pa'}$ are pairwise disjoint
		except for $\ucov{1}{\pa'}$ and $\ucov{\ell}{\pa'}$
		which both contain the path  $\U^{a}(\D\U)^{n+2-a}\D^{a}$.
	\end{enumerate}
	\label{lem:UCi_disjoint}  
\end{lemma}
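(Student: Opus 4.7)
My plan is to mirror the proof of Lemma~\ref{lem:ULi_disjoint}, reducing the statement to Lemma~\ref{Lemma:double_cover}. A path $\pa$ lying in two distinct sets among $\{\ucov{i}{\pa'}\}_i$ and $\{\ucovv{j}{\pa'}\}_j$ is precisely one that covers $\pa'$ via two different bounce deletions $\de{I}{K_I}(\pa)=\de{J}{K_J}(\pa)=\pa'$ with $(I,K_I)\neq(J,K_J)$. I would then invoke Lemma~\ref{Lemma:double_cover} applied to the pair $(\pa,\pa')$ and split according to its two cases.

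In case~\refi{double_cover:normal}, the $\U$-steps $\U_I,\U_J$ of $\pa$ lie in the same ascent and the $\D$-steps $\D_{K_I},\D_{K_J}$ in the same descent. Using the descriptions (i)--(iv) of the sets $\ucov{i}{\pa'}$ and $\ucovv{j}{\pa'}$ preceding~\eqref{ucovers}, I would verify case by case that the two insertions necessarily placed their new $\U$ into the same ascent $\U^{a_i}$ of $\pa'$ (for two $\ucov{\cdot}$ sets), or between the same two $\D$-steps of a single descent $\D^{b_j}$ of $\pa'$ (for two $\ucovv{\cdot}$ sets); mixed intersections $\ucov{i}\cap\ucovv{j}$ are excluded because an insertion inside an ascent of $\pa'$ produces a $\U$ lying in an ascent of length $\geq 2$ of $\pa$, while an insertion between two $\D$-steps produces an isolated $\U$ of $\pa$, so their inserted $\U$-steps cannot share an ascent. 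In every scenario this contradicts the distinctness hypothesis.

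In case~\refi{double_cover:exception}, $\pa$ contains a segment $\U^{r+1}(\D\U)^\ell\D^{t+1}$ with $\ell>0$, starting at height $I-K_I\in\{0,1\}$ and ending at height $J-K_J\in\{0,1\}$; consequently $\pa'$ contains the corresponding segment $\U^{r+1}(\D\U)^{\ell-1}\D^{t+1}$ between the same two heights. Writing $\pa'=\alpha S\beta$ for this segment $S$, strong irreducibility of $\pa'$ forbids interior visits at height $\leq 1$ except at the points $(0,1)$ and $(n,n+1)$, which pins $\alpha\in\{\emptyset,\U\}$ and $\beta\in\{\emptyset,\D\}$. Moreover, the inner valleys of $S$ sit at height $r+(I-K_I)$, which strong irreducibility forces to be at least $2$, yielding $r\geq 1$ when $I-K_I=1$; the symmetric bound on $t$ follows. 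A brief case analysis on the four possibilities for $(\alpha,\beta)$, combined with the requirement $\pa'\in\dP{n+1}$, collapses each case to the same family $\pa'=\U^a(\D\U)^{n+1-a}\D^a$ with $a\geq 3$ after reparameterization; the value $a=2$ is excluded precisely because the inner valleys of $(\D\U)^{\ell-1}$ would then touch the line $y=x+1$, violating strong irreducibility.

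Finally, for this exceptional $\pa'$ I would check directly that the unique shared upper cover is $\pa=\U^a(\D\U)^{n+2-a}\D^a$, which arises from the insertion of a new $\U$ either into the first ascent $\U^a$ or near the end of $\pa'$, thereby placing it into the two sets named in the lemma, while all other pairwise intersections remain empty. The main obstacle will be the case analysis in Step~\refi{double_cover:exception}: tracking the segment $S$ against the prefix $\alpha$ and suffix $\beta$ requires care, especially to rule out $a=2$ and to confirm that the four $(\alpha,\beta)$-cases indeed collapse into a single family of paths.
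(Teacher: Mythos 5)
Your proposal is correct and follows essentially the same route as the paper's proof: reduce to Lemma~\ref{Lemma:double_cover}, observe that two deletions from the same ascent of $\pa$ would land in the same set, and in the exceptional case use strong irreducibility of $\pa'$ to pin $\alpha\in\{\emptyset,\U\}$ and $\beta\in\{\emptyset,\D\}$, collapsing $\pa'$ to $\U^a(\D\U)^{n+1-a}\D^a$ with $a>2$. You simply spell out the case analysis (in particular the exclusion of case~\refi{double_cover:normal} and the final collapse) that the paper compresses into ``the claims follow.''
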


\begin{proof}
	Suppose that $\pa$ lies in the intersection of two distinct sets among sets 
	$\ucov{i}{\pa'}$ and $\ucovv{j}{\pa'}$. 
	Then $\pa'$ can be obtained from $\pa$ by two different bounce deletions $\de{I}{k_I}$ and $\de{J}{k_J}$ such that the $I$-th and $J$-th $\U$-steps of $\pa$ do not belong to the same ascent of $\pa$.
	By Lemma~\ref{Lemma:double_cover} this implies that $\pa$ is of the form $\pa=\alpha U^r(DU)^{\ell}D^t\beta$ for some words $\alpha,\beta\in\{\U,\D\}^*$, and $r,\ell,t\in\NN$ with $\ell>0$, such that $\alpha$ ends at height $I-k_I$, and $\beta$ starts at height $J-k_J$.
	Moreover $\pa'=\alpha U^r(DU)^{\ell-1}D^t\beta$.
	If $\pa'$ is strongly irreducible we obtain that $\alpha=\U$ if $k_I=I-1$ and $\alpha$ is empty if $k_I=I$.
	Similarly $\beta=\D$ if $k_J=J-1$ and $\beta$ is empty if $k_J=J$.
	The claims follow.
\end{proof}

Using the fact that the sets in the union \eqref{ucovers} are disjoint if $\pa$ is strongly irreducible, we  arrive at a closed formula for $|\ucov{}{\pa}|$. 
In what follows,  we use the notation $\#\U(\pa)$ for the number of 
$\U$-steps of the path $\pa$.  
\begin{theorem}
	Let $\pa=\U^{a_1}\D^{b_1}\cdots\U^{a_\ell}\D^{b_\ell}$ be a Dyck path in $\dP{n+1}$. 
	\begin{enumerate}[(i)]
		\item If $\pi\in\dP{n+1}$ is strongly  irreducible with 
		$\pi\not=\U^a(\D\U)^{n+1-a}\D^{a}$,  
		the  number of its upper covers is 
		\begin{align}
		\label{up_irred}
		|\ucov{}{\pa}|= 2n+3+\sum_{ i=1 }^{\ell-1} b_i\cdot
		\bigl(\#\U(\pa\cap \col{a_1+\cdots+a_i})\bigr). 
		\end{align}
		\item  If  $\pi=\U^a\D\U\cdots\D\U\D^{a}$ with $a\geq{2}$,  then 
		$|\ucov{}{\pa}|$   
		is given by \eqref{up_irred} reduced by 1. 
		%\item  If $\pi=(\U\D)^{n}$ or $\pa=\U^{n}\D^{n}$ then $|\ucov{}{\pa}|=2n$.
		\item  If $\pi=(\U\D)^{n+1}$ or $\pa=\U^{n+1}\D^{n+1}$ then 
		$|\ucov{}{\pa}|=2(n+1)$.
		\item 
		If $\pi=\U(\U\D)^{n}\D$ then $|\ucov{}{\pa}|=4n-5$. 
	\end{enumerate}
	\label{prop:uc}
\end{theorem}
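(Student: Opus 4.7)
The plan follows the strategy used for the analogous statement about lower covers. For case~(i), I invoke the disjoint-union decomposition
\[
\ucov{}{\pa} = \bigcup_{i=1}^{\ell} \ucov{i}{\pa} \cup \bigcup_{j=1}^{\ell+1} \ucovv{j}{\pa},
\]
which holds by Lemma~\ref{lem:UCi_disjoint}(i), and count each piece separately. For each $\ucov{i}{\pa}$, inserting a new $\U$ anywhere in the ascent $\U^{a_i}$ has the unique effect of producing $\U^{a_i+1}$, so the cardinality reduces to counting the distinct upper covers produced by inserting a new $\D$ at the integer points of $\overline{\pa}_i$. I will show, by considering how an inserted $\D$ either extends an existing descent of $\overline{\pa}_i$ or splits one of its ascents, that the distinct upper covers so obtained correspond bijectively to the descents of $\overline{\pa}_i$ together with the interior integer points of its ascents, yielding a clean expression for $|\ucov{i}{\pa}|$.

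For each $\ucovv{j}{\pa}$ with $1 \le j \le \ell$, the new $\U$ is placed at one of the $b_j - 1$ interior positions of the descent $\D^{b_j}$ (creating a new peak at height $a_1+\cdots+a_j$), and the new $\D$ is placed at an integer point of $\pa \cap \col{a_1+\cdots+a_j}$. A direct verification using strong irreducibility shows that each such pair of insertions produces a distinct upper cover, so $|\ucovv{j}{\pa}|$ is a simple product. Finally $|\ucovv{\ell+1}{\pa}|=1$, corresponding to appending $\U\D$ at the end of $\pa$. Summing the contributions from all pieces and carefully collecting boundary terms, using that consecutive subpaths $\overline{\pa}_i$ and $\overline{\pa}_{i+1}$ overlap in exactly the column $\col{a_1+\cdots+a_i}$, the total collapses to the closed form $2n+3+\sum_{i=1}^{\ell-1} b_i \cdot \#\U(\pa \cap \col{a_1+\cdots+a_i})$, establishing~(i).

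For case~(ii), Lemma~\ref{lem:UCi_disjoint}(ii) identifies exactly one upper cover (namely $\U^a(\D\U)^{n+2-a}\D^a$) that is counted twice by the decomposition, so the formula from~(i) must be reduced by one. Cases~(iii) and~(iv) fall outside the strong-irreducibility hypothesis, so the main disjoint-union argument does not apply directly. They are handled by direct enumeration: for $\pa=(\U\D)^{n+1}$ and $\pa=\U^{n+1}\D^{n+1}$ the upper covers are precisely the $2(n+1)$ elementary lengthenings of the unique peak/ascent/descent structure; for $\pa=\U(\U\D)^n\D$, a direct count combining merges of consecutive peaks with boundary extensions produces $4n-5$.

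The main obstacle will lie in the bookkeeping for~(i): correctly identifying when distinct integer-point insertions of the new $\D$ collapse to the same Dyck path (especially when an inserted $\D$ splits an ascent of $\overline{\pa}_i$ other than $\U^{a_i}$), and verifying that the contributions from columns shared between adjacent $\overline{\pa}_i$'s and between $\ucov{i}{\pa}$ and $\ucovv{j}{\pa}$ telescope neatly into the claimed closed-form expression without over- or under-counting.
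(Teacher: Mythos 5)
Your overall strategy is the paper's: the same decomposition of $\ucov{}{\pa}$ into the sets $\ucov{i}{\pa}$ and $\ucovv{j}{\pa}$, disjointness from Lemma~\ref{lem:UCi_disjoint}, a product count for each $\ucovv{j}{\pa}$, the telescoping through the shared columns $\overline\pa_i\cap\overline\pa_{i+1}=\pa\cap\col{a_1+\cdots+a_i}$, and direct enumeration for (ii)--(iv). The one step you must repair is the count of distinct $\D$-insertions into a subpath $w$: two insertion points give the same path exactly when they are separated only by $\D$-steps, so the number of distinct outcomes is $\#\U(w)+1$ (one class for inserting at the very beginning of $w$, plus one for each position immediately following a $\U$-step). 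Your proposed bijection for $\ucov{i}{\pa}$ with ``descents of $\overline\pa_i$ together with the interior integer points of its ascents'' is not robust at the endpoints of $\overline\pa_i$: an insertion at an endpoint can create a \emph{new} descent rather than extend an existing one or split an ascent (for instance prepending a $\D$ when $\overline\pa_i$ begins with a $\U$-step), so taken literally it undercounts, and summed over $i$ this destroys the constant $2n+3$ in \eqref{up_irred}. Symmetrically, in $\ucovv{j}{\pa}$ it is not true that every integer point of $\pa\cap\col{a_1+\cdots+a_j}$ yields a distinct cover --- points joined by $\D$-steps collapse --- so the correct factor is $\#\U(\pa\cap\col{a_1+\cdots+a_j})+1$ rather than the number of integer points. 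Once you set $|\ucov{i}{\pa}|=\#\U(\overline\pa_i)+1$ and $|\ucovv{j}{\pa}|=(b_j-1)\bigl(\#\U(\pa\cap\col{a_1+\cdots+a_j})+1\bigr)$, your telescoping plan goes through exactly as in the paper and yields \eqref{up_irred}; parts (ii)--(iv) of your plan agree with the paper's treatment.
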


\begin{proof}
	If  $\pi\not=\U^a\D\U\cdots\D\U\D^{a}$ is strongly irreducible then
	the sets in \eqref{ucovers} are disjoint, hence we have 
	\begin{align}
	|\ucov{}{\pa}| & 
	=\sum_{i=1}^{\ell}|\ucov{i}{\pa}|+\sum_{i=1}^{\ell+1}
	|\ucovv{i}{\pa}|
	=1+\sum_{i=1}^{\ell}|\ucov{i}{\pa}|+\sum_{i=1}^{\ell}
	|\ucovv{i}{\pa}|. \label{i} 
	\end{align}
	Each set $\ucov{i}{\pa}$ has as many elements as there are ways to insert 
	a $\D$-step  in $\overline\pa_i$. 
	The different ways to insert a $\D$ in any path 
	are, either to insert a $\D$ in the beginning of the path or 
	to insert a $\D$ immediately after a $\U$-step. 
	This implies that $ |\ucov{i}{\pa}|=\#U(\overline\pa_i)+1$. 
	Likewise, by the definition of $\ucovv{i}{\pa}$, since for each double 
	descent  $\D\D$ in $\D^{b_i}$   the range of possible ways to insert a new $\D$-step is the subpath     $\pa\cap\col{a_1+\cdots+a_i}$, 
	we conclude that the sum in \eqref{i} becomes:    
	\begin{align}
	|\ucov{}{\pa}|   
	%  &= 1+\sum_{i=1}^{\ell}|\ucov{i}{\pa}|+\sum_{i=1}^{\ell}|\ucovv{i}{\pa}|
	&= 1+
	\sum\limits_{i=1}^{\ell}\bigl(\#U(\overline\pa_i)+1\bigr)
	+\sum_{ i=1 }^{\ell} (b_i-1)
	\bigl(\#\U(\pa\cap \col{a_1+\cdots+a_i})+1\bigr) \notag\\
	& = 1+\sum_{i=1}^{\ell}b_i+
	\sum\limits_{i=1}^{\ell}\#U(\overline\pa_i)
	+\sum_{ i=1 }^{\ell} (b_i-1)
	\bigl(\#\U(\pa\cap \col{a_1+\cdots+a_i})\bigr) \notag\\
	& = 1+(n+1)+
	\sum\limits_{i=1}^{\ell}\#U(\overline\pa_i)
	+\sum_{ i=1 }^{\ell} (b_i-1)
	\bigl(\#\U(\pa\cap \col{a_1+\cdots+a_i})\bigr). \label{ii}
	\end{align}
	Counting the $\U$-steps of each $\overline \pa_i$  on each vertical line $x=s$ and taking into account that 
	the $\U$-steps in each $\overline\pa_i\cap\overline\pa_{i+1}=\pa\cap 
	\col{a_1+\cdots+a_i}$ are 
	counted twice, we have
	\begin{align*}
	\sum\limits_{i=1}^{\ell}\#U(\overline\pa_i)=
	\sum_{s=0}^{n+1}
	\Bigl(\#\U(\pa\cap\{x=s\})\Bigr)
	+\sum_{ i=1 }^{\ell-1} 
	\bigl(\#\U(\pa\cap \col{a_1+\cdots+a_i})\bigr).
	\end{align*}
	Thus,  we continue from 
	\eqref{ii} as 
	\begin{align}
	&  n+2+ \sum_{s=0}^{n+1}
	\Bigl(\#\U(\pa\cap\{x=s\})\Bigr)
	+\sum_{ i=1 }^{\ell-1} 
	\bigl(\#\U(\pa\cap \col{a_1+\cdots+a_i})\bigr)
	+\sum_{ i=1 }^{\ell} (b_i-1)
	\bigl(\#\U(\pa\cap \col{a_1+\cdots+a_i})\bigr) \label{b} \\
	& =\,   2 n+3 +\sum_{ i=1 }^{\ell-1} b_i
	\bigl(\#\U(\pa\cap \col{a_1+\cdots+a_i})\bigr), \label{bb}
	\end{align}
	where, to go from \eqref{b} to \eqref{bb}, we used the fact that
	the total number of $\U$-steps in the first sum of \eqref{b} is $n+1$, and that 
	the last column  has no $\U$-steps (i.e., $\#\U(\pa\cap \col{a_1+\cdots+a_\ell})=0$). 
	
	The claim in $(ii)$  is straightforward from Lemma\ref{lem:UCi_disjoint}(ii). 
	For $(iii)$, it is not hard to see that  all the upper covers of 
	$(\U\D)^{n+1}$ are  obtained either  by replacing a $\U\D$ by 
	$\U^2\D^2$, 
	or a $(\U\D)^2$ by $\U(\U\D)^2\D$,  or  $(\U\D)^{n+2}$. 
	In the case where $\pa=\U^{n+1}\D^{n+1}$, the upper covers  are either $\U^k\D\U^{n+2-k}\D^{n+1}$ for $1\leq{k}\leq{n+2}$, or $\U^{n+1}\D^k\U\D^{n+2-k}$ for $2\leq{k}\leq{n+1}$.

	Finally, for $(iv)$,  the upper covers
	$\pa'\in\ucov{}{\pa}$  of $\pa=\U(\U\D)^n\D$ fall in one of the following 
	categories:
	\begin{enumerate}[(i)]
		\item $\pa'=\U(\U\D)^{n+1}\D$,
		\item $\pa'$ is obtained from $\pa$ 
		by increasing a $\D\U$ to $\D^2\U^2$,
		\item $\pa'$ is obtained from $\pa$ 
		by increasing a $\U\D$ to $\U^2\D^2$,
		\item $\pa'$ is obtained from $\pa$ 
		by increasing a $\U\D\U\D$ to $\U^2\D\U\D^2$,
		\item $\pa'$ is obtained from $\pa$ 
		by increasing a $\D\U\D\U$ to $\U^2\U\D\U^2$. 
	\end{enumerate}  
	All the above,  sum up to $4n-5$ possible upper coves.   
\end{proof}
\medspace

\begin{remark}
	The sum in \eqref{up_irred} can be easily  computed as follows. 
	Extend each descent $\D^{b_i}$ of  $\pa$  until it hits 
	the   line $x=y$ and turn vertically up  selecting the subpath
	in the column   at which the extended descent terminated. 
	This is  the subpath 
	$\pa\cap\col{a_1+\cdots+a_i}$ whose number of $\U$-steps is  multiplied by 
	$b_i$ in \eqref{up_irred} (see, for example, \ref{fig:UCformula}(a)).
	\label{remark:UC}
\end{remark}

\begin{figure}
 \begin{tikzpicture}[scale=0.32]
	
	\begin{scope}[xshift=0cm]
		\draw (0,0) grid (1,10);
		\draw (1,1) grid (2,10);
		\draw (2,2) grid (3,10);
		\draw (3,3) grid (4,10);
		\draw (4,4) grid (5,10);
		\draw (5,5) grid (6,10);
		\draw (6,6) grid (7,10);
		\draw (7,7) grid (8,10);
		\draw (8,8) grid (9,10);
		\draw (9,9) grid (10,10);
		
		\draw[line width=1.5pt,opacity=0.8]
		(0,-1)--(0,3)--(2,3)--(2,5)--(3,5)--(3,7)--(6,7)--(6,8)--(7,8)--(7,10)--(11,10);
		
		\draw[line width=3pt,opacity=0.3,gray](0,2.9)--(4.05,2.9);    
		\draw[line width=10pt,opacity=0.3,gray](3.5,3)--(3.5,10);  
		\draw[line width=3pt,opacity=0.45,gray](2,4.9)--(6.05,4.9);    
		\draw[line width=10pt,opacity=0.45,gray](5.5,5)--(5.5,10);  
		\draw[line width=3pt,opacity=0.45,gray](3,6.9)--(8,6.9);    
		\draw[line width=10pt,opacity=0.45,gray](7.5,7)--(7.5,10); 
		
		\draw[line width=3pt,opacity=0.75,gray](6,7.9)--(9,7.9);    
		\draw[line width=10pt,opacity=0.75,gray](8.5,8)--(8.5,10);
		\node at (4,-2){$(a)$};
	\end{scope}
	\begin{scope}[xshift=15cm,yshift=1cm]
		\def\a{9}
		\draw (0,0) grid (1,\a);
		\draw (1,1) grid (2,\a);
		\draw (2,2) grid (3,\a);
		\draw (3,3) grid (4,\a);
		\draw (4,4) grid (5,\a);
		\draw (5,5) grid (6,\a);
		\draw (6,6) grid (7,\a);
		\draw (7,7) grid (8,\a);
		\draw (8,8) grid (9,\a);
		\fill (5,4)circle (6pt);
		
		\draw[line width=1.5pt,opacity=0.8] 
		(0,-1)--(0,3)--(2,3)--(2,4)--(5,4)
		--(5,5)--(5,6)--(5,8)--(6,8)--(6,9)--(10,9);
		\draw[line width=5pt,opacity=0.4,gray](5,4)--(5,8);
		\draw[line width=5pt,opacity=0.4,gray!40!azu](2,4)--(5,4);
		\node at (4,-3){$(b)$\;\;$\pa=\pa_1\oplus\pa_2$};
		\node at (5.6,3.8){$p$};
	\end{scope}
	\begin{scope}[xshift=28cm,yshift=1cm]
		\def\a{9}
		\draw (0,0) grid (1,\a);
		\draw (1,1) grid (2,\a);
		\draw (2,2) grid (3,\a);
		\draw (3,3) grid (4,\a);
		\draw (4,4) grid (5,\a);
		\draw (5,5) grid (6,\a);
		\draw (6,6) grid (7,\a);
		\draw (7,7) grid (8,\a);
		\draw (8,8) grid (9,\a);
		\fill (4,4)circle (6pt);
		\fill (5,5)circle (6pt);
		
		\draw[line width=5pt,opacity=0.4,gray](5,5)--(5,8);
		\draw[line 
		width=5pt,opacity=0.4,gray!40!azu](2,4)--(4,4); 
		\draw[line width=1.5pt,opacity=0.8] (0,-1)--(0,3)--(2,3)--(2,4)--(4,4)
		--(4,5)--(5,5)--(5,6)--(5,8)--(6,8)--(6,9)--(10,9);

		\node at (4,-3){$(c)$\;\;$\pa=\U\pa_1\oplus'_{1}\pa_2\D$};
	\end{scope}
	\begin{scope}[xshift=41cm,yshift=2cm]
		\def\a{8}
		\draw (0,0) grid (1,\a);
		\draw (1,1) grid (2,\a);
		\draw (2,2) grid (3,\a);
		\draw (3,3) grid (4,\a);
		\draw (4,4) grid (5,\a);
		\draw (5,5) grid (6,\a);
		\draw (6,6) grid (7,\a);
		\draw (7,7) grid (8,\a);
		\fill (4,4)circle (6pt);
		
		\draw[line width=1.5pt,opacity=0.8] (0,-1)--(0,3)--(2,3)--(2,4)--(4,4)
		--(4,7)--(5,7)--(5,8)--(9,8); 
		\draw[line width=5pt,opacity=0.4,gray!40!azu]
		(0,3)--(2,3)--(2,4)--(4,4); 
		\draw[line width=5pt,opacity=0.4,gray](4,4)
		--(4,7)--(5,7)--(5,8); 
		
		% \draw[fill=white] (0,3) circle (5pt);  
		%  \draw[fill=white] (1,3) circle (5pt);  
		%  \draw[fill=white] (4,5) circle (5pt);  
		%  \draw[fill=white] (4,6) circle (5pt);  
		%  \draw[fill=white] (4,7) circle (5pt);  
		%  
		%  \draw[azu,fill=azu] (3,4) circle (5pt);  
		%  \draw[azu,fill=azu] (2,4) circle (5pt);  
		%  \draw[fill=azu] (5,8) circle (5pt);  
		%  
		%%  \node at (0,0) [circle,draw] {};
		%  \node at (0,0) [rectangle,draw, minimum width=0.04cm]{};
		%%  \draw [cyan] plot [only marks, mark=square*, mark size=2.5] coordinates 
		%  \draw [cyan] plot [only marks, mark=square*, mark size=3.5] coordinates 
		%  {(1,1) (2,3)   (2.5,2)};
		
		\draw [black,fill=white] plot [only marks, mark=*, mark size=3.9] coordinates 
		{(0,3) (1,3) };

		\draw[black,fill=gray] plot [only marks, mark=*, mark size=3.9]
		coordinates { (2,4)  (3,4)};

		\draw[black,fill=white] plot [only marks, mark=square*, mark size=3.5]
		coordinates {(4,5)  (4,6)  (4,7)};
		\draw[black,fill=gray!70] plot [only marks, mark=square*, mark size=3.5] coordinates 
		{ (5,8)};

		\node at (4,-4){$(d)$\;\;$\pa=\U\pa_1\oplus'\pa_2\D$};
	\end{scope}
\end{tikzpicture}
	\caption{(a)
		In view of Remark \ref{remark:UC}, the number of upper covers of the path on the left
		is $|\ucov{}{\pa}|=(2\cdot10+3)+(2\cdot{2}+1\cdot{1}+3\cdot{2}+1\cdot{0})$.}
	\label{fig:UCformula}
\end{figure}
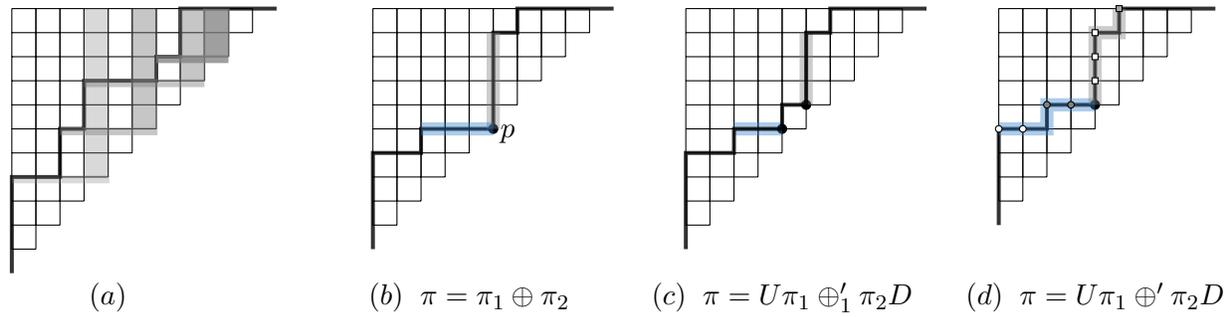
\medspace

The computation of the number of upper covers  of an arbitrary Dyck path $\pa$
is recursive. Unlike the case with the lower covers, we cannot have an explicit formula of  
$|\ucov{}{\pa}|$  in terms of 
the upper covers of the irreducible components of $\pa$, since
there exist  upper covers that occur 
by inserting a $\U$-step in a component of $\pa$ and a $\D$-step in the subsequent one. 
In the following proposition we
use the notation $\oplus,\oplus'_1$ without requiring that the components are 
irreducible. In other words, $\pa_1\oplus \pa_2$ is the concatenation of any two  Dyck paths 
$\pa_1,\pa_2$.
If $\U\pa_1\D,\U\pa_2\D$ are Dyck paths
then  $\U\pa_1\oplus' \pa_2\D$ is the  concatenation  of $\U\pa_1,\pa_2\D$,       
whereas 
$\U\pa_1\oplus'_1 \pa_2\D $ is the concatenation of $\U\pa_1\U$ and $\D\pa_2\D$.

\begin{theorem}
	\begin{enumerate}[(i)]
		\item[]
		\item   If $\pa_1,\pa_2$ are Dyck paths and $\pa=\pa_1\oplus\pa_2$ then 
		\begin{align}
		|\ucov{}{\pa}|=|\ucov{}{\pa_1}|+|\ucov{}{\pa_2}|-1+b\cdot{a}
		\label{equ:ub1}
		\end{align}
		where $\D^{b}$ is the last descent of $\pa_1$
		and $\U^a$ is the first ascent of $\pa_2$.       
		\item   If  $\pa=\U\pa_1\oplus'_{1}\pa_2\D$ then 
		\begin{align}
		|\ucov{}{\pa}|=|\ucov{}{\U\pa_1\D}|+|\ucov{}{\U\pa_2\D}|+(b+1)(a+1)
		\label{equ:ub2}
		\end{align}
		where $\D^{b+1}$ is the last descent of $\U\pa_1\D$
		and $\U^{a+1}$ is the first ascent of $\U\pa_2\D$.  
		%    \end{enumerate}
		%    \begin{enumerate}
		\item[(iii)]
		If  $\pa=\U\pa_1\oplus'\pa_2\D$ with $\U\pa_1\D,\U\pa_2\D$ strongly irreducible Dyck paths
		then 
		\begin{align}  
		|\ucov{}{\pa}|=|\ucov{}{\U\pa_1\D}|+|\ucov{}{\U\pa_2\D}|-2+a'b+ab+ab'
		\label{equ:ub3}
		\end{align}
		where $\D^{b'}\U\D^b$ is the subpath in the last row of 
		$\U\pa_1\D$ and
		$\U^{a}\D\D^{a'}$ is the subpath in the first column of $\U\pa_2\D$. 
	\end{enumerate}  
	\label{prop:UCdecomp}
\end{theorem}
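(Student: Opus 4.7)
My plan is to prove all three parts by the same strategy: classify the upper covers $\pa'$ of $\pa$ according to how the bounce insertion that produces $\pa'$ interacts with the given decomposition. Recall that a bounce insertion is specified by a pair $(p,q)$, where $p$ is the integer point of $\pa$ at which the new $\U$-step is inserted, and $q$ lies on the subpath $\pa \cap \col{j-1}$ (with $j$ the index of the new $\U$-step in $\pa'$).

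For part (i), let $P=(c,c)$ be the join of $\pa_1$ and $\pa_2$. I would split the upper covers of $\pa$ into two classes based on whether $P$ still lies on $\pa'$. The class of covers through $P$ consists precisely of paths of the form $\pa'_1 \oplus \pa_2$ or $\pa_1 \oplus \pa'_2$ with $\pa'_i$ an upper cover of $\pa_i$; these contribute $|\ucov{}{\pa_1}|+|\ucov{}{\pa_2}|$ covers, with a single coincidence (the path $\pa_1 \oplus (\U\D) \oplus \pa_2$) yielding the $-1$. The covers missing $P$ arise from placing the new $\U$ at one of the $b$ interior integer points of the last descent of $\pa_1$ (all at height $y=c$) and the new $\D$ at one of the $a$ interior integer points of the first ascent of $\pa_2$ (all at $x=c$). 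Lemma~\ref{Lemma:double_cover} then ensures distinct pairs $(p,q)$ yield distinct covers, accounting for the $b\cdot a$ term.

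Parts (ii) and (iii) follow the same template but with a richer join structure. For (ii), the middle peak $\U\D$ separating $\U\pa_1$ from $\pa_2\D$ prevents any overlap among the within-component covers (so no $-1$ correction is needed), and the cross-boundary covers are parametrized by $(b+1)(a+1)$ pairs, the extra $+1$ on each side corresponding to an additional admissible insertion position made available by the middle peak. For (iii), the join is a single interior point on $y=x+1$ with local path structure $\D^{b'}\U\D^b$ next to $\U^a\D\D^{a'}$; the cross-boundary insertions will then split into three disjoint families of sizes $a'b$, $ab$, and $ab'$, depending on how the inserted $\U$ and $\D$ interact with the three descents $\D^{b'}$, $\D^b$, and $\D^{a'}$ near the join. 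The $-2$ accounts for two coincidences among within-component covers that straddle the fused join, mirroring the single coincidence of (i) but doubled by the asymmetric local geometry.

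The main obstacle I expect is part (iii): correctly partitioning the cross-boundary insertions into three families of the prescribed sizes, and identifying the two within-component coincidences. The local structure $\D^{b'}\U\D^b$ adjacent to $\U^a\D\D^{a'}$ admits several visually similar but combinatorially distinct insertions, and care is needed to ensure the three families are disjoint and jointly exhaustive. Lemma~\ref{Lemma:double_cover} will play the central role in verifying that each parametrization produces distinct covers and that the claimed coincidences are the only ones.
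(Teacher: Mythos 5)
Your proposal follows essentially the same route as the paper's proof: split the upper covers into those coming from a bounce insertion within a single component (contributing $|\ucov{}{\pa_1}|+|\ucov{}{\pa_2}|$ minus the coincidences at the join, one in case (i) and two in case (iii)) and the cross-boundary covers obtained by inserting the $\U$-step in the last descent(s) of the first component and the $\D$-step in the first ascent(s) of the second, whose admissible positions give exactly the products $b\cdot a$, $(b+1)(a+1)$, and $a'b+ab+ab'$. The classification, the identified coincidences, and the parametrization of the three families in (iii) all match the paper's argument.
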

\begin{proof}

	For (i), first notice that all Dyck paths  of the form  
	$\pa'\oplus\pa_2$ with $\pa'\in\ucov{}{\pa_1}$ or  the form 
	$\pa_1\oplus\pa'$ with
	$\pa'\in\ucov{}{\pa_2}$ are upper covers of $\pa$. 
	Since  the upper cover $\pa_1\U\D\pa_2$ occurs in both cases, 
	we have   $|\ucov{}{\pa_1}|+|\ucov{}{\pa_2}|-1$  such paths. 
	%Let us denote by $p$ the common endpoint of $\pa_1,\pa_2$. 
	We next need to count the upper covers that occur by inserting a $\U$-step 
	in $\pa_1$ and a $\D$-step in $\pa_2$ and are not encountered in the previous cases. 
	Assume that  $p=(k,k)$ is the intersection  point of $\pa_1,\pa_2$ (see 
	Figure \ref{fig:UCformula}(b)).
	Viewing these upper covers in terms of bounce insertions, one can 
	see that these are  the paths we get by  inserting a $\U$-step
	at any integer point, other than $p$, of the last descent of $\pa_1$ 
	(light blue, Figure \ref{fig:UCformula}(b))
	and  a $\D$-step at any integer point, other than $p$, of the first ascent of 
	$\pa_2$ (light gray, Figure \ref{fig:UCformula}(b)).
	This contributes the term $b\cdot{a}$  in \eqref{equ:ub1}.

	For (ii), notice that  all Dyck paths  of the form   
	$\U\pa'\oplus'_1\pa_2\D$ where
	$\U\pa'\D\in\ucov{}{\U\pa_1\D}$ or of the form $\pa_1\oplus'_1 \pa'$ where
	$\U\pa'\D\in\ucov{}{\U\pa_2\D}$ are upper covers of $\pa$. 
	This contributes $|\ucov{}{\pa_1}|+|\ucov{}{\pa_2}|$ in \eqref{equ:ub2}.   
	As before, we also have to count the upper covers  
	that occur by inserting a $\U$-step in $\pa_1$ and a $\D$-step in $\pa_2$. 
	Viewing these upper covers in terms of bounce insertions,  these are all 
	paths we get by  inserting a $\U$-step
	at any integer point  of the last descent of $\pa_1$ 
	(light blue, Figure \ref{fig:UCformula}(c))
	and  a $\D$-step at any integer point  of the first ascent of $\pa_2$ (light 
	gray, Figure \ref{fig:UCformula}(c)).
	This contributes  $(b+1)(a+1)$  in \eqref{equ:ub2}.    
	
	For (iii), 
	%notice that since $\pa_1\D,\U\pa_2$ are strongly irreducible 
	%we cannot be reduced to (ii). 
	in order to compute $|\ucov{}{\pa}|$,   notice that 
	all Dyck paths  of the form  
	$\U\pa'\oplus'\pa_2\D$ with $\U\pa'\D\in\ucov{}{\U\pa_1\D}$ or  of the form $\U\pa_1\oplus'\pa'\D$ with 
	$\U\pa'\D\in\ucov{}{\U\pa_2\D}$ are upper covers of $\pa$. 
	Since  the upper covers $\U\pa_1\U\D\pa_2\D$ 
	and  $\U\pa_1\D\U\pa_2\D$ 
	occur in both instances, we have a total of $|\ucov{}{\pa_1}|+|\ucov{}{\pa_2}|-2$ 
	contributed in \eqref{equ:ub3}.
	We next count all upper covers  
	that occur by inserting a $\U$-step in $\pa_1$ and a $\D$-step in $\pa_2$.
	Viewing these upper covers in terms of bounce insertions, we have two categories: \\
	a) we insert a 
	$\U$-step in one of the first $b'$ integer points  of the last but one descent  $\D^{b'}$  of $\pa_1$ (i.e., points 
	\tikz{\draw[line width=0.7pt] (0,0) circle (2.5pt);}
	in Figure \ref{fig:UCformula}(c)) and insert a $\D$-step in one of the last $a$ integer points of 
	the first ascent $\U^a$  of $\pa_2$
	(i.e.,  points 
	\tikz{\draw[line width=0.7pt,black] (0,0) rectangle (.15,.15);} in Figure \ref{fig:UCformula}(c)). This contributes $a\cdot b'$ upper covers in \eqref{equ:ub3},  or \\
	b) we insert a  $\U$-step in one of the first $b$ integer points  of 
	the last descent $\D^{b}$ of $\pa_1$ (i.e., points 
	\tikz{\draw[line width=0.7pt,fill=gray] (0,0) circle (2.5pt);}
	in Figure \ref{fig:UCformula}(c)) and  insert a $\D$-step in one of 
	the last  $a$ integer points of $\U^a$  or the last  $a'$ integer points of $\U^{a'}$ 
	(i.e.,  points 
	\tikz{\draw[line width=0.7pt,black] (0,0) rectangle (.15,.15);} 
	or  \tikz{\draw[line width=0.7pt,black,fill=gray] (0,0) rectangle (.15,.15);} 
	in Figure \ref{fig:UCformula}(c)). This contributes $b\cdot (a+a')$ upper covers in \eqref{equ:ub3}.
\end{proof}

\begin{example}\rm  In  Figure  \ref{fig:UCformula}
	\begin{itemize}
		\item[(b)]  
		we depict the path $\pa=\pa_1\oplus\pa_2$ for   $\pa_1=\U^4\D^2\U\D^3$ and $\pa_2=\U^4\D\U\D^4$. 
		Using Theorem \ref{prop:uc} we compute 
		$|\ucov{}{\pa_1}|=11$ and  $|\ucov{}{\pa_2}|=10$. 
		In view of Theorem \ref{prop:UCdecomp}(i), we  have $|\ucov{}{\pa}|=11+10+3\cdot4-1=32$.
		\item[(c)]
		we depict the path $\pa=\U\pa_1\oplus'_1\pa_2\D$ for   $\U\pa_1\D=\U^4\D^2\U\D^3$ and $\U\pa_2\D=\U^4\D\U\D^4$. In view of Theorem \ref{prop:UCdecomp}(ii) we have  $|\ucov{}{\pa}|=11+10+3\cdot4=33$
		\item[(d)] 
		we depict the path $\pa=\U\pa_1\oplus'\pa_2\D$ for   $\U\pa_1\D=\U^4\D^2\U\D^3$ and $\U\pa_2\D=\U^4\D\U\D^4$. In view of Theorem \ref{prop:UCdecomp}(iii) we have   $|\ucov{}{\pa}|=11+10-2+ 
		2\cdot{3}+2\cdot{3}+2\cdot{1}=33$.
	\end{itemize}
	
	% , $\pa_1\oplus'_1\pa_2$
	% and  $\pa_1\oplus'\pa_2$
	%for   $\pa_1=\U^4\D^2\U\D^3$ and $\pa_2=\U^4\D\U\D^4$. 
	% Using Proposition \ref{prop:uc} we compute 
	% $|\ucov{}{\pa_1}|=11$ and  $|\ucov{}{\pa_2}|=10$. In view of Proposition 
	% \ref{prop:UCdecomp}, for each of the above cases we 
	% have (b) $|\ucov{}{\pa}|=11+10+3\cdot4-1=32$, 
	% (c) $|\ucov{}{\pa}|=11+10+3\cdot4=33$, and (d) $|\ucov{}{\pa}|=11+10-2+ 
	% 2\cdot{3}+2\cdot{3}+2\cdot{1}=33$.
	
\end{example}

\section{Pattern avoidance in Shi tableaux}
\label{sec:avoid}

Recall from  our definition in  Section \ref{sec:shi} that $T'$ occurs as a pattern in $T$  if $T'$ can be obtained   from $T$ after an iteration of  bounce deletions.
Otherwise, we say that $T$ {\em avoids} $T'$. 
%   and we write $T\in \av{}{T'}$. 
%  If we want to restrict to all tableaux of size  $n$ that avoid  $T'$ we write $T\in \av{n}{T'}$. In other words, $T\in \shiT{n}$ and $T$ avoids $T'$. 
%   
%  \gr{
%	Let $\mathcal{A}=\cup_{n\geq{1}}\mathcal{A}_n$ be a family of combinatorial structures in which we have defined a notion of a substructure, and let $S,T\in \mathcal{A}$. If $S$ is a substructure of $T$, we say that $S$ occurs as a pattern in $T$, otherwise we say that $T$ avoids $S$.
%	
We denote by $\av{}{T'}$ the subset of Shi tableaux   that avoid $T'$,
and by $\av{n}{T'}$ the  subset of Shi tableaux in  $\shiT{n}$ that avoid $T'$. If  
$|\av{n}{T'}|=|\av{n}{T}|$ for all $n\in\mathbb N$,
we say that $T$ and $T'$ are \emph{Wilf-equivalent}.
A fundamental problem on patterns is related to Wilf-equivalence and more precisely to finding 
classes of Wilf equivalent objects. 
\medskip

We begin this  section with an analysis of small patterns. 
More precisely, we present closed formulas for   $|\av{n}{T}|$, where 
$T$ is any Shi tableau of size 2.  We continue with  analogous results for   
certain tableau of size $k$, which can be considered as natural
generalizations of the ones of size 2. 
Finally, we compare 
the results we obtain with known results in the theory of permutation-patterns.
%\subsection{Shi tableaux of size \texorpdfstring{$2$}{2}}
\subsection{Shi tableaux of size 2}
There are five Shi tableaux of size two 
\begin{align}
\label{size2}
\te{}=\U^3\D^3, \tg{}=\U^2\D\U\D^2, \tor{}=\U^2\D^2\U\D, 
\tv{}=\U\D\U^2\D^2 \text{ and } \tf{}=(\U\D)^3, 
\end{align}
which are divided into two Wilf-equivalence classes, as shown in the following proposition. 
%In particular we have:
\begin{proposition}
	\label{prop:avtableaux2}
	For every $n\geq 2$ we have
	\begin{enumerate}[(i)]
		\item $|\av{n}{\te{}}|=|\av{n}{\tf{}}|=2^n$, and
		\item $|\av{n}{\tg{}}|=|\av{n}{\tor{}}|=|\av{n}{\tv{}}|=\binom{n+1}{2}+1$.
	\end{enumerate}
\end{proposition}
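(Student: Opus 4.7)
My plan is to handle each of the five size-$2$ patterns separately, characterizing the avoidance set $\av{n}{T}$ structurally and counting directly; the three Wilf-equivalences then drop out of the matching counts.

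For $\te{} = \U^3\D^3$ the key observation is that bounce deletions do not increase the maximum height of a Dyck path: each $\de{i}{k}$ removes $\U_i$ and a strictly later $\D_k$, so all heights strictly between these two positions decrease by one and the rest are unchanged. Conversely, any path of height $\geq 3$ can be reduced to $\te{}$ by iteratively deleting a $\U$--$\D$ pair not belonging to a fixed height-$3$ excursion. Hence $\av{n}{\te{}}$ consists exactly of Dyck paths of semilength $n+1$ with height at most $2$, and these decompose uniquely into irreducible components of the form $\U(\U\D)^k\D$. Counting compositions of $n+1$ into positive parts gives $2^n$. For $\tf{} = (\U\D)^3$ I would then transport this count via the zeta map referenced in the introduction, showing it restricts to a bijection $\av{n}{\te{}} \to \av{n}{\tf{}}$; as a fallback a direct characterization of $\av{n}{\tf{}}$ should be obtainable from the strongly irreducible decomposition of Section~\ref{sec:ulbounds}, though my small-case calculations indicate that once $n \geq 4$ not every irreducible path avoids $\tf{}$, so the description is more delicate than the height criterion for $\te{}$.

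For $\tg{} = \U^2\D\U\D^2$ I claim $\pi \in \av{n}{\tg{}}$ iff the irreducible decomposition of $\pi$ has the form $(\U\D)^i \oplus \U^a\D^a \oplus (\U\D)^j$ for some $a \geq 1$ and $i,j \geq 0$ with $i + a + j = n+1$ (the case $a=1$ yielding $(\U\D)^{n+1}$). Closure under bounce deletions is a short case analysis: a reduction inside the $\U^a\D^a$-block gives $\U^{a-1}\D^{a-1}$, while a reduction spanning a $\U\D$ and the $\U^a\D^a$-block either absorbs a $\U\D$ into the block (preserving the form) or collapses two adjacent $\U\D$s. Conversely, any path with either a non-$\U^a\D^a$ irreducible component or two or more non-$\U\D$ components has an internal valley at height $\geq 1$ that a suitable bounce deletion exposes, reducing the path to $\tg{}$. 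The count is $1 + \sum_{a=2}^{n+1}(n+2-a) = \binom{n+1}{2} + 1$.

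For $\tor{} = \U^2\D^2\U\D$ and $\tv{} = \U\D\U^2\D^2$, the Dyck-path reversal $\pi \mapsto \pi^R$ (reverse the word, then swap $\U \leftrightarrow \D$) is an involution that commutes with bounce deletions and swaps $\tor{}$ with $\tv{}$, immediately giving $|\av{n}{\tor{}}| = |\av{n}{\tv{}}|$. To match this count with $|\av{n}{\tg{}}|$, my plan is to characterize $\av{n}{\tor{}}$ as the paths of the form $(\U\D)^i \oplus \sigma$ where $\sigma$ is an irreducible Dyck path that is either $\U^a\D^a$ or $\U^{a_1}\D\U^{a_2}\D^{a_1+a_2-1}$ (equivalently, $\sigma$ has at most two peaks and, if two, its first descent has length one). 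Counting by $s = |\sigma|$ gives $1 + \sum_{s=2}^{n+1}(s-1) = \binom{n+1}{2}+1$, and by reversal the dual characterization for $\av{n}{\tv{}}$ gives the same count. The main obstacle will be verifying the converse direction of this characterization---that any path not of this form can indeed be reduced to $\tor{}$---which requires locating, in each excluded path, a sub-configuration that reduces to $\U^2\D^2\U\D$; a direct combinatorial bijection $\av{n}{\tg{}} \to \av{n}{\tor{}}$ would sidestep this case analysis entirely if one can be produced.
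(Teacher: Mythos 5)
Your treatment of $\te{}$, $\tg{}$, $\tv{}$ and $\tor{}$ follows essentially the paper's route (structural characterization of each avoidance class, then a direct count; your descriptions of $\av{n}{\tg{}}$ and $\av{n}{\tor{}}$ are reparametrizations of the ones in the paper's Lemma on size-2 patterns and give the same sets). But there is a genuine gap in part (i): you never actually determine $\av{n}{\tf{}}$. The zeta map by itself cannot ``restrict to a bijection $\av{n}{\te{}}\to\av{n}{\tf{}}$'' -- what it does is carry paths of height at most $2$ to paths whose \emph{bounce path} has at most two return points, so to use it you would still need the combinatorial characterization that $T$ avoids $\tf{}$ if and only if its bounce path has at most two return points. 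That characterization is precisely the missing idea (it is the content of the paper's Lemma~\ref{lem:avtableaux2}(i)), and once you have it the count is immediate and does not even need the zeta map: if the first return point of the bounce path is $(k,k)$ the path is determined by a free lattice path in a $k\times(n+1-k)$ rectangle, giving $\sum_{k=1}^{n+1}\binom{n}{k-1}=2^n$. Your own remark that ``not every irreducible path avoids $\tf{}$ once $n\geq 4$'' is exactly the symptom that the irreducible decomposition is the wrong invariant here; the bounce path is the right one. As written, the claim $|\av{n}{\tf{}}|=2^n$ is unproved in your proposal.

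A smaller point: your argument that bounce deletions do not increase height rests on the premise that $\de{i}{k}$ always removes $\U_i$ together with a \emph{strictly later} $\D_k$. This fails for $\de{i}{i-1}$ whenever $\D_{i-1}$ precedes $\U_i$ (e.g.\ $\de{2}{1}$ applied to $\U\D\U\D$). The conclusion is still true -- in that configuration the steps strictly between $\D_{i-1}$ and $\U_i$ are forced to be a block of $\U$-steps, and deleting the pair shifts the intermediate heights up by one while removing the old maximum of that segment, so the overall height does not grow -- but you should supply that case rather than assume it away. The remaining converse directions you defer (reducing any path of height $\geq 3$ to $\te{}$, and reducing any path outside your $\tor{}$-class to $\tor{}$) are left to the reader in the paper as well, so I would not count those as gaps beyond what the paper itself tolerates.
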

To prove Proposition \ref{prop:avtableaux2} we need the following lemma which 
provides us with precise characterizations of pattern avoidance  
for each of the five tableau of size 2. 
\begin{lemma}
	\begin{enumerate}[(i)]
		\item  $T\in\shiT{n}$ avoids
		$\tf{}$ if and only if its bounce path $b(T)$
		has at most two return points.
		\item  $T\in\shiT{n}$ avoids  $\te{}$ if and only if its height is at most 2. 
		\item  $T\in\shiT{n}$ avoids $\tg{}$
		if and only if it has at most one peak at height $\geq{2}$. 
		\item   $T\in\shiT{n}$ avoids $\tv{}$ if and only if
		is  $T=\U^{n+1}\D^{n+1}$ or   $T=\U^{r}\D\cdots\D(\U\D)^{n-r}$ with  $1\leq{r}\leq{n}$. 
		Since $T\in \shiT{n}$, the subword between $\D\cdots\D$ is a permutation  of $r-1$ $\D$'s and a 
		single $\U$.
		\item[(iv\,$'$)] $T\in\shiT{n}$ avoids $\tor{}$ if and only if
		is  $T=\U^{n+1}\D^{n+1}$ or   $T=(\U\D)^{n-r}\U\cdots \U\D^r$ with  $1\leq{r}\leq{n}$. 
		Since $T\in \shiT{n}$, 
		the subword between $\U\cdots\U$ is a permutation  of $r-1$ $\U$'s and a 
		single $\D$.
	\end{enumerate}
	\label{lem:avtableaux2}
\end{lemma}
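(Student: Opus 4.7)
The lemma consists of five independent characterizations and I will treat each separately. In every case the forward direction ``$T$ contains the pattern $\Rightarrow$ $T$ satisfies the structural condition'' is handled by showing that the relevant invariant---number of returns of $b(T)$ for (i), maximum height for (ii), number of peaks at height at least two for (iii), and the comparative positions of low and high peaks for (iv) and (iv')---cannot be increased by passing to an upper cover; equivalently, it is nonincreasing under a single bounce deletion $\de{i}{i}$ or $\de{i}{i-1}$. Since each of the five patterns realizes its associated invariant at the smallest value still excluded by the structural property (three returns, height three, two peaks of height $\geq 2$, and a low-then-high peak configuration respectively), containment automatically forces the invariant to be large enough. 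These forward implications are short verifications carried out directly on the effect of a bounce deletion on the Dyck word.

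For the reverse direction of (i), if $b(T)$ has at least three returns then the irreducible decomposition of Section~\ref{sec:ulbounds} yields $T=T_1\oplus\cdots\oplus T_k$ with $k\geq 3$ irreducible components; within each component I iterate bounce deletions (applying, component by component, the collapse implicit in Theorem~\ref{theor:UCclosedfor}) to reach a single $\U\D$, producing $\tf{}=(\U\D)^3$. Parts (ii) and (iii) follow the same template: a peak of height at least three in (ii) (respectively, two peaks of height at least two in (iii)) selects six particular $\U$- and $\D$-steps of $T$ which must be kept, while every other $\U/\D$ pair is eliminated via an appropriate bounce deletion that does not disturb the chosen six. The key local fact needed is that for any $\U$-step we wish to remove there is a compatible $\D$-step (at positional distance $0$ or $1$ in the SYT of the current path) whose simultaneous removal is a legal bounce deletion and preserves the target witness.

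Cases (iv) and (iv') require a finer analysis. For (iv), I will first verify directly that both $\U^{n+1}\D^{n+1}$ and any tableau of the form $\U^r\D\cdots\D(\U\D)^{n-r}$ with the prescribed mix avoid $\tv{}$: in the first case no interleaved $\U\D\U\U\D\D$ can arise inside a single mountain because all $\U$-steps precede all $\D$-steps, and in the second case the unique extra $\U$ available inside the descent block is not enough to build both the $\U\U$ and the subsequent $\D\D$ of $\tv{}$ after the low peak $\U\D$. Conversely, if $T$ fits neither template, then $T$ has either a peak of height $1$ occurring strictly before a peak of height $\geq 2$, or the descent joining its initial ascent to its trailing segment contains at least two $\U$-steps; in both situations I will exhibit an explicit sequence of bounce deletions, modelled on the extraction $\U\U\U\D\U\D\U\D\D\D\U\D\leadsto\U\D\U\U\D\D$, that produces $\tv{}$. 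Statement (iv') then follows from (iv) by the involution reversing the Dyck path and swapping $\U\leftrightarrow\D$, which exchanges $\tv{}$ with $\tor{}$ and commutes with bounce deletions.

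The main obstacle is the reverse implication in (iv), because bounce deletions are strictly more restrictive than unrestricted pattern extraction: a candidate subword witnessing containment must be realized by a legal sequence of bounce deletions, and many seemingly valid extractions are blocked by the constraint that each step removes a $\U$-column and a $\D$-column at positions differing by at most one in the current SYT. My plan is to use the $2\times(n+1)$ SYT description of bounce deletions to identify, in any non-avoider, an admissible sequence of column removals whose output is the SYT of $\tv{}$; organizing this argument around the first return of $T$ to the diagonal and the shape of the descent block should keep the bookkeeping manageable.
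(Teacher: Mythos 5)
Your plan for parts (ii) and (iii) is sound and in fact cleaner than the paper's closure arguments: the height and the number of peaks at height at least $2$ really are non-increasing under a single bounce deletion (the key observation being that if the deleted $\D_{k_i}$ precedes the deleted $\U_i$ then they must be adjacent and form a valley at height $0$, so no surviving point gains height), and your treatment of (iv) and (iv$'$) essentially reproduces the paper's direct verification and symmetry argument. The containment directions you leave at the same level of sketch as the paper does, so I will not press on those.

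There is, however, a genuine error in your argument for part (i): you identify the return points of the \emph{bounce path} $b(T)$ with the returns of $T$ itself to the main diagonal, i.e.\ with the number of irreducible components. These are different statistics. For example, $\pa=\U\D\U\U\D\U\D\U\D\D$ (the paper's own illustration of the bounce path) has $b(\pa)=\U\D\U\U\D\D\U\U\D\D$ with three return points but only two irreducible components; worse, $\pa=\U\U\D\U\D\U\D\U\D\D$ is \emph{irreducible} (it touches the diagonal only at its endpoints) yet its bounce path has three return points, so this $\pa$ must contain $\tf{}$ while your proposed construction --- collapsing each of ``at least three'' components to a single $\U\D$ --- has nothing to collapse. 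The correct construction, which the paper uses for the general statement, is to apply $\de{i}{i}$ to every row $i$ that carries no step of the bounce path, which contracts $T$ onto its bounce skeleton $(\U\D)^{m}$ with $m\geq 3$ return points. Relatedly, the avoidance direction of (i) in your scheme rests on the claim that a bounce deletion cannot increase the number of return points of $b(T)$; unlike the monotonicity of height or of high peaks, this is not a ``short verification carried out directly on the effect of a bounce deletion on the Dyck word,'' precisely because the bounce path is a global construction, and you should either prove it or replace it by the paper's direct counting argument (with at most two return points, reaching $\tf{}$ would force the deletion of $k-1$ of the first $k$ rows, leaving at most one non-empty column).
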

\begin{proof}
	For (i), notice that if the bounce path of $T$ 
	has at most two return points, say $(k,k)$ with $1\leq{k}\leq{n+1}$ and   
	$(n+1,n+1)$, then the columns $\col{k+1},\ldots,\col{n+1}$,
	as well as the rows $\row{1},\ldots,\row{k}$,
	are empty (see Figure \ref{fig:avoid_full2}).  Thus,
	if we want to arrive   at the   pattern $\tf{}$  we have to delete $k-1$ among the $k$ first 
	rows  of   $T$. This  eliminates $k-1$ among the first $k$ columns of 
	$T$, resulting in a tableau with at most  one non-empty column which, 
	clearly,  cannot 
	contain  the pattern $\tf{}$. 
	For the reverse, we leave  the reader to check that if $T$ has more than 
	two return points,    there exists an iteration of  bounce  deletions which leads 
	to   $\tf{}$. 
	
	For (ii), it is immediate to see that $T$ has height  at most 
	2  if  all boxes, except possibly the last in each row, are full. 
	To prove the claim in (ii), notice that if  $T$ has height more than 2, then
	it has a row ending with at least two empty boxes. 
	This forces the box below these two empty boxes to be empty as well, 
	therefore $T$ contains a subtableau of type  $\te{}$.
		Reversely, if no  row of $T$ has more than one empty 
	boxes (height 2) then no iteration of bounce deletions can
	increase the number of empty boxes in a row. 
	 Hence, $T$  contain no tableau 	with height greater than 2 and, as a result,
	 $T$ does not contain the pattern  $\te{}=\U^3\D^3$.
	
	For (iii) notice that if 
	$T$ has at most one peak at height $\geq{2}$, then 
	it can be written as   $T=(\U\D)^{a_1}\U^{a_2}\D^{a_2}(\U\D)^{a_3}$
	with $a_1+a_2+a_3=n+1$ (see Figure \ref{fig:avoidtg}(a)). 
	Arguing as in (ii), iteration of any bounce deletions  leads to 
	a   tableau of a similar   form, i.e., 
	$(\U\D)^{a'_1}\U^{a'_2}\D^{a'_2}(\U\D)^{a'_3}$
	with $a'_i\leq{a_i}$. 
	This implies that we can never obtain the pattern $\tg{}=\U^2\D\U\D^2$. 
		For the reverse, we prove  that if $T$ has more 
	than  one peaks at height $\geq{2}$, then $T$ contains the pattern $\tg{}$.
Indeed, consider two peaks of $T$ at height 2 and delete all columns 
preceding the first and all rows above the second. Clearly, we will be left 
with a Dyck path of type $\U^2\D\,w\,\U\D^2$ with $\#D(w)=\#U(w)=k$ for some  $k$. 
Applying the bounce deletions $\de{i}{i-1}$  for $i=3,\ldots,k+2$
we arrive at $\U^2\D\U\D^2$, i.e., at the pattern $\tg{}$.

	The claim in (iv) is obvious for $T=\U^{n+1}\D^{n+1}$. 
	We next show that if 
	$T=\U^{r}\D\cdots\D(\U\D)^{n-r}$ as described in the statement
	then it avoids the pattern $\tv{}=\U\D\U^2\D^2$ (see Figure~\ref{sfig:avoidtvk}(a)).
	Indeed, in order to obtain the  pattern $\U\D\U^2\D^2$
	we have to delete at least $r-1$ among the first $r$ $\U$-steps.    This  
	deletes  at least $r-1$ among the first $r$ columns of $T$, 
	resulting to one of the following tableaux: 
	$\U^2\D^2(\U\D)^{n-r}$, 
	$\U\D\U\D(\U\D)^{n-r}$ or $\U\D(\U\D)^{n-r}$, none of which  contains the pattern $\tv{}=\U\D\U^2\D^2$.
		For the reverse,  we need  to prove that if $T$ begins with $\U^r$
		and there exists  $r+2\leq{i}\leq n+1$ such that 
	the $i$-th row of $T$ has an empty box, then  there is a sequence  of bounce deletions leading to $\U\D\U^2\D^2$. Indeed, it is not difficult to see that 
	 there is a way to delete all rows indexed with $r+1\leq j\leq n+1$ except from 
	 the $i$-th row. 
	 Then, we will be left with a Dyck path of type $\U^r\D w \U\D^2$ 
	 where $w$ is a permutation of $r-1$ $\D$'s  and a singe $\U$. 
	 We leave the reader to verify that the latter contains the pattern $\U\D\U^2\D^2$.

	The statement in  (iv\,$'$) is completely analogous by symmetry.
\end{proof}
\begin{proof}[Proof of Proposition \ref{prop:avtableaux2}]
	For (i) we need to enumerate the tableaux  described in 
	Lemma \ref{lem:avtableaux2}(i),(ii). 
	We first  count the tableaux having at most two return points. 
	Since the second return point is always the endpoint $(n+1,n+1)$, 
	let us assume that the first return point of the bounce path is $(k,k)$
	with $1\leq{k\leq{n+1}}$. 
	To count the  tableau whose bounce path has first return point $(k,k)$ 
	it suffices to count the  paths within the rectangle defined by 
	$(1,k),(k,k),(k,n),(1,n)$ (see Figure~\ref{fig:avoid_full2}a). 
	The latter are   $\binom{n}{k-1}$, from which we deduce that 
	$|\av{n}{\tf{}}|=\sum_{k=1}^{n+1}\binom{n}{k-1}=2^n$.
	Next, notice that the tableaux with height at most   2 are just 
	those all whose boxes are full,  except possibly the last box of each row. 
	This immediately implies that $|\av{n}{\tf{}}|=2^n$.
	
\begin{figure}[h]
	%  \captionsetup[subfigure]{position=b}
	\begin{subfigure}{0.42\textwidth}
		\label{sfig:avtf}
		\begin{center}
			\begin{tikzpicture}[scale=0.45]
				\fill[rounded corners=1, color=black, line width=1,opacity=0.1]
				(0, 3)--(0,8)--(3,8)--(3,3);
				(0,0)--(0,3)--(3,3)--(3,8)--(8,8);
				\draw[dotted] (0, 0) grid (0, 1);
				\draw[dotted] (0, 1) grid (1, 2);
				\draw[dotted] (0, 2) grid (2, 3);
				\draw[dotted] (0, 3) grid (3, 4);
				\draw[dotted] (0, 4) grid (4, 5);
				\draw[dotted] (0, 5) grid (5, 6);
				\draw[dotted] (0, 6) grid (6,7);
				\draw[dotted] (0, 7) grid (7,8);
				\draw[dotted] (0, 7) grid (7,8);
				\draw[dotted] (7,8) -- (8,8);
				\draw[black] (3,3) circle (3pt);
				\draw[black] (8,8) circle (3pt);
				%\fill (3,3) circle (1.5pt);
				\fill (0,0) circle (1.5pt);
				%\fill (8,8) circle (1.5pt);
				\node at (0,-0.3) {\scalebox{0.6}{ $(0,0)$}};
				\node at (3.8,3) {\scalebox{0.6}{$(k,k)$}};
				\node at (9.8,8) {\scalebox{0.6}{ $(n+1,n+1)$}};
				\node at (-1,1.5) {\scalebox{0.9}{$\U^k\D$}};
				\node at (6,8.5)  {\scalebox{0.9}{$\D^k$}};
				\draw[rounded corners=1, color=black, line width=1,opacity=0.8]
				(0,0)--(0,3)--(1,3);
				\draw[rounded corners=1, color=black, line width=1,opacity=0.8]
				(3,8)--(8,8);
				\draw[color=black,rounded corners=1.3,opacity=0.2,line width=0.7pt]
				(1,3)--(2,3)--(2,5)--(3,5)--(3,5)--(3,8);
				
			\end{tikzpicture}
		\end{center}
		\subcaption*{(a) \, Size 2: a tableau   avoids $\tf{}$ iff its bounce path has 
			at most 
			two return points.}
	\end{subfigure}
	\hfill
	\begin{subfigure}{0.45\textwidth}
		\begin{center}
			\begin{tikzpicture}[scale=0.35]
				\foreach \i in {0,1,2,3,4,5,6,7,8,9,10,11}{
					\draw[dotted](0,\i)grid (\i,\i+1);}
				\draw[dotted] (11,12) grid (12,12);
				\fill[color=black,line width=1pt,opacity=0.1,rounded corners=0.5]
				(0,0)--(0,3)--(3,3)--(3,5)--(5,5)--(5,8)--(8,8)--
				(8,10)--(10,10)--(10,12)--(12,12)--(0,12);
				%\fill[color=black,line width=1pt,opacity=0.1,rounded corners=0.5]
				%(1,3)--(3,3)--(3,5)--(1,5);

				\draw[rounded corners=1,color=black,line width=1,opacity=0.8](0,0)--(0,3)--(1,3);
				\draw[rounded corners=1,color=black,line width=1,opacity=0.8] (3,5)--(4,5);
				\draw[rounded corners=1,color=black,line width=1,opacity=0.8] (5,8)--(6,8);
				\draw[rounded corners=1,color=black,line width=1,opacity=0.8] (8,10)--(9,10);
				\draw[rounded corners=1,color=black,line width=1,opacity=0.8] (10,12)--(12,12);
				
				\draw[black] (3,3) circle (3pt);
				\draw[black] (5,5) circle (3pt);
				\draw[black] (8,8) circle (3pt);
				\draw[black] (10,10) circle (3pt);
				\draw[black] (12,12) circle (3pt);
				% et: I need this stupid trick in order to align the subcaptions!
				\node at (6,13.5){\color{white}.};
				%\draw[color=black,rounded corners=1.3,opacity=0.2,line width=0.7pt]
				%(1,3)--(2,3)--(2,5)--(3,5)--(4,5)--(4,7)--(5,7)--(5,8)--(8,8)--(8,10)--(9,10)
				%--(9,12)--(10,12);
			\end{tikzpicture}
		\end{center}
		\subcaption*{(b)\, Generalization: a tableau  avoids $\tf{5}$ iff its bounce 
			path has at most 5 return points.}
		\label{sfig:avoidtek}
	\end{subfigure}
	\caption{}
	%\caption{}
	\label{fig:avoid_full2}
\end{figure}

%\hspace{0.6cm}

	For (ii), we need to enumerate the tableaux  described in 
	Lemma \ref{lem:avtableaux2}(iii),(iv). 
	Recall from the proof of Lemma \ref{lem:avtableaux2}(iii) that 
	if a tableau $T$ has at most one peak at height $\geq{2}$, then 
	it can be written as   $T=(\U\D)^{a_1}\U^{a_2}\D^{a_2}(\U\D)^{a_3}$
	with $a_1+a_2+a_3=n+1$ and $a_2\geq{2}$ (see Figure \ref{fig:avoidtg}(a)). 
	Thus, we have  to count the  non-negative integer solutions of 
	$a_1+a_2+a_3=n+1$ with $a_2\geq{2}$. These are $\binom{n+1}{2}$. Including also the tableau $(\U\D)^{n+1}$, 
		all   whose   peaks are at height 1, we  conclude that  
	$|\av{n}{\tg{}}|=\binom{n+1}{2}+1$. 
	Finally,  in view of the characterisation in Lemma \ref{lem:avtableaux2}(iv), 
	for each $1\leq{r}\leq n$   we  count the words 
	with $r-1$ $\D$'s and a single $\U$. 
	Including  $T=\U^{n+1}\D^{n+1}$,  we have  that $|\av{n}{\tv{}}|=1+\sum_{r=1}^n r=1+\binom{n+1}{2}$. 
\end{proof}
\medspace

\begin{figure}[h]
	%  [scale=0.8]
	\begin{subfigure}[t!]{0.45\textwidth}
		\begin{center}
			\begin{tikzpicture}[scale=0.44]
				\draw[rounded corners=1, color=black, line width=1]
				(0,0)--(0,1)--(1,1)--(1,2)--(2,2)--(2,6)--(6,6)--(6,7)--(7,7)--(7,8)--(8,8);
				\draw[dotted] (0, 0) grid (0, 1);
				\draw[dotted] (0, 1) grid (1, 2);
				\draw[dotted] (0, 2) grid (2, 3);
				\draw[dotted] (0, 3) grid (3, 4);
				\draw[dotted] (0, 4) grid (4, 5);
				\draw[dotted] (0, 5) grid (5, 6);
				\draw[dotted] (0, 6) grid (6,7);
				\draw[dotted] (0, 7) grid (7,8);
				\draw[dotted] (0, 7) grid (7,8);
				\draw[dotted] (7,8) -- (8,8);
				\fill (0,0) circle (1.5pt);  \fill (8,8) circle (1.5pt);
				\foreach \i in {1,2,3,4,5,6,7}  {\draw[color=black,fill=black, opacity=0.7]
					(0.5,\i+0.5) circle (4pt);}
				\foreach \i in {1,2,3,4,5,6}
				{\draw[color=black,fill=black, opacity=0.7] (1.5,\i+1.5)circle (4pt);}
				\foreach \i in {2,3,4,5,6}
				{\draw[color=black,fill=black, opacity=0.7] (\i+0.5,7.5)circle (4pt);}
				\foreach \i in {2,3,4,5}
				{\draw[color=black,fill=black, opacity=0.7] (\i+0.5,6.5)circle (4pt);}
				\draw[color=black, opacity=0.7] (2.5,3.5) circle (4pt);
				\draw[color=black, opacity=0.7] (2.5,4.5) circle (4pt);
				\draw[color=black, opacity=0.7] (3.5,4.5) circle (4pt);
				\draw[color=black, opacity=0.7] (2.5,5.5) circle (4pt);
				\draw[color=black, opacity=0.7] (3.5,5.5) circle (4pt);
				\draw[color=black, opacity=0.7] (4.5,5.5) circle (4pt);
				\node at (-1.6,-0.2){\rotatebox{45}{\color{black}\fs \em height 1
						$\longrightarrow$}};
				\node at (-1.7,2.4){\rotatebox{45}{\color{black}\fs \em height 4
						$\longrightarrow$}};
			\end{tikzpicture}
		\end{center}
		\subcaption*{(a)\, Size 2: a tableau  avoids $\tg{}$ iff it has at most one 
			peak at 
			height   $\geq{2}$.  }
		\label{sfig:avoidtg}
	\end{subfigure}\hfill
	\begin{subfigure}[t!]{0.5\textwidth}
		\begin{center}
			\begin{tikzpicture}[scale=0.47]
				\foreach \i in {0,1,2,3,4,5,6,7}{
					\draw[dotted](0,\i)grid (\i,\i+1);}
				\draw[dotted] (7,8) grid (8,8);
				\draw[rounded corners=1, color=black, line width=1]
				(0,0)--(0,5)--(3,5)--(3,6)--(5,6)--(5,8)--(8,8);
				\draw[color=black!40](0,3)--(5,8);
				\draw[rounded corners=1, color=black!50,dashed, line width=1,opacity=1]
				(0,3)--(0,4)--(1,4)--(1,5)--(2,5);
				\node at (-1.6,1.3){\rotatebox{45}{\color{black!50}\fs \em height 3
						$\longrightarrow$}};
				\node at (0,-1.2){\color{white}.};
				\draw[color=black, opacity=0.7] (0.5,4.5) circle (4pt);
				\draw[color=black, fill=black, opacity=0.7] (0.5,5.5) circle (4pt);
				\draw[color=black, fill=black, opacity=0.7] (1.5,5.5) circle (4pt);
				\draw[color=black, fill=black, opacity=0.7] (2.5,5.5) circle (4pt);
				\draw[color=black, fill=black, opacity=0.7] (0.5,6.5) circle (4pt);
				\draw[color=black, fill=black, opacity=0.7] (1.5,6.5) circle (4pt);
				\draw[color=black, fill=black, opacity=0.7] (2.5,6.5) circle (4pt);
				\draw[color=black, fill=black, opacity=0.7] (3.5,6.5) circle (4pt);
				\draw[color=black, fill=black, opacity=0.7] (4.5,6.5) circle (4pt);
				\draw[color=black, fill=black, opacity=0.7] (0.5,7.5) circle (4pt);
				\draw[color=black, fill=black, opacity=0.7] (1.5,7.5) circle (4pt);
				\draw[color=black, fill=black, opacity=0.7] (2.5,7.5) circle (4pt);
				\draw[color=black, fill=black, opacity=0.7] (3.5,7.5) circle (4pt);
				\draw[color=black, fill=black, opacity=0.7] (4.5,7.5) circle (4pt);
				
				\draw[color=black, opacity=0.7] (0.5,1.5) circle (4pt);
				\draw[color=black, opacity=0.7] (0.5,2.5) circle (4pt);
				\draw[color=black, opacity=0.7] (1.5,2.5) circle (4pt);
				\draw[color=black, opacity=0.7] (0.5,3.5) circle (4pt);
				\draw[color=black, opacity=0.7] (1.5,3.5) circle (4pt);
				\draw[color=black, opacity=0.7] (2.5,3.5) circle (4pt);
				\draw[color=black, opacity=0.7] (1.5,4.5) circle (4pt);
				\draw[color=black, opacity=0.7] (2.5,4.5) circle (4pt);
				\draw[color=black, opacity=0.7] (3.5,4.5) circle (4pt);
				\draw[color=black, opacity=0.7] (3.5,5.5) circle (4pt);
				\draw[color=black, opacity=0.7] (4.5,5.5) circle (4pt);
				\draw[color=black, opacity=0.7] (5.5,6.5) circle (4pt);
				\draw[color=black, opacity=0.7] (5.5,7.5) circle (4pt);
				\draw[color=black, opacity=0.7] (6.5,7.5) circle (4pt);
				
			\end{tikzpicture}
		\end{center}
		\subcaption*{(b)\, Generalization: a tableau avoids $\tg{4}$ iff it has no 
			valley 
			at height      $\geq{3}$. }
		%      The bijection of Proposition~\ref{prop:avtableauxk}~(i) sends the
		%      part of $T$ over the  line $y=x+3$ to the dashed part. }
		\label{sfig:2peaks}
	\end{subfigure}
	\caption{}
	\label{fig:avoidtg}
\end{figure}
\begin{figure}[h]
	\begin{subfigure}[t]{0.45\textwidth}
		\begin{center}
			\begin{tikzpicture}[scale=0.5]
				\foreach \i in {0,1,2,3,4,5,6,7}{\draw[dotted](0,\i)grid (\i,\i+1);}
				\draw[dotted] (7,8) grid (8,8);
				\foreach \i in {0,1,2,3}{\draw[color=black,line width=0.7pt,opacity=0.8]
					(\i+0.5,4.5) circle (4pt);}
				\foreach \i in {0,1,2}{\draw[color=black,line width=0.7pt,opacity=0.8]
					(\i+0.5,3.5) circle (4pt);}
				\foreach \i in {0,1}{\draw[color=black,line width=0.7pt,opacity=0.8]
					(\i+0.5,2.5) circle (4pt);}
				\draw[color=black,line width=0.7pt,opacity=0.8]
				(0.5,1.5) circle (4pt);
				\foreach \i in {5,6,7}{\draw[color=black,fill=black,opacity=0.8]
					(0.5,\i+.5) circle (4pt);}
				\foreach \i in {1,2,3,4,5,6}{\draw[color=black,fill=black, opacity=0.8]
					(\i+.5,7.5) circle (4pt);}
				\foreach \i in {1,2,3,4,5}{\draw[color=black,fill=black, opacity=0.8]
					(\i+.5,6.5) circle (4pt);}
				\foreach \i in {1,2,3,4}{\draw[color=black,fill=black, opacity=0.3]
					(\i+.5,5.5) circle (4pt);}
				\node at (-1,0.5){\s{ \em row 1}};
				\node at (-1,1.5){\s{ \em row 2}};
				\node at (-1,3){ $\vdots$};
				\node at (-1,4.5){\s{ \em row r}};
				\draw[rounded corners=1, color=black, line width=1,opacity=0.8]
				(0,0)--(0,5)--(1,5);
				\draw[rounded corners=1, color=black, line width=1,opacity=0.8]
				(5,6)--(6,6)--(6,7)--(7,7)--(7,8)--(8,8);
				% \node at (0,-2){.};
			\end{tikzpicture}
		\end{center}
		\subcaption*{(a)\, Size 2: Tableaux avoiding $\tv{}$. Gray bullets
			can be either full  or empty boxes. }
		\label{sfig:avoidtv}
	\end{subfigure}\hfill
	\begin{subfigure}[t]{0.5\textwidth}
		\begin{center}
			\begin{tikzpicture}[scale=0.45]
				\draw [color=black,decorate,decoration={brace,amplitude=6pt}](0,4) -- (0,9);
				\node at (-3.4,6.5)[text width=2cm]{\scalebox{0.75}{ \color{black} 
						$non$-$empty$
						$rows$}};
				\foreach \i in {0,1,2,3,4,5,6,7,8}{\draw[dotted](0,\i)grid (\i,\i+1);}
				\draw[dotted] (8,9) grid (9,9);
				\draw[color=black,fill=black,line width=.45cm,opacity=0.12](4,8.5)--(8,8.5);
				\draw[color=black,fill=black,line width=.45cm,opacity=0.12](4,7.5)--(7,7.5);
				\draw[color=black,fill=black,line width=.45cm,opacity=0.12](4,6.5)--(6,6.5);
				\draw[color=black,fill=black,line width=.45cm,opacity=0.12](4,5.5)--(5,5.5);
				
				%--(4,4)--(4,5)--(5,5)--(5,6)--(6,6)--(6,7)--(7,7)--(7,8)--(8,8)--(8,9)--(9,9);
				
				\foreach \i in {4,5,6,7,8}{\draw[color=black,fill=black,opacity=0.8]
					(0.5,\i+.5) circle (4pt);}
				\foreach \i in {1,2,3,4,5,6,7}{\draw[color=black,fill=black, opacity=0.3]
					(\i+.5,8.5) circle (4pt);}
				\foreach \i in {1,2,3,4,5,6}{\draw[color=black,fill=black, opacity=0.3]
					(\i+.5,7.5) circle (4pt);}
				\foreach \i in {1,2,3,4,5}{\draw[color=black,fill=black, opacity=0.3]
					(\i+.5,6.5) circle (4pt);}
				\foreach \i in {1,2,3,4}{\draw[color=black,fill=black, opacity=0.3]
					(\i+.5,5.5) circle (4pt);}
				\foreach \i in {1,2,3}{\draw[color=black,fill=black, opacity=0.3]
					(\i+.5,4.5) circle (4pt);}
				\foreach \i in {0,1,2}{\draw[color=black,opacity=1](\i+.5,3.5)circle(4pt);}
				\foreach \i in {0,1}{\draw[color=black,opacity=1](\i+.5,2.5)circle(4pt);}
				\foreach \i in {0}{\draw[color=black,opacity=1](\i+.5,1.5)circle(4pt);}
				\draw[rounded corners=1, color=black, line width=1,opacity=0.8]
				(0,0)--(0,4)--(1,4);
				%     \node at (-1.2,7.5){{\s{ \em row n+1}}};
				\node at (4.8,10.1){\rotatebox{65}{{\fs{\color{gray} $\col{n+2-\ell}$}}}};
				\node at (5.8,10.1){\rotatebox{65}{{\fs{\color{gray} $\col{n+3-\ell}$}}}};
				%    \node at (1.7,10.2){\rotatebox{65}{{\s{ \em column 2}}}};
				\node at (7,10.2){\color{gray}$\cdots$};
			\end{tikzpicture}
		\end{center}
		\subcaption*{(b)\, Generalization: 
			If $T$ has $\ell$ non-empty rows then,  $T$ avoids $\tv{k}$ iff the 
			tableau in  colums $\col{n+2-\ell},\ldots,\col{n+1}$ (shaded)
			avoids  $\te{k-1}.$  Gray bullets
			can be either full  or empty boxes.}
		%    \label{sfig:avoidtvk}
	\end{subfigure}
	\caption{}
	\label{sfig:avoidtvk}
\end{figure}

\subsection{Generalizations}
%The ultimate goal when studying patterns is {\em pattern recognition}, i.e., 
% answering if a certain pattern occurs within another. 
%Pattern recognition in Shi tableaux   turns out to be 
%quite hard. 
In this subsection  we generalize  the  five cases in \eqref{size2} to  the following Shi 
tableaux of size $k\geq{3}$:
\begin{align}
\label{sizek}
\te{k}=\U^{k+1}\D^{k+1},\, \tg{k}=\U^k\D\U\D^k,\, \tor{k}=\U^k\D^k\U\D, \,
\tv{k}=\U\D\U^k\D^k \text{ and }\, \tf{k}=(\U\D)^{k+1}. 
\end{align}
It turns out  that for  fixed $k\geq 3$  the tableaux in \eqref{sizek}
fall into two Wilf-equivalence classes.
%This is the content of the next proposition. 
\begin{proposition}
	\label{prop:avtableauxk}
	For every $3\leq k\leq n$ we have
	\begin{enumerate}[(i)]
		%    \addtolength{\itemindent}{5cm}
		%    \addtolength{\itemsep}{1pt}
		\item $|\av{n}{\te{k}}|=|\av{n}{\tf{k}}|=|\av{n}{\tg{k}}|=|\tnk{n+1}{k}|$, 
		%    \item $|\av{n}{\tv{k}}|=|\av{n}{\tor{k}}|=
		%    \sum\limits_{r=0}^{k-1}\frac{n-r+1}{n+1}\binom{n+r}{r}+\mathcal{F}(n,n,k-1)+
		%    \sum\limits_{r=k}^{n-1}\sum\limits_{h=0}^{k-1}\binom{n-r+h-1}{h}\mathcal{F}(r-h,r,k-1),$
		%\item[]
		\item $|\av{n}{\tv{k}}|=|\av{n}{\tor{k}}|=
		\sum\limits_{\ell=0}^{k-1}\frac{n-\ell+1}{n+1}\binom{n+\ell}{\ell}+|\mathcal{F}(n,n,k-1)|+
		\sum\limits_{\ell=k}^{n-1}\sum\limits_{h=0}^{k-1}\binom{n-\ell+h-1}{h}|\mathcal{F}(\ell-h,\ell,k-1)|,$
	\end{enumerate} 
	\medskip 
	where $|\tnk{n}{k}|$ is the number  of Dyck paths in $\dP{n}$ with 
	height at most $k$ and  $|\mathcal{F}(m,n,k)|$  is the number of lattice 
	paths  { from $(0,0)$ to 
		$(n,m)$ with steps $(1,0)$ and $(0,1)$ such that all points $(x,y)$ 
		visited by the path satisfy $x\leq y\leq x+ k$.}
\end{proposition}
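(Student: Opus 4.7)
The plan is to handle parts (i) and (ii) separately. In each case the strategy is first to extend the structural characterizations of Lemma~\ref{lem:avtableaux2} to the general $k$, and then to enumerate the characterized family directly.

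For part (i), I expect the following characterizations of $T \in \shiT{n}$, each proved by induction on the length of a chain of bounce deletions: $T$ avoids $\te{k}$ iff its Dyck path has height at most $k$; $T$ avoids $\tf{k}$ iff the bounce path of $T$ has at most $k$ return points; and $T$ avoids $\tg{k}$ iff $T$ has no valley at height $\geq k-1$. The forward direction in each case follows from the fact that a single bounce deletion weakly decreases the relevant invariant (height, number of bounce returns, maximum valley height), which I would verify by a short case analysis according to whether the deleted pair $\U_i, \D_{k_i}$ straddles the local feature. The reverse direction is constructive: starting from a tableau violating the bound, one peels off bounce deletions until the forbidden pattern is exposed. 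Granting these characterizations, $|\av{n}{\te{k}}| = |\tnk{n+1}{k}|$ is immediate, and the other two equalities reduce to the classical equi-enumeration of Dyck paths by height versus by number of bounce returns (typically realized by the zeta or sweep map) and by the valley-height bound (obtained here by a direct concatenation argument decomposing an avoider as a tall block embedded inside a height-bounded zig-zag, and matching with height-$k$ paths).

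For part (ii), the equality $|\av{n}{\tv{k}}| = |\av{n}{\tor{k}}|$ follows from the reflective symmetry of staircase Shi tableaux that swaps $\U \leftrightarrow \D$ and reverses the path, conjugating $\tv{k}$ with $\tor{k}$. It therefore suffices to enumerate $\av{n}{\tv{k}}$. The key characterization, generalizing Lemma~\ref{lem:avtableaux2}(iv) and suggested by the accompanying figure, is: if $T$ has $\ell$ non-empty rows, then $T \in \av{n}{\tv{k}}$ iff the sub-tableau in columns $\col{n+2-\ell}, \ldots, \col{n+1}$ has height at most $k-1$, i.e., avoids $\te{k-1}$. I would prove this by observing that a copy of $\tv{k}$ in $T$ forces a low $\U\D$ below the top-right $\ell \times \ell$ region together with a copy of $\te{k-1}$ inside it, and conversely that these two ingredients always allow one to extract $\tv{k}$. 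The enumeration then stratifies by $\ell$: for $0 \leq \ell \leq k-1$ the height-at-most-$(k-1)$ condition on the sub-tableau is vacuous, so one just counts tableaux with exactly $\ell$ non-empty rows, giving the ballot number $\tfrac{n-\ell+1}{n+1}\binom{n+\ell}{\ell}$; for $\ell = n$ every row is non-empty and the count is $|\mathcal{F}(n,n,k-1)|$; for $k \leq \ell \leq n-1$ one chooses how many additional empty rows $h$ lie above the lowest non-empty row (contributing $\binom{n-\ell+h-1}{h}$ by a standard interleaving count) and independently the shape of the top sub-tableau subject to the height-$(k-1)$ constraint (contributing $|\mathcal{F}(\ell-h,\ell,k-1)|$), then sums over $h$.

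The main obstacle lies in part (ii): first, proving the sub-tableau characterization rigorously requires careful control over how bounce deletions interact with the active region, since the number of non-empty rows can itself change under a deletion and a naive induction breaks down; second, justifying that the paths counted by $|\mathcal{F}(m,\ell,k-1)|$ precisely encode the $\te{k-1}$-avoiding sub-tableaux in the relevant sub-shape, which amounts to translating the height-$(k-1)$ condition on the sub-tableau of columns $\col{n+2-\ell}, \ldots, \col{n+1}$ into the strip condition $x \leq y \leq x + (k-1)$ on lattice paths. In part (i) the technical hurdle is considerably milder, reducing to bookkeeping around the extremal Dyck paths $\U^{n+1}\D^{n+1}$ and $(\U\D)^{n+1}$ already encountered in Lemma~\ref{lem:avtableaux2}.
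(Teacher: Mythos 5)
Your proposal is correct and follows essentially the same route as the paper: the same characterization lemmas (height at most $k$ for $\te{k}$, at most $k$ bounce returns for $\tf{k}$, no valley at height $\geq k-1$ for $\tg{k}$, and the sub-tableau/$\te{k-1}$ criterion for $\tv{k}$), the zeta map and the peak-flattening bijection for part (i), and the identical stratification by the number $\ell$ of non-empty rows and the parameter $h$ for part (ii). The only cosmetic difference is that you prove the characterizations by induction on the deletion chain via monotonicity of the invariants, whereas the paper inducts on $k$ for $\tf{k}$ and argues directly for the others; the enumerative content is the same.
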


\begin{remark}
	The numbers $|\mathcal{F}(m,n,k)|$ can be computed efficiently using the 
	following formula,
	which is a consequence of the iterated reflection principle \cite[Chap.~1.3, Thm.~2]{Mohanty}.
	%  \begin{align*}
	%    F(m,n,k)
	%    &=\sum_{i=0}^{\infty}\tfrac{n-m+2i(k+2)+1}{n+i(k+2)+1}\tbinom{m+n}{m-i(k+2)}\\
	%    &\qquad+\sum_{i=1}^{\infty}\tfrac{n-m-2i(k+2)+1}{m+i(k+2)}\tbinom{m+n}{m+i(k+2)}
	%  \end{align*}
	$$ |{\mathcal F}(m,n,k)|
	=\sum_{i=0}^{\infty}\tfrac{n-m+2i(k+2)+1}{n+i(k+2)+1}\tbinom{m+n}{m-i(k+2)}
	+\sum_{i=1}^{\infty}\tfrac{n-m-2i(k+2)+1}{m+i(k+2)}\tbinom{m+n}{m+i(k+2)}.$$
	% $$
	% F(m,n,k)
	% =\sum_{i\in\mathbb{Z}}\tfrac{n-m+2i(k+2)+1}{n+i(k+2)+1}\tbinom{m+n}{m-i(k+2)}
	% $$
\end{remark}
To prove  Proposition~\ref{prop:avtableauxk}  we need the following lemma.  
\begin{lemma}
	\label{lem:avk}
	Let $k\geq 3$ and $T\in \shiT{n}$. Then
	\begin{enumerate}[(i)]
		\item $T$  avoids   $\tf{k}$ if and only if  its bounce path 
		has at most $k$ return points,
		\item $T$  avoids $\te{k}$ if and only if its height is at most  $k$,
		\item $T$ avoids $\tg{k}$ if and only if it has no valley at 
		height     $\geq{k-1}$.
		\item   Let  $\ell$ denote the number of the 
		non-empty rows of $T$. 
		Then $T\in\av{n}{\tv{k}}$ if and only if the tableau $T'\in\shiT{\ell-1}$ 
		obtained    by deleting the first $n+1-\ell$ columns of $T$ avoids $\te{k-1}.$
		% \et{    \item   Let  $r$ denote the number of   empty rows of $T$. 
		%    Then $T\in\av{n}{\tv{k}}$ if and only if the tableau $T'\in\shiT{n-r}$ 
		%    obtained    by deleting the first $r$ columns of $T$ avoids $\te{k-1}.$}
	\end{enumerate} 
\end{lemma}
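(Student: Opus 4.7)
My plan is to handle parts (i)--(iii) via a common template and part (iv) by a separate structural argument. For each of (i)--(iii) I will associate with $T$ a numerical statistic --- the number of return points of the bounce path, the height of $\pa$, or the maximal height of a valley of $\pa$, respectively --- show that this statistic is non-increasing under every bounce deletion, and observe that the forbidden pattern has a statistic exceeding the stated threshold. The converse direction in each case will be constructive: exhibit a sequence of bounce deletions that realizes the forbidden pattern.

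For (i), the key observation is that the number of return points of the bounce path of $\pa$ equals the number $\ell$ of peaks in the ascent-descent decomposition $\pa=\U^{a_1}\D^{b_1}\cdots\U^{a_\ell}\D^{b_\ell}$. A bounce deletion removes one $\U$-step and one $\D$-step, and a short case analysis on whether the affected $\U$-run and $\D$-run have length $1$ or greater shows that $\ell$ can decrease by $0$, $1$, or $2$, but never increase. Since $\tf{k}$ has $k+1$ peaks, any $T$ with at most $k$ peaks avoids it. Conversely, given $\ell\geq k+1$, I first apply bounce deletions internal to long ascents and long descents (these preserve $\ell$) to reduce $T$ to $(\U\D)^\ell$, then collapse surplus peaks by $\de{i}{i}$-deletions to reach $\tf{k}=(\U\D)^{k+1}$.

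Parts (ii) and (iii) will follow the same template. In (ii) the statistic is the height of $\pa$; since a bounce deletion removes exactly one $\U$ and one $\D$, no prefix sum $\nup-\ndown$ can increase, hence the height is non-increasing, while $\te{k}$ has height $k+1$. Conversely, from a peak of height $\geq k+1$ one extracts $\te{k}=\U^{k+1}\D^{k+1}$ by iterated bounce deletions that strip away all other steps. In (iii) the statistic is the height of the tallest valley; an analogous prefix-sum argument gives non-increase, and $\tg{k}$ has a valley at height $k-1$. A valley of height $\geq k-1$ in $\pa$ is flanked by a peak of height $\geq k$ on either side, and iterated bounce deletions extract $\U^k\D\U\D^k=\tg{k}$.

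For (iv) I use the $\bullet$-monotonicity of Shi tableaux, which forces the empty rows of $T$ to form a bottom prefix, so that $\pa=(\U\D)^{n+1-\ell}\cdot\pa''$, where $\pa''$ is the Dyck path of $T'\in\shiT{\ell-1}$. For the ``if'' direction, given a reduction $T'\to\te{k-1}$, I lift it to a reduction $T\to(\U\D)^{n+1-\ell}\cdot\te{k-1}$ by applying the same bounce deletions to the suffix $\pa''$ inside $\pa$ (with shifted indices), and then collapse the prefix to a single $\U\D$ via $\de{i}{i}$-deletions to reach $\tv{k}=\U\D\cdot\te{k-1}$. For the ``only if'' direction, given a reduction of $T$ to $\tv{k}=\U\D\cdot\U^k\D^k$, the high peak $\U^k\D^k$ has height $k\geq 2$ and so cannot be assembled from the height-$1$ peaks of the prefix; therefore the bounce deletions producing it must act entirely within $\pa''$, and restricting the reduction to $\pa''$ yields the desired reduction $T'\to\te{k-1}$. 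The main obstacle will be this separation step: I will need to show that bounce deletions of $T$ mixing a step from the prefix with a step from $\pa''$ (such as $\de{i}{k_i}$ with $\U_i$ in one part and $\D_{k_i}$ in the other) can either be avoided or re-ordered so the reduction respects the prefix/$\pa''$ split, which requires careful index-tracking in $\de{i}{k_i}$ relative to the junction of the two parts.
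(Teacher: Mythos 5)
Your treatments of parts (ii) and (iii) are workable and close in spirit to the paper's (a statistic that cannot increase under bounce deletions, plus an explicit extraction for the converse), but parts (i) and (iv) each rest on a false structural claim. In (i), the number of return points of the bounce path is \emph{not} the number $\ell$ of peaks of $\pa=\U^{a_1}\D^{b_1}\cdots\U^{a_\ell}\D^{b_\ell}$: for $\pa=\U\D\U\U\D\U\D\U\D\D$ one has $\ell=4$, while $b(\pa)=\U\D\U\U\D\D\U\U\D\D$ has only $3$ return points; checking all nine bounce deletions of this $\pa$ shows none yields $(\U\D)^4$, so $\pa$ avoids $\tf{3}$ even though it has four peaks. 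Your converse step also fails for the same reason: a path with $\ell$ peaks cannot in general be reduced to $(\U\D)^{\ell}$, because a bounce deletion that removes a $\U$ from an ascent is only allowed to remove a $\D$ within the bounce-determined range, not an arbitrary $\D$ from the matching descent. The paper instead proves (i) by induction on $k$, using only that a single bounce deletion decreases the number of return points of the bounce path by at most one, and obtains the converse by applying $\de{i}{i}$ to every row not met by the bounce path.

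In (iv) the decomposition $\pa=(\U\D)^{n+1-\ell}\cdot\pa''$ is wrong. A row of $T$ is empty exactly when the path still hugs the left edge at that height, so the number of empty rows equals the length of the \emph{initial ascent}; hence $\pa=\U^{n+1-\ell}\D\cdots$ (a path beginning with $(\U\D)^2$ already has a nonempty second row). In particular $T'$ is not a suffix of $\pa$: deleting the first $n+1-\ell$ columns consolidates the surviving $\U$-steps scattered among the first $n+1-\ell$ descents into a new initial ascent (the paper writes $T=\U^{n+1-\ell}\D\pi\D\pi'$ and $T'=\U^{h}\pi'$ with $h$ the number of $\U$-steps of $\pi$). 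Your lifting of deletions to a suffix, and your separation argument based on ``height-one peaks of the prefix,'' therefore do not apply. The paper's route is shorter: any occurrence of $\tv{k}=\U\D\U^k\D^k$ in $T$ must delete $n-\ell$ of the first $n+1-\ell$ $\U$-steps so as to create a full first column, and every such intermediate tableau is $T'$ with a full column adjoined on its left, which reduces the question directly to whether $T'$ contains $\te{k-1}$.
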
 
\newcommand{\wud}{w_{\scalebox{0.6}{\U\D}}}
\newcommand{\wdu}{w_{\scalebox{0.6}{\D\U}}}
\begin{proof}
	To prove (i) we use induction on $k$, the base case $k=2$ being Lemma 
	\ref{lem:avtableaux2}(i).
	% 
	% To prove {\em (ii)} we use induction the case $k=2$ being Lemma
	% \ref{lem:av2}{\em(i)},
	Thus assume that the claim holds for $k$ and consider the case $k+1$.
	Suppose on the contrary that there exists some $T\in\shiT{n}$
	containing $\tf{k+1}$ and  whose bounce path  has  $k+1$  return points.
	Then $n>k+1$ and there there exist a non-empty sequence $\de{i_1}{*},\ldots,\de{i_\ell}{*}$ of bounce deletions such that $\de{i_1}{*}\cdots \de{i_\ell}{*}(T)=\tf{k+1}$.
	Applying one more bounce deletion we have $\de{i_0}{*}\, \de{i_1}{*}\cdots
	\de{i_\ell}{*}(T)=\tf{k}$.
	In particular the tableau $\de{i_\ell}{*}(T)$ contains $\tf{k}$.
	However, the deletion $\de{i_\ell}{*}$ can only possibly reduce the number of return points of the bounce path by one.
	Thus the bounce path of $\de{i_\ell}{*}(T)$ has at least $k$
	return points and, by the induction hypothesis, $\de{i_\ell}{*}(T)$
	avoids $\tf{k}$.
	This is a contradiction.
	
	For the other direction, we show that any $T$ whose bounce path has at least $k+1$
	return points must contain $\tf{k}$.
	Indeed, this can be done by  applying $\de{i}{i}$
	on each row $i>0$ that does not contain a bounce step (see Figure \ref{fig:avoid_full2}b).

	For (ii), we use the fact that  $T$ has height at most $k$ if and only if 
	no row has more than $k-1$ empty boxes. In view of this, it is immediate to see that if a row of $T$ has more than $k-1$ empty boxes, then 
	$T$ contains $\te{k}$.
	{ Reversely,  if all rows of $T$ have at most $k-2$ empty boxes,  
		then   no iteration of bounce  deletions can  lead  to a tableau having a row with 
		$k$ or more  empty boxes, and thus to $\te{k}$. }

	To prove (iii), we use the fact that  $T$ has no valley at 
	height $\geq{k-1}$
	if and only if    each initial subword  of the form $w\D\U$
	satisfies    $\#\U(w\D\U)-\#\D(w\D\U)\leq{k}$. Next, notice that each bounce deletion 
	either deletes a $\U,\D$  or a single $\U$(the corresponding $\D$ being on a subsequent position) from each such initial subword $w\D\U$. Thus, the above inequality   still holds after any bounce deletion, which futher implies that 
	$T$ cannot contain the pattern $\tg{k}$. Reversely, if $T$ has a valley at height $\geq{k-1}$
	then, deleting all rows and columns above and to the left of this valley we get a tableau  $\tg{\kappa}$ with $\kappa\geq{k}$ which, clearly,  contains $\tg{k}$.

	Finally,  notice that (iv) is trivially true for $\ell<k$
	since  both assumptions that  $T$ avoids ${\tv{k}}$ and $T'$ avoids ${\te{k-1}}$ are satisfied (by size restrictions). 
	If $\ell\geq{k}$,  assume first that $T'$ contains $\te{k-1}$. 
	Then, it is easy to see that there exist bounce deletions on $T$ leading to a Shi tableau $T''$ which is $T'$ with a full column adjoint on its left. 
	Since $T'$ contains $\te{k-1}$ we conclude that $T''$ contains $\tv{k}$ and consequently that $T$ contains $\tv{k}$. 
	Next, assume that  $\ell\geq{k}$  and $T'$ avoids $\te{k-1}$. 
	In order to answer the existence (or not) of the pattern  $\tv{k}=\U\D\U^k\D^k$
	in $T$ and since $T$ begins with $\U^{n-\ell+1}\D$, we have to delete $n-\ell$ among the first $n-\ell+1$  $\U$-steps of $T$
	in a way that the resulting tableaux $T''$ has a full first column,\,i.e., begins with
	$\U\D$. 
	This will delete $n-\ell$ among the first $n-\ell+1$ columns of $T$ so that 
	$T''$   is $T'$ with a full column adjoint on its left.
	Then, it is clear that since the pattern $\te{k-1}$ does not occur in $T'$, 
	the pattern $\tv{k}=\U\D\U^k\D^k$ does not occur in 
	$T''$ and hence in $T$.
	
	% in such a way that  we are left with $\U\D\pa'$ (there exists at least one such choice for 
	% that). Then, it is clear that the pattern $\tv{k}=\U\D\U^k\D^k$ occurs  
	% $\U\D\pa'$ if and only if the pattern $\te{k-1}=\U^k\D^k$ occurs in $\pa'$. 
	% 
	% 
	% 
	% In order to answer the existence (or not) of the pattern  $\tv{k}=\U\D\U^k\D^k$
	% in $T$ we have to delete $n-\ell$ among the first $n-\ell+1$ columns of $T$
	%in such a way that  we are left with $\U\D\pa'$ (there exists at least one such choice for 
	% that). Then, it is clear that the pattern $\tv{k}=\U\D\U^k\D^k$ occurs  
	% $\U\D\pa'$ if and only if the pattern $\te{k-1}=\U^k\D^k$ occurs in $\pa'$. 
\end{proof}

Now we are ready to prove Proposition~\ref{prop:avtableauxk}
\begin{proof}[Proof of Proposition~\ref{prop:avtableauxk}]
	(i) The fact that $\tf{k},\te{k}$ are Wilf-equivalent
	is a consequence of the well-known  {\em zeta map}
	\cite{akop-adnil-02,hag-dh-08}.
	Consider a Dyck path $\pa\in\Dp{n+1}$ with area vector 
	$a(\pa)=(a_1,a_2,\ldots,a_{n+1})$.
	That is, $a_i$ denotes the number of empty boxes in row $i$ of the corresponding Shi tableau.
	For each $j=0,\ldots,n$ we define $w_j$ as a word in the
	alphabet $\{\U,\D\}$ obtained as follows: reading $a(\pa)$ from left to right,
	we draw a down-step whenever we encounter an entry $a_i=j-1$
	and a up-step whenever we encounter an entry $a_i=j$.
	Finally, we set $\zeta(\pa)=w_0w_1\cdots{w}_n$.
	It is easy to see that $\zeta(\pa)\in\dP{n+1}$ since every entry of
	$a(\pa)$ contributes twice: an up-step first and a down-step later.
	% The bounce path and the zeta map appear in \cite{akop-adnil-02,arm-hadh-02,hag-ha-13,hag-dh-08}.
	%
	The map $\zeta:\dP{n+1}\rightarrow\dP{n+1}$ is a bijection with surprising 
	properties.
	% Let $b(\zeta(\pa))=\U^{i_1}\D^{i_1}\cdots\U^{i_{\ell}}\D^{i_{\ell}}$ be the bounce path of $\zeta(\pa)$, where $\ell,i_1,\ldots,i_{\ell}\geq{1}$.
	The following are equivalent:
	\begin{enumerate}[(a)]
		\item The area vector $a(\pa)$ satisfies $a_i\leq{k-1}$ for all $i$.
		\item The word $w_j$ is empty for all $j>k$.
		\item The bounce path of $\zeta(\pa)$ has at most $k$ return points.
	\end{enumerate}
	%  \begin{align}
	Notice that a Dyck path has height at most $k$
	if and only if all entries of its area vector are less than $k$.
	The equality $|\av{n}{\te{k}}|=|\av{n}{\tf{k}}|=|\tnk{n+1}{k}|$ therefore
	follows from the characterization of the Shi tableaux in $\av{n}{\te{k}}$
	and $\av{n}{\tf{k}}$ in Lemma~\ref{lem:avk}~(i) and~(ii) and
	the equivalence of (a) and (c) above.
	
	It remains to prove that $|\av{n}{\tg{k}}|=|\tnk{n+1}{k}|$.
	In view of Lemma~\ref{lem:avk}~(iii) suppose that $\pa$ is a Dyck path 
	with no valleys at height greater or equal to $k-1$. Then the steps of $\pa$ above the line 
	$y=x+k-1$   form a (possibly empty) set of disconnected peaks.
	%  (\et{See Figure \ref{heightbij} }).
	Replacing each such peak of the form $\U^\ell \D^\ell$ by a sequence
	$(\U\D)^\ell$ we obtain a Dyck path of height at most $k$.
	% (see Figure \ref{heightbij}(b)). 
	This yields a bijection between $\tg{k}$-avoiding and $\te{k}$-avoiding
	Shi tableaux.
	
	%\input{fig_avoid_tvk}

	%\begin{proof}[Proof of Proposition~\ref{prop:avtableauxk}~(ii)]
	To prove (ii), let $T\in\av{n}{\tv{k}}$ with $\ell$ non-empty rows.
	By Lemma~\ref{lem:avk}~(iv) the tableau $T$ falls in one of three categories:
	(a) $\ell<k$, (b) $\ell=n$ and the tableau $T'\in\shiT{n-1}$ obtained by deleting the 
	first column of $T$ avoids $\te{k-1}$, or (c) $k\leq \ell<n$ and the tableau 
	$T'\in\shiT{\ell-1}$ obtained by deleting the first $n+1-\ell$ columns of $T$ avoids 
	$\te{k-1}$.
	The set of Shi tableaux that satisfy $\ell<k$ corresponds naturally
	to ballot paths with $n$ $\U$-steps and $\ell$ $\D$-steps, that is, 
	lattice paths  from $(0,0)$ to 
	$(\ell,n)$ with steps $(1,0)$ and $(0,1)$ that never go below the line $x=y$. 
	Such ballot paths are known to be counted by $\frac{n-\ell+1}{n+\ell+1}\binom{n+\ell+1}{\ell}=\frac{n-\ell+1}{n+1}\binom{n+\ell}{\ell}$ (see \cite[Relation (10.10)]{Krat-LPE}).
	The second category contains $|\tnk{n}{k-1}|=|{\mathcal F}(n,n,k-1)|$ elements by 
	part (i) of the proposition.
	It remains to count the tableaux that fall into the third category.
	To this end write $T=\U^{n+1-\ell}\D \pa \D \pa'$, where $\pa$ contains $n-\ell-1$ 
	$\D$-steps, and let $h$ denote the number of $\U$-steps of $\pa$.
	Thus the tableau $T'$ corresponds to the path $\U^h\pa'$.
	In order for $T'$ to avoid $\te{k-1}$ we must have $h\leq k-1$ and $\pa'$ must correspond to a ballot path from $(0,0)$ to $(\ell-h,\ell)$ of height at most $k-1$.
	There are $|\mathcal F(\ell-h,\ell,k-1)|$ such ballot paths.
	The fact that $T'$ avoids $\te{k-1}$ imposes no restriction on the path $\pa$.
	Thus there are $\binom{n-\ell+h-1}{h}$ possible choices for $\pa$.
\end{proof}

\begin{remark}
	The zeta map and the bounce path of a Dyck path were first defined by Andrews 
	et al.~in \cite{akop-adnil-02} in order to enumerate $ad$-nilpotent ideals in 
	a borel subalgebra of the complex simple Lie algebra of type $A$ with class 
	of nilpotence less than $k$.
	These ideals are in bijection with certain Dyck paths 
	(\cite[Thm.~4.1]{akop-adnil-02}), which correspond precisely to the Shi 
	tableaux that avoid $\tf{k}$.
	The zeta map provides a bijection to Dyck paths of height at most $k$ (see 
	\cite[\textsection5]{akop-adnil-02}), which of course coincide with 
	$\te{k}$-avoiding tableaux.
	Thus our use of the zeta map in the proof of \ref{prop:avtableauxk}~(i) 
	really agrees with its original purpose.
	The combinatorial description of $ad$-nilpotent ideals with bounded class of nilpotence in terms of pattern avoidance is, in the opinion of the authors, quite intriguing.
	
	We further remark that the zeta map also ties to different problems in 
	algebraic combinatorics such as the Hilbert series of diagonal 
	harmonics~\cite{hag-ha-13,hag-dh-08} 	 and  the $q,t$-Catalan Positivity Conjecture   \cite{ha-ga02}.
	The zeta map and the bounce path have also previously been considered in 
	conjuction with the Shi arrangement, for example, in \cite{arm-hadh-02}.
\end{remark}

  \section{Open Problems}
  \label{sec:problems}
  
  Once a notion of pattern avoidance for a class of combinatorial objects has been defined there are plenty of interesting questions which have been studied for permutation patterns and can be transferred to the new setting.
  We mention here only {three} such (potentially difficult) problems and conclude the paper with two more possible further directions that are more particular to the case of Shi tableaux.
  
  \begin{problem}
  	Does the poset $\cT$ of Shi tableaux contain infinite antichains?
  \end{problem}
  
  The analogous question can be answered affirmatively for permutations ordered by pattern containment~\cite{SpielmanBona}.
  On the other hand, the poset of finite words over a finite alphabet is an example of a poset with no infinite antichains~\cite{Higman}.
  \medskip

  Another interesting direction is to study the following questions. 
  \begin{problem}
  	Let $T\in\cT$ be a Shi tableau (or a collection of Shi tableaux).
  	What can be said about the generating function $\sum_{n\in\NN}\av{n}{T} x^n$?
  	Are all formal power series obtained in this way rational, algebraic, D-finite,$\dots$?
  \end{problem}
  
  We have no strong intuition as to what the correct answer should be.
  Pattern avoidance in permutations gives rise to (conjecturally) very complicated generating functions~\cite{Albert-etal}.
  However, there is at least a chance that the poset of Shi tableaux is sufficiently simpler such some results can be obtained.  
  \medskip

  	Finally, since $\cT$ is a (graded) infinite poset, in order to further  study  its structure one should focus on the enumerative combinatorics of its intervals
  	(or, at least, certain well-behaved intervals).
  	This is the content of the 	work in \cite{bernini2019enumerative}
  	where the authors  study  {\em their}  Dyck pattern poset. 
  	Motivated by this, we pose  analogous questions for our Dyck  pattern poset.
  	    \begin{problem}
 Is it possible to say something  about the enumerative combinatorics or the M\"{o}bius 
 function of some particular intervals in the poset of Shi tableaux? 
  \end{problem}

  % \section{Connections to other areas}
  \subsection{Connections to pattern-avoiding permutations}
  Considering our first enumerative results on pattern avoidance in Shi tableaux, we observe that the sequences in Proposition~\ref{prop:avtableaux2} and Proposition~\ref{prop:avtableauxk}~(i) have already appeared in the literature in the context of permutation-patterns. The following table summarizes these results.
  
  \bigskip
  \begin{center}
  	\begin{tabular}{|l|c|l|l|}
  		\hline
  		Shi tableaux of type $A$ & Sequence & OEIS~\cite{oeis} & Pairs of permutations \\
  		\hline
  		$|\av{n}{\te{}}|,|\av{n}{\tf{}}|$ & $2^n$ & \oeis{A000079} & $|\av{n+1}{132,123}|$ \\
  		\hline
  		$|\av{n}{\tg{}}|,|\av{n}{\tv{}}|,|\av{n}{\tor{}}|$ & $\binom{n}{2}+n+1$ & \oeis{A000124} & $|\av{n+1}{132,321}|$\\
  		\hline
  		$|\av{n}{\te{k}}|,|\av{n}{\tf{k}}|,|\av{n}{\tg{k}}|$ &$|\tnk{n+1}{k}|$ &
  		\oeis{A080934} &
  		$|\av{n+1}{132, 12\dots k}|$\\
  		\hline
  	\end{tabular}
  \end{center}
  
  \bigskip
  Using Proposition~\ref{prop:avtableauxk}~(ii) we compute the first few values of $|\av{n}{\tv{k}}|$.
  For $k=2$ we find the sequence of $\{132,321\}$-avoiding permutations \cite[\oeis{A000124}]{oeis}.
  For $k=3$ the numbers seem to match the sequence of $\{123,3241\}$-avoiding permutations \cite[\oeis{A116702}]{oeis}.
  For $k=4$ we apparently obtain the sequence of $\{123,51432\}$-avoiding permutations \cite[\oeis{A116847}]{oeis}.
  %For $k\geq 5$ the obtained sequences do not seem to be part of the oeis yet.
  The first few values for $k=5$ and $k=6$ are as follows:
  \begin{align*}
  k&=5: &&
  1, 2, 5, 14, 42, 131, 413, 1294, 4007, 12272, 37277, 112622, 339152, 1019457,\dots\\
  k&=6: &&
  1, 2, 5, 14, 42, 132, 428, 1411, 4675, 15463, 50928, 166999, 545682, 1778631,\dots,
  \end{align*}
  and are not part of the oeis yet.
  The above data might lead to the guess that there should exist also patterns $\sigma$ and $\tau$ of lengths three and $k+1$ respectively such that $|\av{n}{\tv{k}}|$ equals the number of $\{\sigma,\tau\}$-avoiding permutations.
  
  \begin{problem}
  	Find an explanation for (or quantify) the above phenomenon linking pattern avoidance in Shi tableaux of type $A$ to permutations avoiding a pair of patterns.
  \end{problem}
  
  Considering the fact that Shi tableaux  are counted by Catalan numbers and are hence in bijection with $\sigma$-avoiding permutations for any pattern $\sigma$ of length three, one might hope to explain this enumerative parallel by finding a bijection that translates the bounce deletions into pattern containment on the permutation side.
  Unfortunately such an attempt is bound to fail:
  There is no choice of pattern $\sigma$ of length three for which the poset of Shi tableaux is isomorphic to the poset of $\sigma$-avoiding permutations.
  This can be seen quickly for example from the number of cover relations in Figure~\ref{fig:all_the_poset}.
  
  %
  %\dis{More precisely we get the following sequences for $k=2,3,4,5,6$.
  %\begin{align*}
  %k&=2 &&
  %1, 2, 4, 7, 11, 16, 22, 29, 37, 46, 56, 67, 79, 92, 106, 121, 137, 154, 172, 
  %191,\dots\\
  %k&=3 &&
  %1, 2, 5, 13, 32, 74, 163, 347, 722, 1480, 3005, 6065, 12196, 24470, 
  %49031,\dots\\
  %k&=4 &&
  %1, 2, 5, 14, 41, 119, 336, 924, 2492, 6636, 17536, 46137, 121095, 317434, 
  %831571,\dots\\
  %k&=5 &&
  %1, 2, 5, 14, 42, 131, 413, 1294, 4007, 12272, 37277, 112622, 339152, 
  %1019457,\dots\\
  %k&=6 &&
  %1, 2, 5, 14, 42, 132, 428, 1411, 4675, 15463, 50928, 166999, 545682, 
  %1778631,\dots
  %\end{align*}
  %}
  
  \subsection{\texorpdfstring{$ad$}{ad}-nilpotent ideals}
  
  A Shi arrangement can be attached to any crystallographic root system.
  In each case the dominant regions are indexed by Shi tableaux -- binary fillings whose shape is determined by the root poset.
  In types $B$ and $C$ this shape is a doubled staircase as opposed to the staircase shape in type $A$.
  In type $D$ the root poset is no longer planar but with some adjustments the combinatorics can still be made to work.
  
  \begin{problem}
  	Find and investigate the cover relations that define patterns in Shi tableaux for other classical types.
  \end{problem}
  
  There are two facts that should serve as guidelines as well as encouragement in this endeavor.
  First, the Shi arrangement is an exponential sequence of arrangements for the classical types.
  Hence there is geometric motivation for (at least some of) the bounce deletions.
  
  Secondly, there are enumerative results on $ad$-nilpotent ideals in a Borel subalgebra of the complex simple Lie algebra of classical type with bounded class of nilpotence~\cite{KrattenthalerOrsinaPapi}.
  In type $C$ the ideals of class of nilpotence $k$ are counted by formulas depending on the parity of $k$.
  At least in one of the two cases there seems to be an immediate connection between such ideals and the Shi tableaux avoiding a full Shi tableau.
  \medskip

\subsection*{Acknowledments}
We would  like to thank the anonymous referees for their
careful reading and  valuable comments. 
\subsection*{Data Availability Statement} No datasets were generated or analysed during the current study.

\bibliographystyle{plain}
\bibliography{bibliography}

\end{document}